\newtheorem{theorem}{Theorem}
\newtheorem*{theorem*}{Theorem}
\newtheorem{lemma}[theorem]{Lemma}
\newtheorem{corollary}[theorem]{Corollary}
\newtheorem{definition}[theorem]{Definition}
\newtheorem{remark}[theorem]{Remark}
\newtheorem{example}[theorem]{Example}
\newcommand{\R}{{\mathbb R}}
\newcommand{\Z}{{\mathbb Z}}
\renewcommand{\int}{\rm Int}
\newcommand{\E}{\mathbb E}
\newcommand{\PP}{\Bbb {P}}
\newcommand{\p}{{\mathfrak p}}
\newcommand{\var}{{\rm {Var}}}
\newcommand{\cov}{{\rm {Cov}}}
\begin{document}
\title{Large random simplicial complexes, II;\\ the fundamental group.}        
\author{A. Costa and M. Farber}        
\date{August 13, 2015}          
\maketitle

\section{Introduction} 

This paper develops the multi-parameter model of random simplicial complexes initiated in \cite{CF14} and \cite{CF15a}.

One of the main motivations to study random simplicial complexes comes from the theory of large networks. 
Traditionally one models networks by graphs with nodes representing objects and edges representing connections between the objects
\cite{Newman}.
However if we are interested not only in pairwise relations between the objects but also in relations between multiple objects we may 
use the high dimensional simplicial complexes
instead of graphs as mathematical models of networks. 

The mathematical theory of large random simplicial complexes is a new active research area, see \cite{CFK} and \cite{Ksurvey} for surveys.

The multi-parameter model which we discuss here 
allows regimes controlled by a combination of probability parameters associated to various dimensions. This model includes the well-known Linial - Meshulam - Wallach model \cite{LM}, \cite{MW} as an important special case; as another important special case it includes the random simplicial complexes arising as clique complexes of random Erd\H os--R\'enyi graphs, see \cite{Kahle1}, \cite{CF2}.

In the multi-parameter model one starts with a set of $n$ vertices and retains each of them with probability $p_0$; 
on the next step one connects every pair of retained vertices by an edge with probability $p_1$, and then fills in every triangle in the obtained random graph with probability $p_2$, and so on. 
As the result we obtain a random simplicial complex depending on the set of probability parameters 
$$(p_0, p_1, \dots, p_r), \quad 0\le p_i\le 1.$$
The topological and geometric properties of multi-parameter random simplicial complexes depend on the whole set of parameters 
and their thresholds can be understood as subsets of the space of multi-parameter space and not as single numbers as in all the previously studied models. 

In our recent paper \cite{CF15a} 
we described the conditions under which a multi-parameter random simplicial complex is connected and simply connected. 
In \cite{CF15} we showed that the Betti numbers of multi-parameter random simplicial complexes in one specific dimension dominate 
significantly the Betti numbers in all other dimensions. 
In this paper we focus mainly on the properties of fundamental groups of multi-parameter random simplicial complexes, 
which can be viewed as a new class of random groups. 
We describe thresholds for nontrivially and hyperbolicity (in the sense of Gromov) for these groups. 
Besides, we find domains in the multi-parameter space where
these groups have 2-torsion. We also prove that these groups have never odd-prime torsion and their geometric and cohomological dimensions are either 
$0$, $1$, $2$ or $\infty$. Another result presented in this paper states that aspherical 2-dimensional subcomplexes of random complexes satisfy the Whitehead Conjecture, i.e. all their subcomplexes are also aspherical (with probability tending to one).

\begin{figure}[h] 
   \centering
   \includegraphics[width=2.6in]{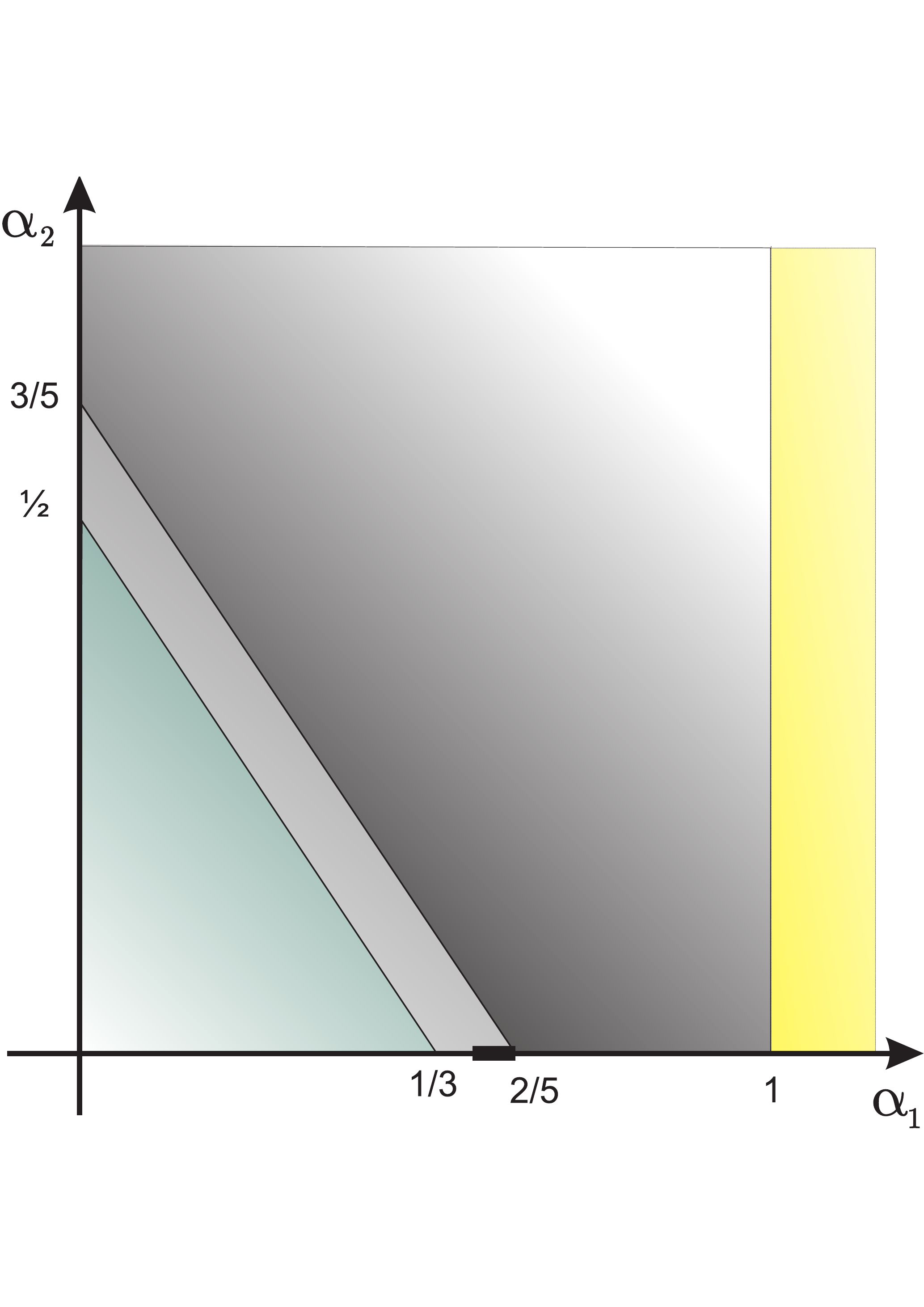} 
   \caption{Areas on the $(\alpha_1, \alpha_2)$-plane corresponding to various properties of the fundamental group: (a) light green - the group is trivial; (b) grey - the group has 2-torsion and is hyperbolic; (c) shaded black (including the horizontal interval $(11/30, 2/5)$ shown in bold) - the group is nontrivial, hyperbolic, its geometric dimension is $\le 2$; 
   (d) yellow - the group is trivial for any choice of the base point.}
   \label{fig:global}
\end{figure}

To make this paper less dependent on \cite{CF15a} we give now a brief description of the multi-parameter model. 
Fix an integer $r\ge 0$ and a sequence $${\mathfrak p}=(p_0, p_1, \dots, p_r)$$ of real numbers satisfying $$0\le  p_i\le 1.$$ Denote  
$q_i=1-p_i.$
We consider the probability space ${\Omega_n^r}$ consisting of all subcomplexes 
$Y\subset \Delta_n$ with $\dim Y\le r,$
where the symbol $\Delta_n^{(r)}$ stands for the $r$-dimensional skeleton of $\Delta_n$, which is defined as the union of all simplexes of dimension $\leq r$. 
The probability function
\begin{eqnarray*}
\PP_{r,\p}: {\Omega_n^r}\to \R
\end{eqnarray*} is given by the formula
\begin{eqnarray}\label{def111}
\PP_{r, \p}(Y) \,  
&=& \, \prod_{i=0}^r p_i^{f_i(Y)}\cdot \prod_{i=0}^r q_i^{e_i(Y)}
\end{eqnarray}
Here $e_i(Y)$ denotes the number of $i$-dimensional external faces of $Y$, see \cite{CF15a} for the definition. 
We use the convention $0^0=1$; in other words, if $p_i=0$ and $f_i(Y)=0$ then the corresponding factor in (\ref{def111}) equals 1.

Some results presented in the paper are illustrated by Figure \ref{fig:global}. There for simplicity we assume that the probability parameters $p_i$ have the form $$p_i=n^{-\alpha_i}$$ where $\alpha_i\ge 0$ are constant. We also assume that $\alpha_0=0$ and hence various properties of the fundamental group are 
described by subsets of the $(\alpha_1, \alpha_2)$-plane since they clearly depend only on $p_1$ and $p_2$. Figure \ref{fig:global} shows domains corresponding to triviality and non-triviality, existence and non-existence of 2-torsion as well as the domain showing when the geometric and cohomological dimension of the fundamental group are $\le 2$.

In area (a) the complex is connected and simply connected, as proven in  \cite{CF15a}. In area (d) the random complex is disconnected and has no cycles and no 2-simplexes, i.e. it is a forest.  
Of course, all these properties are satisfied asymptotically almost surely (a.a.s.), i.e. the limit of the probability that the corresponding property holds tends to 1 as $n\to \infty$.

This research was supported by an EPSRC grant.

\section{Subcomplexes of random complexes}

Let $S$ be a fixed simplicial complex of dimension $\le r$. Similarly to the random graph theory, one may wish to determine conditions when $S$ is embeddable into a random 
simplicial complex $Y\in \Omega_n^r$ with respect to the probability measure $\PP_{r, \p}$, where $\p=(p_0, p_1, \dots, p_r)$. 
In a recent paper \cite{CF14} we discussed the containment problem under a simplifying assumption that $p_i=n^{-\alpha_i}$ where $\alpha_i\ge 0$ are constant. 
In this paper we shall need the containment result without this assumption. Besides the general containment result stated as Theorem \ref{thm1} we also treat in this section the relative containment problem, see Theorem \ref{thm3}. 

\begin{theorem}\label{thm1} Consider a random 
simplicial complex $Y\in \Omega_n^r$ with respect to the measure $\PP_{r, \p}$, where $\p=(p_0, p_1, \dots, p_r)$. Let $S$ be a fixed finite simplicial complex of dimension $\le r$. 

{\rm \bf A}. Suppose that 
\begin{eqnarray}\label{assumption1}
n\cdot\,  \min_{T\subset S} \, \prod_{i=0}^r \, p_i^{\frac{f_i(T)}{f_0(T)}}\to 0.
\end{eqnarray}
Here $T\subset S$ runs over all simplicial subcomplexes of $S$ and $f_i(T)$ denotes the number of $i$-dimensional faces in $T$.  
Then the probability that a random complex $Y\in \Omega_n^r$ contains a simplicial subcomplex isomorphic to $S$ tends to zero as $n\to \infty$. 

{\rm \bf B}. Suppose that for any nonempty subcomplex $T\subset S$ one has
$$n^{f_0(T)}\cdot \prod_{i=0}^r p_i^{f_i(T)} \to \infty.$$
Then a random complex $Y\in \Omega_n^r$ contains a subcomplex isomorphic to $S$ with probability tending to 1. 
\end{theorem}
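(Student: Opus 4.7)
For part \textbf{A}, my plan is a straightforward first-moment argument. The assumption furnishes a nonempty subcomplex $T\subset S$ (one attaining the minimum) with $n\cdot \prod_{i=0}^r p_i^{f_i(T)/f_0(T)}\to 0$. Every copy of $S$ inside $Y$ contains an isomorphic copy of $T$, so it suffices to show that the expected number of copies of $T$ inside $Y$ tends to zero. Using the coin-flip description of the model, a specific embedding of $T$ lies in $Y$ precisely when each of its $f_i(T)$ faces of dimension $i$ receives a successful coin, so this expectation is bounded above by
$$c_T\cdot n^{f_0(T)}\prod_{i=0}^r p_i^{f_i(T)} \; = \; c_T\left(n\prod_{i=0}^r p_i^{f_i(T)/f_0(T)}\right)^{f_0(T)} \; \longrightarrow \; 0,$$
and Markov's inequality then completes part \textbf{A}.

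For part \textbf{B}, I would apply the second moment method to the random variable $X$ counting subcomplexes of $Y$ isomorphic to $S$. Taking $T=S$ in the hypothesis yields $\E[X]\sim c_S\cdot n^{f_0(S)}\prod_i p_i^{f_i(S)}\to\infty$. For the variance I expand
$$\E[X^2]=\sum_{A,B}\prod_{i=0}^r p_i^{f_i(A\cup B)}$$
over ordered pairs of copies $A,B\cong S$ in $\Delta_n$ and group the sum by the isomorphism type of the intersection $T=A\cap B$ (including the case $T=\emptyset$). The contribution from $T=\emptyset$ matches $\E[X]^2$ up to lower-order error. For each nonempty $T\subset S$, the number of pairs with intersection isomorphic to $T$ is of order $n^{2f_0(S)-f_0(T)}$, so the resulting contribution is of order $n^{2f_0(S)-f_0(T)}\prod_{i=0}^r p_i^{2f_i(S)-f_i(T)}$, whose ratio to $\E[X]^2$ equals
$$\frac{c_T}{n^{f_0(T)}\prod_{i=0}^r p_i^{f_i(T)}} \; \longrightarrow \; 0$$
by hypothesis. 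Summing over the finitely many nonempty $T\subset S$ gives $\var(X)/\E[X]^2\to 0$, so Chebyshev's inequality delivers $\PP(X\ge 1)\to 1$.

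The main obstacle is the combinatorial bookkeeping underlying the variance estimate: one must enumerate pairs $(A,B)$ of copies of $S$ in $\Delta_n$ whose intersection has a prescribed isomorphism type $T\subset S$, and verify cleanly that the contribution from $T=\emptyset$ really matches $\E[X]^2$ to leading order, while the remaining $T$'s contribute negligibly. A clean way to organize this is to choose $A$ first, then specify which subcomplex of $A$ plays the role of $T$, and finally complete $B$ by choosing the remaining $f_0(S)-f_0(T)$ vertices outside $V(A)$; the resulting combinatorial constants depend only on $S$ and are absorbed into the asymptotics. Nothing beyond elementary counting and the classical second moment inequality is needed.
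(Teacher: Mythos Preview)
Your approach for both parts is essentially identical to the paper's: a first-moment/Markov bound for Part~\textbf{A} and the second-moment method for Part~\textbf{B}, grouping pairs of copies by the isomorphism type of their intersection.

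There is one small gap in your Part~\textbf{A}. You write that the hypothesis ``furnishes a nonempty subcomplex $T\subset S$ (one attaining the minimum)'' with $n\prod_i p_i^{f_i(T)/f_0(T)}\to 0$. But the $p_i$ may depend on $n$, so the minimizing subcomplex can change with $n$; there need not exist a \emph{single fixed} $T$ for which this quantity tends to zero (think of two subcomplexes that alternate as minimizer along even and odd $n$). The fix, which the paper carries out explicitly, is to let $T_n$ be a minimizer for each $n$: then for any $\epsilon\in(0,1)$ and all large $n$ one has $n\prod_i p_i^{f_i(T_n)/f_0(T_n)}<\epsilon$, hence $n^{f_0(T_n)}\prod_i p_i^{f_i(T_n)}<\epsilon^{f_0(T_n)}\le\epsilon$, and Markov's inequality applied to $X_{T_n}$ finishes as you intended. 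Your Part~\textbf{B} matches the paper's argument essentially line by line.
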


\begin{proof}
Let us start with the statement {\rm \bf A}. 
For $T\subset S$ let $X_T:\Omega_n^r\to \R$ denote the random variable counting the number of embeddings of $T$ into a random simplicial complex $Y\in \Omega_n^r$. One has 
$$\E(X_T) = \binom n {f_0(T)} \cdot f_0(T)!\cdot \prod_{i=0}^r p_i^{f_i(T)}.$$
Denoting by $\PP_{r, \p}(Y\supset S)$ the probability that $Y$ contains a subcomplex isomorphic to $S$, we have 
 (for $n$ fixed)
\begin{eqnarray}
\PP_{r, \p}(Y\supset S) &\le& \min_{T\subset S}\PP_{r, \p}(Y\supset T)\le \min_{T\subset S} \E(X_T)\nonumber\\ 
& \le & \min_{T\subset S}n^{f_0(T)}\cdot \prod_{i=0}^r p_i^{f_i(T)}\label{esti}
.\end{eqnarray}
By (\ref{assumption1}) for any $\epsilon \in (0,1)$ there exists $N$ such that for any $n\ge N$ there exists $T_n\subset S$, $T\not=\emptyset$, such that 
$$n\cdot \prod_{i=0}^r p_i^{\frac{f_i(T_n)}{f_0(T_n)}} <\epsilon<1.$$
Then 
$$n^{f_0(T_n)}\cdot \prod_{i=0}^r p_i^{f_i(T_n)} <\epsilon^{f_0(T_n)}<\epsilon$$
which shows that the RHS of (\ref{esti}) tends to zero when $n\to \infty$ under our assumption (\ref{assumption1}).

We now prove statement {\rm \bf B}. We follow the argument used in the proof of Lemma 3.4 from \cite{CF14}. 
Let $F_0(S)$ denote the set of vertices of $S$ and let $J:F_0(S)\to [n]$ be an embedding. Consider the indicator random variable 
$X_J:\Omega^{r}_n\to \{0,1\}$ taking the value $1$ on a subcomplex $Y\in \Omega_n^r$ iff $J$ extends to a simplicial embedding $J:S\to Y$. 
Then $X=\sum_{J}X_J$ counts the number of copies of $S$ in $Y$. We wish to show that $X>0$ a.a.s.

We have
$$\mathbb{E}(X)= (1+o(1))\cdot n^{f_0(S)}\cdot \prod_{i=0}^r p_i^{f_i(S)}\, \, \to\infty$$ 
By Chebyshev inequality,
$$\mathbb{P}(X=0)\, \leq\,  \frac{\var(X)}{\mathbb{E}(X)^2}.$$
Our goal is to show that $\frac{\var(X)}{\mathbb{E}(X)^2}\to 0$.
One has 
$$\var(X)=\sum_{J,J'} \cov(X_J,X_{J'})=\sum_{J,J'} (\mathbb{E}(X_JX_{J'})-\mathbb{E}(X_{J})\mathbb{E}(X_{J'})).$$
The product $X_J\cdot X_{J'}$ is the indicator random variable for the containment of the simplicial complex $J(S)\cup J'(S)$.
Hence we have
$$\mathbb{E}(X_{J}X_{J'})=\prod_{i=0}^r p_i^{2f_i(S)-f_i(T')}$$
where $T'=J(S)\cap J'(S)$.

Denote by $T$ the subcomplex $T=J^{-1}(T')\subset S$. For a fixed subcomplex $T\subset S$ the number of pairs of embeddings $J,J':S\to \Delta_n$ such that $J^{-1}(J(S)\cap J'(S))=T$ is bounded above by 
$$C_Tn^{2f_0(S)-f_0(T)}$$
where $C_T$ denotes the number of isomorphic copies of $T$ in $S$.
We obtain
\begin{align*}
\var(X)&\leq \sum_{T\subset S} C_Tn^{2f_0(S)-f_0(T)}\cdot \prod_{i=0}^{r}p_i^{2f_i(S)-f_i(T)}\\
&\leq C\cdot \mathbb{E}^2(X)\cdot \sum_{T\subset S}C_T n^{-f_0(T)}\cdot \left(\prod_{i=0}^rp_i^{-f_i(T)}-1\right)\\
&=C'\cdot \mathbb{E}^2(X)\cdot \sum_{T\subset S, T\neq\emptyset} n^{-f_0(T)}\prod_{i=0}^rp_i^{-f_i(T)}\\
&=C'\cdot \mathbb{E}^2(X)\cdot \sum_{T\subset S, T\neq\emptyset} \left[n^{f_0(T)}\prod_{i=0}^rp_i^{f_i(T)}\right]^{-1}.
\end{align*}
It follows that
$$\frac{\var(X)}{\mathbb{E}(X)^2}\to 0$$ since we assume that $n^{f_0(T)}\prod_{i=0}^rp_i^{f_i(T)}\to\infty$ for all non-empty subcomplexes $T\subset S$.
\end{proof}

\begin{example}\label{circle} {\rm Let $C$ be a simplicial loop of length $m\ge 3$. Then for any non-empty subcomplex $T\subset C$
on a has $f_1(T) \le f_0(T)$ and therefore 
$$n^{f_0(T)}\prod_{i=0}^r p_i^{f_i(T)} \ge \left[np_0p_1\right]^{f_0(T)}.$$
From Theorem \ref{thm1} we obtain that a random complex $Y\in \Omega_n^r$ contains $C$ as a subcomplex assuming that 
$
np_0p_1\to \infty.
$
}
\end{example}

Next we shall consider a relative version of Theorem \ref{thm1}. 
Suppose that $S_1\supset S_2$ are two simplicial complexes; we want to describe conditions when a random simplicial complex 
$Y\in \Omega_n^r$ admits an embedding $S_2\to Y$ which cannot be extended to an embedding $S_1\to Y$. 

By the definition, an embedding of $S_1$ into $Y$ is an injective simplicial map $S_1\to Y$.

We denote 
$f_i(S_1, S_2) = f_i(S_1) - f_i(S_2)$ where $i=0, \dots$. 

\begin{theorem}\label{thm2}
Assume that for any nonempty subcomplex $T\subset S_2$ one has 
\begin{eqnarray}\label{inf}n^{f_0(T)}\cdot \prod_{i=0}^r p_i^{f_i(T)} \to \infty\end{eqnarray}
and besides, 
\begin{eqnarray}\label{zero}n^{f_0(S_1, S_2)}\cdot \prod_{i=0}^r p_i^{f_i(S_1, S_2)} \to 0.\end{eqnarray}
Then the number of embeddings of $S_1$ into a random simplicial complex $Y\in \Omega_n^r$ is smaller than the number of embeddings of $S_2$ 
into $Y$, a.a.s. In particular, under the assumptions (\ref{inf}), (\ref{zero}) there exists an embedding of $S_2 \to Y$ which does not extend to an embedding $S_1\to Y$, a.a.s.
\end{theorem}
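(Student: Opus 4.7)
The plan is to apply the second moment method to the number of embeddings of $S_2$ and Markov's inequality to the number of embeddings of $S_1$, and then compare the two counts. Let $X_j:\Omega_n^r\to \Z$ denote the random variable counting the number of simplicial embeddings of $S_j$ into $Y$, for $j=1,2$. Exactly as in the proof of Theorem \ref{thm1}B, one has
$$\mathbb{E}(X_j) \, =\, (1+o(1))\cdot n^{f_0(S_j)}\cdot \prod_{i=0}^r p_i^{f_i(S_j)},$$
so hypothesis (\ref{zero}) immediately gives $\mathbb{E}(X_1)/\mathbb{E}(X_2)\to 0$.

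First I would reuse the variance calculation from the proof of Theorem \ref{thm1}B, applied verbatim to $S_2$: under hypothesis (\ref{inf}) one obtains $\var(X_2)/\mathbb{E}(X_2)^2\to 0$, whence Chebyshev's inequality yields $X_2 \ge \tfrac12 \mathbb{E}(X_2)$ a.a.s. Next, Markov's inequality gives
$$\mathbb{P}\bigl(X_1 \ge \tfrac12 \mathbb{E}(X_2)\bigr) \, \le \, \frac{2\,\mathbb{E}(X_1)}{\mathbb{E}(X_2)} \, \longrightarrow \, 0.$$
Combining these two bounds one obtains $X_1<\tfrac12\mathbb{E}(X_2)\le X_2$ a.a.s., which is the first conclusion of the theorem. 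For the second conclusion, every simplicial embedding $J:S_1\to Y$ restricts to a simplicial embedding $J|_{S_2}:S_2\to Y$; this defines a map from the set of $S_1$-embeddings into the set of $S_2$-embeddings whose image is precisely the set of $S_2$-embeddings that extend to $S_1$. Since $X_1<X_2$ a.a.s., this restriction map cannot be surjective, so some $S_2$-embedding fails to extend.

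The only point that actually needs to be checked is that the variance estimate of Theorem \ref{thm1}B genuinely depends only on (\ref{inf}), with no further input from $S_1$; this is immediate from the bound $\var(X_2)/\mathbb{E}(X_2)^2\le C'\sum_{\emptyset\neq T\subset S_2}\bigl[n^{f_0(T)}\prod_i p_i^{f_i(T)}\bigr]^{-1}$ already derived there. Beyond this bookkeeping the argument is a routine combination of first and second moment methods, and I do not foresee any serious obstacle.
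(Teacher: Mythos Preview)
Your proof is correct and follows essentially the same approach as the paper: Markov's inequality controls $X_1$ from above, the second-moment estimate from Theorem~\ref{thm1}B controls $X_2$ from below, and the two are separated at a threshold lying between $\mathbb{E}(X_1)$ and $\mathbb{E}(X_2)$. The only difference is that the paper places the threshold at $\mathbb{E}(X_1)+\sqrt{\mathbb{E}(X_1)\mathbb{E}(X_2)}$ via auxiliary sequences $x_1,x_2$, whereas your choice of $\tfrac12\mathbb{E}(X_2)$ is simpler and equally effective.
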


\begin{proof}[Proof of Theorem 2]
For $i=1,2$, 
let $X_i: \Omega_n^r\to \mathbb{Z}$ be the random variable that counts the number of embeddings of $S_i$ into the random complex $Y\in\Omega_n^r$.

Our goal is to show that $X_1<X_2$, a.a.s.

 We have
$$\frac{\E(X_1)}{\E(X_2)} \leq C \frac{ n^{f_0(S_1)}\prod_{i=0}^{r}p_i^{f_i(S_1)}}{n^{f_0(S_2)}\prod_{i=0}^{r}p_i^{f_i(S_2)}}
 = C\cdot n^{f_0(S_1,S_2)}\prod_{i=0}^rp_i^{f_i(S_1,S_2)}\to 0$$
tends to zero because of our assumption (\ref{zero}). 

There are two sequences $x_1=x_1(n)$ and $x_2=x_2(n) >0$ such that
$$x_1+x_2=\E(X_2)-\E(X_1)$$ and $\E(X_1)/x_1\to 0$ while $\E(X_2)/x_2$ is bounded.
The sequences $$x_1=\sqrt{\E(X_1)\E(X_2)} \quad \mbox{and}\quad  x_2= \E(X_2) - \E(X_1) - x_1$$ satisfy the desired properties; 
indeed, we have
$$\frac{\E(X_1)}{x_1} = \sqrt{\frac{\E(X_1)}{\E(X_2)}} \to 0$$
and
$$\frac{\E(X_2)}{x_2} = \frac{1}{1-\frac{\E(X_1)}{\E(X_2)}- \sqrt{\frac{\E(X_1)}{\E(X_2)}}}\to 1$$
is bounded.

For every $n$ we have the inequality
\begin{eqnarray}\label{ineq3}
\PP_{r, n}(X_1<X_2) \ge 1- \PP_{r, n}(X_1\geq \E(X_1) +x_1) - \PP_{r, n}(X_2< \E(X_2) -x_2)\end{eqnarray}
since $$\E(X_1)+x_1 =\E(X_2)-x_2$$ and 
if $X_1\geq X_2$ then either $X_1\geq \E(X_1)+x_1$ or $X_2<\E(X_2)-x_2$.

By the Markov inequality we have
$$\PP_{r, n}(X_1\geq\E(X_1)+x_1)\leq \frac{\E(X_1)}{\E(X_1)+x_1}= \frac{\E(X_1)/x_1}{1+ \E(X_1)/x_1}\to 0.$$
By Chebyschev's inequality
$$\PP_{r, n}(X_2< \E(X_2)-x_2) < \frac{{\rm {Var(X_2)}}}{x_2^2}\to 0.$$
Indeed, we have already seen in the proof of Theorem \ref{thm1} that the assumption (\ref{inf}) implies that
$\frac{{\rm {Var(X_2)}}}{\E(X_2)^2}\to 0$ and hence $\frac{{\rm {Var(X_2)}}}{x_2^2}$ tends to zero. Incorporating this information into (\ref{ineq3}) we obtain
$\PP_{r, n}(X_1<X_2)\to 1$. 
\end{proof}

\begin{theorem}\label{thm3} Let 
\begin{eqnarray}S_j \supset S, \quad j=1, \dots, N
\end{eqnarray}
be a finite family of finite simplicial complexes of dimension $\le r$ containing a given simplicial complex $S$ and satisfying the following conditions:

(a) for every nonempty subcomplex $T\subset S$ one has 
\begin{eqnarray}\label{inf1}n^{f_0(T)}\prod_{i=0}^r p_i^{f_i(T)} \to \infty;\end{eqnarray}

(b) 
\begin{eqnarray}\label{zero1}n^{f_0(S_j, S)}\prod_{i=0}^r p_i^{f_i(S_j, S)} \to 0.
\end{eqnarray}
Then with probability tending to one, a random simplicial complex $Y\in \Omega_n^r$ admits an embedding of $S$ which 
does not extend to an embedding $S_j\to Y$, for every $j=1, \dots, N$.
\end{theorem}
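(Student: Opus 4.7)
My plan is to reduce Theorem \ref{thm3} to a comparison of counting random variables, extending the proof of Theorem \ref{thm2} to a union bound over the $N$ forbidden extensions. Let $X_0 : \Omega_n^r \to \Z$ count the embeddings $S \hookrightarrow Y$, and for each $j=1,\dots,N$ let $X_j$ count the embeddings $S_j \hookrightarrow Y$. Since restriction along $S \subset S_j$ sends every embedding $S_j \to Y$ to an embedding $S \to Y$, and distinct extensions of a fixed $J : S \to Y$ produce distinct embeddings of $S_j$, the number of embeddings $S \to Y$ that extend to at least one of the $S_j$ is bounded above by $\sum_{j=1}^N X_j$. It therefore suffices to establish
\[ X_0 \;>\; \sum_{j=1}^N X_j \qquad \text{a.a.s.} \]

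Two estimates drive the argument. First, assumption (b) together with the standard expectation formula gives, for each fixed $j$,
\[ \frac{\E(X_j)}{\E(X_0)} \;=\; (1+o(1))\cdot n^{f_0(S_j,S)}\prod_{i=0}^r p_i^{f_i(S_j,S)} \;\to\; 0, \]
so $\E(X_j)=o(\E(X_0))$ for each $j$. Second, assumption (a), combined with the second-moment computation already carried out in the proof of Theorem \ref{thm1}(B) (applied verbatim to $X_0$, since (a) controls precisely the subcomplexes of $S$), yields $\var(X_0)/\E(X_0)^2 \to 0$. In particular, taking $T=S$ in (a) gives $\E(X_0) \to \infty$.

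The conclusion is then a two-part tail bound. Chebyshev applied to $X_0$ at the threshold $\E(X_0)/2$ gives $\PP_{r,\p}(X_0 < \E(X_0)/2) \to 0$, and Markov applied to each $X_j$ at the threshold $\E(X_0)/(4N)$ gives $\PP_{r,\p}(X_j > \E(X_0)/(4N)) \le 4N\,\E(X_j)/\E(X_0) \to 0$. A union bound over the $N+1$ events shows that with probability tending to one $X_0 \ge \E(X_0)/2$ while $\sum_j X_j \le \E(X_0)/4 < X_0$, as required.

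The main subtlety — really the only nontrivial point — is that assumption (a) controls subcomplexes of $S$ but says nothing about proper subcomplexes of the larger $S_j$, so the variances of the $X_j$ are out of reach. This asymmetry forces the Chebyshev/Markov split above, with concentration used only for the lower tail of $X_0$ and each $X_j$ controlled through its expectation alone. That this suffices despite the factor $N$ is precisely what the bound $\E(X_j)=o(\E(X_0))$ from (b) permits, since $N$ is a fixed constant.
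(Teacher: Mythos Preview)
Your argument is correct and is precisely the natural elaboration of the paper's own proof, which consists of the single sentence ``The result follows automatically from Theorem~\ref{thm2}.'' Your Markov/Chebyshev split combined with the union bound over the $N$ extensions is exactly how one makes that assertion rigorous, and the observation that concentration is available only for $X_0$ (hence the asymmetric treatment) matches the structure of the proof of Theorem~\ref{thm2} itself.
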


The result follows automatically from Theorem \ref{thm2}. Examples when Theorem \ref{thm3} can be applied to produce interesting results appear later, see \S \ref{nontriviality} and \S \ref{torsion2}. 

\section{Uniform Hyperbolicity}

In this section we state a theorem about uniform hyperbolically of random simplicial complexes. This theorem plays a crucial role later in this paper.

First we recall the relevant definitions. Let $X$ be a finite simplicial complex. For a simplicial loop in the 1-skeleton $\gamma: S^1\to X^{(1)}\subset X$, we denote by $|\gamma|$ {\it the length} of 
$\gamma$, i.e. the number of edges composing $\gamma$. If $\gamma$ is null-homotopic,
$\gamma \sim 1$, we denote by $A_X(\gamma)$ {\it the area} of $\gamma$, i.e. the minimal number of $2$-simplices in any simplicial filling $V$ for $\gamma$.
{\it A simplicial filling} (or a simplicial Van Kampen diagram) for a loop $\gamma$ is defined as a pair of simplicial maps $S^1\stackrel{i}\to V\stackrel{b}\to X$
(where $V$ is a contractible 2-dimensional complex) such that $\gamma=b\circ i$ and the mapping cylinder of $i$ is a disc with boundary $S^1\times 0$, see \cite{BHK}.

Next one defines {\it the isoperimetric constant} of $X$:
$$I(X)= \inf \left\{\frac{|\gamma|}{A_X(\gamma)}; \quad \gamma: S^1\to X^{(1)}, \gamma\sim 1 \quad \mbox{in $X$}\right\}\, \in \, \R.$$
Clearly $I(X)=I(X^{(2)})$, i.e. the isoperimetric constant $I(X)$ depends only on the 2-skeleton $X^{(2)}$.
It is well-known that the positivity $I(X)>0$ depends only on the fundamental group $\pi_1(X)$; in fact, one of the many equivalent definitions of hyperbolicity of discrete groups 
in the sense of M. Gromov \cite{Gromov87} states that $\pi_1(X)$ is hyperbolic iff $I(X)>0$. Clearly, the precise value of $I(X)$ depends on the simplicial structure of $X$. Knowing $I(X)$ (or a lower bound for it) is extremely useful as we shall demonstrate later in this paper. 

Below is the main result of this section:

\begin{theorem}\label{hyp}
Consider a random simplicial complex $Y\in \Omega_n^r$ with respect to the probability measure 
$\mathbb{P}_{r,\mathfrak{p}}$, where $\mathfrak{p}=(p_0,\ldots,p_r)$, 
$r\geq 2$. Assume that 
\begin{eqnarray}\label{numberofvertices}
np_0\to \infty
\end{eqnarray} and for some $\epsilon>0$,
\begin{eqnarray}\label{epsilon}
(np_0)^{1+\epsilon} p_1^3 p_2^2\, \to \, 0.\end{eqnarray}
Then there exists a constant $c_\epsilon>0$ (depending only on $\epsilon$) such that a random complex $Y\in \Omega_n^r$ has the following property with probability tending to one: any subcomplex $Y'\subset Y$ satisfies \begin{eqnarray}I(Y')\geq c_\epsilon.\end{eqnarray}
In particular, any subcomplex $Y'\subset Y$ has a Gromov hyperbolic fundamental group, a.a.s..
\end{theorem}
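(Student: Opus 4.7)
The plan is a first moment argument: I would show that, a.a.s., no simplicial map $b : V \to Y$ from a triangulated disc $V$ with $A$ triangles and $L$ boundary edges satisfying $L < c_\epsilon A$ exists. This suffices, because if $I(Y') < c_\epsilon$ for some subcomplex $Y' \subset Y$, a minimal simplicial filling of a witness null-homotopic loop in $Y'$ provides exactly such a forbidden map. Euler's formula for a triangulated disc gives $v_V = 1 + (A+L)/2$ and $e_V = (3A+L)/2$, while $t_V = A$.

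To bound the expected number of such pairs $(V, b)$: the number of combinatorial types of triangulated discs with $A$ triangles is at most $K^A$ for a universal constant $K$ (a standard enumerative fact), and a simplicial map $b$ is determined by its action on the $v_V$ vertices, giving at most $n^{v_V}$ candidates; for each, the probability that the image lies in $Y$ is at most $p_0^{v_V} p_1^{e_V} p_2^{A}$ in the generic case when $b$ is injective on vertices. Non-injective $b$ contribute less to the total expectation once $np_0 \to \infty$, since every vertex identification costs a factor of $(np_0)^{-1}$ that dominates the corresponding gains from increased $p_i$ exponents. Summing over $L \le c_\epsilon A$ (the $L$-sum is either a geometric series in $[(np_0)p_1]^{1/2}$ or dominated by its top term, contributing a factor $[(np_0)p_1]^{c_\epsilon A/2}$) and then over $A \ge 3/c_\epsilon$, the total expected count is at most
\[
O(np_0) \cdot \sum_{A \ge 3/c_\epsilon} A \cdot \left[ K' \, (np_0)^{(1+c_\epsilon)/2} \, p_1^{(3+c_\epsilon)/2} \, p_2 \right]^{A}.
\]

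The key calculation is that, for $c_\epsilon \le \epsilon$, the base $z := K'(np_0)^{(1+c_\epsilon)/2} p_1^{(3+c_\epsilon)/2} p_2$ satisfies $z^2 \le (K')^2 (np_0)^{c_\epsilon - \epsilon} \cdot (np_0)^{1+\epsilon} p_1^3 p_2^2 \to 0$, using $p_1 \le 1$ and the hypothesis; hence the series converges and is dominated by its leading term of order $A_0 z^{A_0}$ with $A_0 = \lceil 3/c_\epsilon \rceil$. A short computation shows $(np_0) \cdot z^{A_0} \to 0$ whenever $c_\epsilon < 3\epsilon/5$: the effective exponent on $np_0$ becomes $(5c_\epsilon - 3\epsilon)/(2c_\epsilon) < 0$, and the remaining factor tends to zero by hypothesis. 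Setting $c_\epsilon := \epsilon/2$ suffices, and Markov's inequality then delivers the uniform isoperimetric bound $I(Y') \ge c_\epsilon$ for every $Y' \subset Y$, a.a.s.

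The main obstacle is rigorously handling non-injective simplicial maps $b : V \to Y$, where the image has fewer simplices and hence larger inclusion probability than in the injective case. The standard remedy (as in Linial--Meshulam and in Babson--Hoffman--Kahle) is to sum over vertex-identification patterns in $V$ and verify that the injective case dominates because $np_0 \to \infty$; an alternative is to apply Theorem~\ref{thm1} \textbf{A} directly to a carefully chosen ``dense'' subcomplex of the image $b(V)$, so that the minimum in (\ref{assumption1}) is controlled by the hypothesis $(np_0)^{1+\epsilon} p_1^3 p_2^2 \to 0$.
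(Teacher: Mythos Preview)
Your approach is genuinely different from the paper's, and the gap you yourself flag---the non-injective case---is real and not repairable by the remedies you suggest. Your claim that ``every vertex identification costs a factor of $(np_0)^{-1}$ that dominates the corresponding gains from increased $p_i$ exponents'' is false. Take for instance $p_0=p_2=1$ and $p_1=n^{-1/2}$ (so the hypothesis holds for any $\epsilon<1/2$). Identifying two non-adjacent vertices of $V$ that share $k$ common neighbours replaces the injective contribution by $(np_0)^{-1}p_1^{-k}$ times itself, i.e.\ multiplies it by $n^{k/2-1}$; for $k\ge 3$ this \emph{increases} the contribution. Such configurations occur in triangulated discs, and iterating them shows that the sum over identification patterns need not be dominated by the injective term. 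More fundamentally, your first moment counts pairs $(V,b)$ rather than bad subcomplexes $X=b(V)$: many non-minimal fillings hit the same small image $X$, so even when $X$ itself is harmless (e.g.\ a single triangle, where every loop has area $1$), infinitely many $(V,b)$ with $|\partial V|<c_\epsilon f_2(V)$ map onto it, and these are all tallied. Neither Linial--Meshulam nor Babson--Hoffman--Kahle sum over identification patterns for unbounded $A$; BHK uses exactly the local-to-global principle to avoid this.

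The paper's route is structurally different: it invokes Gromov's local-to-global principle (Theorem~\ref{localtoglobal}) to reduce the question to pure subcomplexes $S\subset Y$ with at most a \emph{fixed} number $44^3 C_\epsilon^{-2}$ of $2$-simplices. There are only finitely many isomorphism types of such $S$, and Lemma~\ref{small} shows (via a straightforward first moment on this finite list) that a.a.s.\ every such $S\subset Y$ is $\epsilon$-admissible. The hard deterministic content is then Theorem~\ref{uniform}: every $\epsilon$-admissible pure $2$-complex satisfies $I(S)\ge C_\epsilon$. Its proof (in the appendix) is not a counting argument at all but a structural analysis---classifying admissible minimal cycles, extracting cores, and a layered surgery on minimal Van Kampen discs---that has no analogue in your sketch. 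Your ``alternative'' of applying Theorem~\ref{thm1}A to a dense subcomplex of the image is essentially Lemma~\ref{small}, but it only works once you have reduced to finitely many image types, which is precisely what the local-to-global step buys and what your direct sum over all $A$ cannot.
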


Thus, under the assumption (\ref{epsilon}), all random complexes and their subcomplexes have isoperimetric constants bounded below by $c_{\epsilon}>0$ with probability tending to $1$ as $n\to \infty$. 

Assumption (\ref{numberofvertices}) guaranties that the number of vertices of $Y$ tends to infinity, see \cite{CF14}, Lemma 2.5. 

Using Theorem \ref{hyp} we shall establish the following Corollary:

\begin{corollary} {\rm [see Theorem \ref{nontrivial}]}
If additionally to the hypothesis of Theorem \ref{hyp} one has 
\begin{eqnarray}\label{meaning}np_0p_1\to\infty\end{eqnarray}
as $n\to\infty$ 
then a random complex $Y\in \Omega_n^r$ has non-trivial fundamental group, a.a.s.
\end{corollary}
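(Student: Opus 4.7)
The plan is to exhibit a simplicial loop in $Y$ that is not null-homotopic, which immediately forces $\pi_1(Y)\neq 1$. First, apply Theorem \ref{hyp} to obtain, a.a.s., a uniform isoperimetric bound $I(Y')\ge c_\epsilon$ on every subcomplex $Y'\subset Y$: any null-homotopic loop $\gamma$ admits a simplicial filling of area at most $|\gamma|/c_\epsilon$. Then, by Example \ref{circle} and the extra hypothesis $np_0p_1\to\infty$, a.a.s.\ $Y$ contains an embedded simplicial cycle $C$ of any fixed length $m$. Choose $m$ sufficiently large (in terms of $\epsilon$, as specified below) and set $A=\lceil m/c_\epsilon\rceil$; the goal becomes to show that this $C$ cannot be null-homotopic in $Y$.

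Let $\mathcal F=\{S_1,\dots,S_N\}$ enumerate, up to isomorphism, the finite collection of simplicial complexes $S_j\supset C$ with $f_2(S_j)\le A$ and in which $C$ is null-homotopic. Apply Theorem \ref{thm3} with $S=C$ and this family. Condition (a) is immediate: every non-empty subcomplex $T\subset C$ is a disjoint union of arcs, so $f_1(T)\le f_0(T)$, and hence $n^{f_0(T)}\prod_i p_i^{f_i(T)}\ge(np_0p_1)^{f_0(T)}\to\infty$. Once condition (b) is verified, Theorem \ref{thm3} supplies a.a.s.\ an embedding $\varphi:C\hookrightarrow Y$ that does not extend to any $S_j$. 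If $\varphi(C)$ were null-homotopic in $Y$, the uniform isoperimetric bound would produce a filling of area $\le A$ whose image in $Y$ is some member of $\mathcal F$ extending $\varphi$, a contradiction; hence $\pi_1(Y)\neq 1$ a.a.s.

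The technical heart is checking condition (b): for every $S_j$ with extra vertex, edge, and triangle counts $(v_j,e_j,t_j)$ relative to $C$, one needs $(np_0)^{v_j}p_1^{e_j}p_2^{t_j}\to 0$. Each $S_j$ is the image of a simplicial Van Kampen disk $V$ with $F\le A$ triangles, which by Euler has exactly $(F-m)/2+1$ interior vertices and $(3F-m)/2$ interior edges; in the canonical case (no identifications) the product factors as
\[
(np_0)(p_1p_2)^m\bigl((np_0)p_1^3p_2^2\bigr)^{(F-m)/2}.
\]
The second factor tends to $0$ for $F>m$ by the hypothesis $(np_0)^{1+\epsilon}p_1^3p_2^2\to 0$ (which dominates $(np_0)p_1^3p_2^2$ since $np_0\to\infty$). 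For the first factor, set $a=\log(np_0)$, $b=-\log p_1$, $c=-\log p_2$; the hypothesis yields $(1+\epsilon)a\le 3b+2c-\omega(1)$, which together with $a\to\infty$ forces $b+c\to\infty$, so that $a-m(b+c)$ is bounded above by $b\bigl(\tfrac{3}{1+\epsilon}-m\bigr)+c\bigl(\tfrac{2}{1+\epsilon}-m\bigr)-\omega(1)\to-\infty$ as soon as $m>3/(1+\epsilon)$. The boundary case $F=m-2$ (the bare triangulation of the $m$-gon, with $v=0$) reduces to $p_1^{m-3}p_2^{m-2}\to 0$, which holds for $m\ge 4$ because $(np_0)^{1+\epsilon}p_1^3p_2^2\to 0$ combined with $np_0\to\infty$ forces at least one of $p_1,p_2$ to tend to $0$.

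The main obstacle is extending the estimate above to those $S_j$ arising from Van Kampen diagrams with non-trivial identifications of interior simplices: such identifications strictly reduce $v_j$ (favorable, since $np_0\ge 1$) while simultaneously reducing $e_j$ (potentially unfavorable). A careful bookkeeping — exploiting that each identified pair of interior vertices collapses at least one pair of incident interior edges — confirms that the product remains dominated by the canonical estimate, and the argument goes through for $m$ chosen sufficiently large in terms of $\epsilon$.
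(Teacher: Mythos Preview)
Your overall strategy matches the paper's: invoke Theorem~\ref{hyp} for a uniform isoperimetric bound, enumerate the finitely many possible ``filling complexes'' of bounded area, and use Theorem~\ref{thm3} to produce an embedding of a short cycle that extends to none of them. The divergence is entirely in how condition (b) of Theorem~\ref{thm3} is verified, and there your argument has a genuine gap.

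First, as written your family $\mathcal F$ is infinite: there are infinitely many isomorphism types of complexes $S\supset C$ with $f_2(S)\le A$ in which $C$ is null-homotopic (add isolated vertices, or attach irrelevant pieces). What you need is the finite list of \emph{images of Van Kampen fillings} of area $\le A$; you implicitly switch to this later, but the two are not the same. More seriously, your treatment of non-injective Van Kampen maps is only a sketch. The assertion that ``each identified pair of interior vertices collapses at least one pair of incident interior edges'' is neither stated precisely nor proved, and the direction of the effect on the product depends on the relative sizes of $np_0$, $p_1^{-1}$, $p_2^{-1}$: identifying two vertices and thereby forcing an edge-identification multiplies the product by $(np_0)^{-2}p_1^{-1}$, which need not be $\le 1$ without further input. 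Iterating such moves can in principle drive $e_j$ well below $3v_j$, and nothing you wrote rules this out.

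The paper handles this by a completely different mechanism. It does not try to control arbitrary images of disks. Instead it uses Lemma~\ref{small} to restrict \emph{a priori} to $\epsilon$-admissible $S$ (non-admissible ones do not embed in $Y$ a.a.s., so they can be discarded), and adds a minimality requirement on $S$. Then the structure theory of admissible complexes (Lemmas~\ref{typeA} and~\ref{typeB} on minimal cycles) forces $b_2(S)=0$, hence $\chi(S)\le 1$; minimality gives $L(S)\le 4$. Plugging these into the identities $3\chi+L=3f_0-f_1$ and $2\chi+L=2f_0-f_2$ for $S$ and $C$ yields the clean inequalities $f_1(S,C)\ge 3f_0(S,C)+1$ and $f_2(S,C)\ge 2f_0(S,C)+2$, from which condition (b) follows in one line. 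This use of $\epsilon$-admissibility together with the minimal-cycle analysis is precisely the missing idea in your argument; the ``careful bookkeeping'' you allude to is replaced by structural topology rather than raw disk combinatorics.
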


To illustrate the importance of the assumption (\ref{meaning}) note that 
{\it the \lq\lq alternative\rq\rq}\, assumption
\begin{eqnarray}\label{forest}
np_0p_1\to 0
\end{eqnarray} implies (as is easy to show) that
a random complex $Y\in \Omega_n^r$ is at most one dimensional and has no cycles, i.e. it is a forest, a.a.s. 
In particular, the fundamental group of $Y$ is trivial with any base point, a.a.s. 
Note also that (\ref{forest}) implies that $Y$ is disconnected, see Lemma 5.4 in \cite{CF15}. 

\begin{corollary}\label{cor22}
Assume that $p_i=n^{-\alpha_i}$, where $\alpha_i\ge 0$ are constants, $i=0, \dots, r$.
Then a random complex $Y\in \Omega_n^r$ has hyperbolic and nontrivial fundamental group for 
$$\begin{array}{ll}
\alpha_0+3\alpha_1+2\alpha_2>1,\\
\alpha_0+\alpha_1<1.
\end{array}
$$
If either $\alpha_0+3\alpha_1+2\alpha_2<1$ or $\alpha_0+\alpha_1>1$ then the fundamental group $\pi_1(Y)$ is trivial, a.a.s.
\end{corollary}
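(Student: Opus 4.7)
The plan is to translate the asymptotic hypotheses of Theorem \ref{hyp} and the corollary following it into the polynomial scaling $p_i=n^{-\alpha_i}$, and to handle the complementary regime by appealing to results already quoted in the introduction.

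First I would verify the hyperbolicity/nontriviality half. Under the scaling we have
\[
np_0=n^{1-\alpha_0},\quad np_0p_1=n^{1-\alpha_0-\alpha_1},\quad (np_0)^{1+\epsilon}p_1^3p_2^2=n^{(1+\epsilon)(1-\alpha_0)-3\alpha_1-2\alpha_2}.
\]
The assumption $\alpha_0+\alpha_1<1$ forces $\alpha_0<1$ (since $\alpha_1\ge 0$), so (\ref{numberofvertices}) and (\ref{meaning}) both hold. The key observation is that $\alpha_0+3\alpha_1+2\alpha_2>1$ is precisely the condition that the exponent in the last display is strictly negative for all sufficiently small $\epsilon>0$: the quantity $(1+\epsilon)(1-\alpha_0)-(3\alpha_1+2\alpha_2)$ tends to $-(\alpha_0+3\alpha_1+2\alpha_2-1)<0$ as $\epsilon\downarrow 0$, so by continuity we may pick such an $\epsilon$. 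Applying Theorem \ref{hyp} with this $\epsilon$ and then its Corollary yields that $\pi_1(Y)$ is hyperbolic and non-trivial a.a.s.

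For the triviality half I would split into the two subregions. If $\alpha_0+\alpha_1>1$ then $np_0p_1=n^{1-\alpha_0-\alpha_1}\to 0$, which is exactly hypothesis (\ref{forest}); as already noted in the text, $Y$ is a.a.s.\ a forest and hence $\pi_1(Y)$ is trivial for every choice of basepoint. If instead $\alpha_0+3\alpha_1+2\alpha_2<1$, then we are in region (a) of Figure \ref{fig:global}, where the simple connectivity result of \cite{CF15a} quoted in the introduction gives $\pi_1(Y)=1$ a.a.s. (One can sanity-check disjointness: the non-negativity of the $\alpha_i$ makes the two triviality conditions mutually compatible with, and complementary to, the hyperbolic region.)

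There is no substantial obstacle here: the only genuinely analytic step is the one-line continuity argument producing $\epsilon$, and the rest is direct substitution together with a citation of \cite{CF15a} and the forest remark following Theorem \ref{hyp}. The corollary is essentially a repackaging of Theorem \ref{hyp} into the $(\alpha_1,\alpha_2)$-picture of Figure \ref{fig:global}.
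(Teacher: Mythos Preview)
Your proposal is correct and matches the paper's own treatment: the first part is exactly the intended reduction to Theorem~\ref{hyp} and its corollary via the substitution $p_i=n^{-\alpha_i}$ and a choice of small $\epsilon$, and for the second part the paper likewise defers entirely to \cite{CF15a} (your additional split using the forest remark~(\ref{forest}) is consistent with how the paper discusses that regime).
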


The last part of Corollary \ref{cor22} is proven in an earlier paper \cite{CF15a}.

The Figure \ref{fig:nontrivial} depicts the domain where the fundamental group of the random complex is nontrivial. Here we assume that $\alpha_0=\alpha_3=\alpha_4 =\dots=0$. 
%
%
%

Next we recall {\it the local-to-global principle} of Gromov which plays a crucial role in the proof of Theorem \ref{hyp}: 

\begin{theorem}\label{localtoglobal} Let $X$ be a finite 2-complex and let $C>0$ be a constant such that any pure subcomplex $S\subset X$ having at most
$(44)^3\cdot C^{-2}$ two-dimensional simplexes satisfies $I(S)\ge C$. Then $I(X)\ge C\cdot 44^{-1}$.
\end{theorem}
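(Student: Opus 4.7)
The plan is to argue by contradiction. Suppose $I(X) < C/44$; pick a null-homotopic simplicial loop $\gamma$ with $|\gamma| < (C/44)\,A_X(\gamma)$, and among all such ``bad'' loops choose one minimizing $A := A_X(\gamma)$. Fix a minimal-area simplicial Van Kampen diagram $V$ for $\gamma$. Its image is contained in a pure 2-subcomplex $S\subset X$ with $f_2(S)\le A$ (take the pure 2-closure of $b(V)$; it contains $\gamma$ because every boundary edge of a planar disk diagram is a face of an interior 2-simplex).

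If $A \le 44^3 C^{-2}$, the hypothesis applied to $S$ gives $I(S)\ge C$; since any filling of $\gamma$ inside $S$ is a filling inside $X$, one has $A\le A_S(\gamma) \le |\gamma|/C$, i.e.\ $|\gamma|\ge CA$, already contradicting the bad ratio. So we may assume $A > 44^3 C^{-2}$ and proceed by iterated planar bisection.

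The key combinatorial input is a balanced separator lemma for simplicial disk diagrams: every simplicial disk diagram $D$ of area $a$ admits a simple simplicial arc joining two vertices of $\partial D$, of length $O(\sqrt a)$, whose removal splits $D$ into two sub-disks of areas both lying in $[a/3,\,2a/3]$. This is the classical planar separator argument, obtained by sweeping concentric graph balls from a well-chosen vertex of $\partial D$ and pigeonholing on the lengths of the concentric spheres. Iterate this on $V$ to produce a dyadic tree of bisections whose leaves are sub-disks of area $\le 44^3 C^{-2}$; minimality of $V$ forces each leaf $L$ to have $\mathrm{area}(L) = A_X(\partial L)$ (otherwise a smaller filling of $\partial L$ glued to the rest would contradict minimality of $V$). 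The hypothesis therefore applies to each leaf and yields $|\partial L|\ge C\cdot \mathrm{area}(L)$.

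Summing over leaves gives $\sum_L |\partial L| \ge CA$, while $\sum_L |\partial L| = |\gamma| + 2T$ where $T$ is the total length of all cuts introduced during the bisection. A geometric-series estimate bounds $T$: at depth $k$ there are $2^k$ sub-disks of area $\sim A/2^k$ with cut length $\sim \sqrt{A/2^k}$, and the dominant contribution comes from the finest level $2^k\sim C^2 A/44^3$, giving $T\le \text{const}\cdot CA/44^{3/2}$. Combining,
\[
|\gamma|\ \ge\ CA\ -\ \text{const}\cdot \frac{CA}{44^{3/2}}\ \ge\ \frac{CA}{44},
\]
the last inequality being the numerical content of calibrating the separator constant against $44^{3/2}(1-1/44)$; this contradicts the assumption $|\gamma|<(C/44)A$. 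The main technical obstacle is precisely the balanced planar separator lemma with simultaneous $O(\sqrt a)$ control on cut length and constant-fraction area split; everything else is bookkeeping, and the specific constants $44$ and $44^3$ in the statement are exactly those that make the geometric-series balance close.
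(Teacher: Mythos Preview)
The paper does not prove this theorem at all: it is merely \emph{recalled} as ``the local-to-global principle of Gromov'' (cf.\ \cite{Gromov87} and the specific formulation with the constant $44$ in \cite{BHK}), and then used as a black box in the proof of Theorem~\ref{hyp}. So there is no paper proof to compare against.

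Your sketch follows the correct overall strategy---it is essentially Gromov's original argument, also found in Papasoglu's work and in \cite{BHK}: take a minimal van Kampen disk for a violating loop, repeatedly bisect it by short arcs until the pieces are small enough for the local hypothesis to apply, and then control the total length of the cuts by a geometric series. That said, two steps are doing almost all the work and you have not actually carried them out:

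\emph{(i) The separator lemma.} The statement you invoke---a simple simplicial arc of length $O(\sqrt a)$ between boundary vertices whose removal leaves two sub-disks each of area in $[a/3,2a/3]$---is not the classical Lipton--Tarjan theorem (which produces a vertex set, not a simple arc, and gives no simultaneous guarantee of both the length bound and the balanced split for arcs). The BFS-layer argument you allude to yields short separating \emph{cycles or level sets}, and turning this into a single simple arc with the stated balance needs a genuine argument; moreover, the precise constant in the $O(\sqrt a)$ bound feeds directly into whether the final inequality closes with $44$. This is the technical heart of the proof and cannot be dismissed as ``classical''.

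\emph{(ii) The numerical closure.} You write ``$T\le \text{const}\cdot CA/44^{3/2}$'' and then ``the last inequality being the numerical content of calibrating the separator constant against $44^{3/2}(1-1/44)$''. This is precisely where the proof lives: one must track the separator constant, the $[1/3,2/3]$ split ratio, and the geometric-series sum explicitly and verify that the outcome is $\ge CA/44$. Declaring it ``bookkeeping'' is not a proof; in the published versions (e.g.\ \cite{BHK}) this computation occupies the bulk of the argument and is where the specific constant $44$ emerges.

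In short: right architecture, but the two load-bearing steps are asserted rather than proved.
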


Now we give the following definition.

\begin{definition}\label{eps} Let  $\epsilon>0$ be a positive number. We shall say that a finite 2-dimensional simplicial complex $S$ is {\it $\epsilon$-admissible} if 
the following system of linear inequalities 
\begin{eqnarray}\label{ineq4}
\left\{
\begin{array}{l}
3\alpha_1+ 2\alpha_2 > 1+\epsilon,\\ 
\alpha_1f_1(T)+ \alpha_2f_2(T) < f_0(T), \quad T\subset S,
\end{array}
\right.
\end{eqnarray}
admits a solution with $\alpha_1\ge 0$, $\alpha_2\ge 0$,
where $T\subset S$ runs over all non-empty subcomplexes. In other words, a 2-complex $S$ is $\epsilon$-admissible if there exist non-negative real 
numbers $\alpha_1$ and 
$\alpha_2$ such that $3\alpha_1 +2\alpha_2>1+\epsilon$ and for any non-empty subcomplex $T\subset S$ one has
$\alpha_1f_1(T)+ \alpha_2f_2(T) < f_0(T)$. 

We shall say that a simplicial complex $S$ is admissible if it is $\epsilon$-admissible for some $\epsilon>0$. 
\end{definition}

The property of being $\epsilon$-admissible is a combinatorial property of a 2-complex $S$ which amounts to certain restrictions on the numbers of vertices, edges and faces for all subcomplexes of $S$. We may mention two special cases: 

Case A: A complex $S$ is $\epsilon$-admissible if for any subcomplex $T\subset S$ one has $$\mu_2(T) \equiv \frac{f_0(T)}{f_2(T)} > \frac{1}{2}+\frac{\epsilon}{2}.$$
This is equivalent to Definition \ref{eps} under an additional assumption that $\alpha_1=0$. 
Complexes with $\mu_2(S) >1/2$ were studies in \S 2 of \cite{CF1}. 

Case B: A complex $S$ is $\epsilon$-admissible if for any subcomplex $T\subset S$ one has $$\mu_1(T) \equiv \frac{f_0(T)}{f_1(T)} > \frac{1}{3}+\frac{\epsilon}{3}.$$
This is equivalent to Definition \ref{eps} under an additional assumption that $\alpha_2=0$. Complexes with $\mu_1(S) >1/3$ were studies in \S 5 of 
\cite{CF2}. 


\begin{example} \rm
Let $X$ be a simplicial complex $X$ homeomorphic to the torus $T^2$. 
Then using the Euler characteristic relation we obtain $f_1(X) =2f_0(X)$ and $f_2(X) = 2f_0(X)$. The inequality 
$\alpha_1f_1(X) + \alpha_2f_2(X) < f_0(X)$ (see the second line in (\ref{ineq4})) is equivalent to $3\alpha_1+2\alpha_2 <1$ which contradicts the first line of (\ref{ineq4}). 
Hence the is no $\epsilon>0$ such that $X$ is $\epsilon$-admissible. 
%
\end{example}

\begin{remark}\rm A 2-complex $S$ is said to be {\it balanced} if for any subcomplex $T\subset S$ one has $\mu_i(T)\ge \mu_i(S)$ for $i=1, 2$; see \S 4 from \cite{CF14}. Recall that 
$\mu_i(T)$ denotes the ratio $f_0(T)/f_i(T)$. A balanced 2-complex $S$ is $\epsilon$-admissible iff 
\begin{eqnarray}\label{ineq5}\alpha_1f_1(S) +\alpha_2f_2(S) <f_0(S)\end{eqnarray}
for some $\alpha_1\ge 0$ and $\alpha_2\ge 0$ satisfying $3\alpha_1+2\alpha_2>1+\epsilon$. Note that any triangulation of a closed surface with non-negative Euler characteristic is balanced, see Theorem 4.4 from \cite{CF14}. 
\end{remark}

\begin{example}\label{exsurface} \rm Let $S$ be a connected 2-complex homeomorphic to a closed surface with positive Euler characteristic, 
$\chi(S)>0$. Then $S$ is $\epsilon$-admissible assuming that 
\begin{eqnarray}\label{ineq6}
f_0(S) \le \chi(S) \cdot \frac{1+\epsilon}{\epsilon}.
\end{eqnarray}
Indeed, $S$ is balanced (see above) and the well-known Euler type relations imply 
$$f_1(S) = 3(f_0(S)-\chi(S)), \quad f_2(S) = 2(f_0(S) -\chi(S)).$$
The inequality (\ref{ineq5}) can be rewritten in this case as
$$(3\alpha_1+2\alpha_2)\cdot (f_0(S)-\chi(S))<f_0(S)$$
and hence the system (\ref{ineq4}) is equivalent to 
$$1+\epsilon < 3\alpha_1+2\alpha_2 < \frac{f_0(S)}{f_0(S)-\chi(S)}.$$
We see that the existence of $\alpha_1, \alpha_2$ follows from the inequality
$$1+\epsilon < \frac{f_0(S)}{f_0(S)-\chi(S)}$$
which is equivalent to (\ref{ineq6}). We obtain that any triangulated sphere or projective plane with sufficiently {\it \lq\lq small\rq\rq}\,   number of vertices (as prescribed by (\ref{ineq6})) is $\epsilon$-admissible. 

Besides, all triangulations of the 2-sphere and of  the 
real projective plane are admissible. 
\end{example}

\begin{example} \rm 
Any graph is $\epsilon$-admissible. Indeed, since $f_2(T)=0$, 
in the Definition \ref{eps} one may take $\alpha_1$ very small and $\alpha_2$ very large.
\end{example}

The importance of the notion of $\epsilon$-admissibility stems from the following Lemma:

\begin{lemma}\label{small}
Assume that $np_0\to \infty$ and for some $\epsilon>0$, 
\begin{eqnarray}\label{ineq33}
(np_0)^{1+\epsilon}p_1^3p_2^2 \to 0.
\end{eqnarray}
For a fixed constant $C>0$, a random simplicial complex $Y\in \Omega_n^r$ with probability tending to one has the following property: 
any simplicial pure 2-dimensional subcomplex of $Y$ with at most $C$ 2-simplexes is $\epsilon$-admissible. 
In other words, under the condition (\ref{ineq33}), consider the finite set ${\mathcal F}_{C,\epsilon}\, =\, \{X\}$ of isomorphism classes of pure 2-dimensional simplicial complexes $X$ satisfying
$f_2(X)\le C$ which are not $\epsilon$-admissible. Then with probability tending to 1 a random complex $Y\in \Omega_n^r$ with respect to the multiparameter measure $\p=(p_0, \dots, p_r)$ contains none of the complexes $X\in {\mathcal F}_{C, \epsilon}$ as a simplicial subcomplex. 
\end{lemma}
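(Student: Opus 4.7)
There are only finitely many isomorphism classes of pure 2-dimensional simplicial complexes with at most $C$ 2-simplices, so $\mathcal{F}_{C,\epsilon}$ is a finite family. A union bound therefore reduces the lemma to showing, for each fixed $X\in\mathcal{F}_{C,\epsilon}$, that a random $Y\in\Omega_n^r$ contains $X$ as a simplicial subcomplex with probability $o(1)$. I plan to invoke Theorem \ref{thm1}(A): it suffices to exhibit, for each such $X$, a nonempty subcomplex $T\subset X$ (allowed to depend on $n$) with
\begin{equation*}
np_0\cdot p_1^{f_1(T)/f_0(T)}p_2^{f_2(T)/f_0(T)}\;\longrightarrow\;0.
\end{equation*}
Pure 2-dimensionality of $X$ forces $X$ to contain at least one triangle, so in particular $f_0(X)\le 3f_2(X)\le 3C$.

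\textbf{LP certificate from non-admissibility.} Fix $X\in\mathcal{F}_{C,\epsilon}$ and consider the linear program
\begin{equation*}
M_X=\max\bigl\{3\alpha_1+2\alpha_2 \,:\, \alpha_1,\alpha_2\ge 0,\ \alpha_1 f_1(T)+\alpha_2 f_2(T)\le f_0(T)\ \forall\,\emptyset\ne T\subset X\bigr\},
\end{equation*}
which is feasible ($\alpha=0$ works) and bounded (any triangle in $X$ gives $3\alpha_1+\alpha_2\le 3$). Non-$\epsilon$-admissibility of $X$ forces $M_X\le 1+\epsilon$: otherwise, if $\alpha^{\ast}$ attained $M_X>1+\epsilon$, then $(1-t)\alpha^{\ast}$ for sufficiently small $t>0$ would satisfy the strict inequalities defining $\epsilon$-admissibility, a contradiction. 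LP duality then supplies nonnegative multipliers $\mu_T$, supported on a finite collection of subcomplexes $T\subset X$ depending only on $X$, satisfying
\begin{equation*}
\sum_T\mu_T f_0(T)\le 1+\epsilon,\qquad \sum_T\mu_T f_1(T)\ge 3,\qquad \sum_T\mu_T f_2(T)\ge 2,
\end{equation*}
and I set $K_X:=\sum_T\mu_T$, a finite constant depending only on $X$.

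\textbf{Combining with the hypothesis and main obstacle.} Set $L_n=\log(np_0)\to\infty$ and $a_i=-\log p_i/L_n\ge 0$ for $i=1,2$; the hypothesis $(np_0)^{1+\epsilon}p_1^3p_2^2\to 0$ is then equivalent to $\eta_n L_n\to\infty$, where $\eta_n:=3a_1+2a_2-(1+\epsilon)$. Multiplying the first two dual inequalities by $a_1,a_2$ respectively and subtracting the third yields
\begin{equation*}
\sum_T\mu_T\bigl(a_1 f_1(T)+a_2 f_2(T)-f_0(T)\bigr)\;\ge\;(3a_1+2a_2)-(1+\epsilon)\;=\;\eta_n,
\end{equation*}
so by averaging some $T^{\ast}=T^{\ast}(n)\subset X$ satisfies $a_1 f_1(T^{\ast})+a_2 f_2(T^{\ast})-f_0(T^{\ast})\ge \eta_n/K_X$. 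Dividing by $f_0(T^{\ast})\le 3C$ and multiplying by $L_n$ gives
\begin{equation*}
L_n\bigl(a_1 f_1(T^{\ast})/f_0(T^{\ast})+a_2 f_2(T^{\ast})/f_0(T^{\ast})-1\bigr)\;\ge\; L_n\eta_n/(3CK_X)\;\to\;\infty,
\end{equation*}
which is precisely $np_0\cdot p_1^{f_1(T^{\ast})/f_0(T^{\ast})}p_2^{f_2(T^{\ast})/f_0(T^{\ast})}\to 0$, so Theorem \ref{thm1}(A) applies. The main obstacle is that $a_1,a_2$ and the optimizer $T^{\ast}$ all depend on $n$, and $\eta_n$ itself need not be bounded away from $0$, so a direct compactness or limiting argument fails; LP duality circumvents this by producing an $n$-independent weighted combination through which \emph{some} $T^{\ast}$ must absorb the full margin $\eta_n$ for every $n$.
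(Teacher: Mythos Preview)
Your proof is correct and reaches the same conclusion through a genuinely different mechanism than the paper's.

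The paper works directly with the \emph{primal} variables: it writes $p_i=n^{-\alpha_i(n)}$, normalises by setting $\alpha_i'=\alpha_i/(1-\alpha_0)$, and then explicitly perturbs $(\alpha_1',\alpha_2')$ downward by a carefully chosen amount $x/10$ (subtracted from the larger coordinate) to produce a pair $(\beta_1(n),\beta_2(n))\ge 0$ that still satisfies $3\beta_1+2\beta_2>1+\epsilon$. Non-admissibility of $X$ then immediately gives a subcomplex $T_n$ with $\beta_1 f_1(T_n)+\beta_2 f_2(T_n)\ge f_0(T_n)$, and the perturbation margin $x/10$ translates back into the bound $(np_0)^{f_0(T_n)}\prod p_i^{f_i(T_n)}\le e^{-\omega/10}$.

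You instead work on the \emph{dual} side: LP duality converts ``$M_X\le 1+\epsilon$'' into an $n$-independent weighted certificate $(\mu_T)$, and a pigeonhole/averaging step against the actual exponents $(a_1(n),a_2(n))$ selects the witness $T^\ast(n)$. The two arguments are dual to one another in a literal sense: the paper's $(\beta_1,\beta_2)$ is a near-optimal primal point, your $(\mu_T)$ is a dual optimiser. Your route is more conceptual and makes the role of the $\epsilon$ in ``$\epsilon$-admissible'' transparent (it is exactly the slack in the LP value), at the cost of invoking strong duality; the paper's route is completely elementary but requires the ad~hoc choice of perturbation size. Either way the key point---that the witness $T^\ast$ must be allowed to vary with $n$ because $\eta_n$ need not stay bounded away from zero---is handled correctly in both.

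One minor wording issue: when you write ``multiplying the first two dual inequalities by $a_1,a_2$ respectively and subtracting the third,'' the indexing does not match the order in which you displayed the three inequalities; the intended computation (multiply the $f_1$- and $f_2$-constraints by $a_1,a_2$ and subtract the $f_0$-bound) is clear and correct, but you should align the prose with the display.
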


\begin{proof}
We may write
$$p_i=n^{-\alpha_i},$$
where in general $\alpha_i=\alpha_i(n)$ is a function of $n$. 
By our assumption $np_0\to \infty$, we have $\alpha_0(n)<1$ for all large $n$, i.e. for all $n$ except finitely many. 
Our assumption (\ref{ineq33}) implies that 
$$3\alpha_1+2\alpha_2-(1-\alpha_0)(1+\epsilon) = \frac{\omega}{\log n},$$
where $\omega \to \infty$. This can be rewritten as 
$$3\alpha'_1+2\alpha'_2= 1+\epsilon + x,$$
where
$$\alpha_i'=\frac{\alpha_i}{1-\alpha_0}, \quad x= \frac{\omega}{(1-\alpha_0)\cdot \log n}.$$
Note that $x\le 5\cdot \max\{\alpha'_1, \alpha'_2\}$. For $i=1, 2$ define 
$$\beta_i(n) = \left\{
\begin{array}{ll} 
\alpha'_i(n) - x/10, & \mbox{if}\quad \alpha'_i(n)=\max\{\alpha_1'(n), \alpha_2'(n)\},\\ \\
\alpha'_i(n), & \mbox{otherwise}.
\end{array}
\right.
$$
Then $$\beta_i(n)\ge 0, \quad \mbox{and}\quad 3\beta_1(n)+2\beta_2(n)>1+\epsilon$$ 
for any $n$.

Let $S$ be a simplicial complex with $f_2(S)\le C$ which is not $\epsilon$-admissible. As follows from Definition \ref{eps}, for any $n$ there is a subcomplex $T_n\subset S$ such that 
$$\beta_1f_1(T_n)+\beta_2f_2(T_n) \ge f_0(T_n),$$ which implies that 
$$\alpha'_1f_1(T_n) +\alpha'_2f_2(T_n) -f_0(T_n)\ge \frac{x}{10}.$$
Therefore 
\begin{eqnarray*}
(np_0)^{f_0(T_n)}\cdot \prod_{i=1}^r p_i^{f_i(T_n)}  &=&  \left[n^{1-\alpha_0}\right]^{f_0(T_n)-\sum_{i=1}^2\alpha'_i f_i(T_n)}\\
&\le & \left[n^{1-\alpha_0}\right]^{-x/{10}} = e^{-\omega/10}.
\end{eqnarray*}

Now we apply Theorem \ref{thm1} to conclude that the probability that $S$ is embeddable into $Y$ tends to zero as $n\to \infty$. 
\end{proof}

A crucial role in the proof of Theorem \ref{hyp} plays the following theorem stating that all $\epsilon$-admissible 2-dimensional complexes admit a universal lower bound on the value of their isoperimetric constant:

\begin{theorem}\label{uniform}
Given $\epsilon >0$ there exists a constant $C_\epsilon>0$ such that for any $\epsilon$-admissible finite simplicial pure 2-complex $X$
one has $I(X) \ge C_\epsilon$.
\end{theorem}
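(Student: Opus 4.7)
The plan is to combine the standard Euler relations for a minimal simplicial disk filling with the $\epsilon$-admissibility inequality applied to the image of the filling in $X$, thereby extracting a linear isoperimetric bound whose constant depends only on $\epsilon$.

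Fix an $\epsilon$-admissible pure $2$-complex $X$ with admissibility parameters $\alpha_1,\alpha_2\ge 0$ satisfying $3\alpha_1+2\alpha_2>1+\epsilon$ and $\alpha_1 f_1(T)+\alpha_2 f_2(T)<f_0(T)$ for every non-empty $T\subset X$. Let $\gamma\colon S^1\to X^{(1)}$ be a null-homotopic simplicial loop of length $L=|\gamma|$, and let $b\colon V\to X$ be a minimal simplicial filling, so $V$ is a simplicial disk with $|\partial V|=L$ and $f_2(V)=A_X(\gamma)=:A$. The disk identity $\chi(V)=1$ together with the edge-triangle double count (each interior edge lies on exactly two triangles, each boundary edge on exactly one) yields $2f_1(V)=3A+L$ and hence
\[
f_1(V)=\frac{3A+L}{2},\qquad f_0(V)=1+\frac{A+L}{2}.
\]
I would next reduce to the case where the image $T=b(V)\subset X$ has $f_i(T)=f_i(V)$ for $i=0,1,2$, using a fold-and-collapse minimization: any pair of $2$-simplices of $V$ sharing a boundary edge and mapping to the same $2$-simplex of $X$ can be excised, strictly reducing $f_2(V)$ and violating minimality, and analogous moves eliminate edge and vertex identifications. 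Applying admissibility to $T$ then gives
\[
\alpha_1\cdot\frac{3A+L}{2}+\alpha_2 A<1+\frac{A+L}{2},
\]
which simplifies to $(3\alpha_1+2\alpha_2-1)A<2+(1-\alpha_1)L$ and, using $3\alpha_1+2\alpha_2-1>\epsilon$, to
\[
\epsilon A<2+(1-\alpha_1)L.
\]
Applying admissibility to a single triangle subcomplex (assuming $X$ contains at least one $2$-simplex; otherwise $I(X)=\infty$ vacuously) forces $3\alpha_1+\alpha_2<3$, which together with $\alpha_2\ge 0$ yields $\alpha_1<1$. Splitting cases on $A$: if $A\le 4/\epsilon$ then $L/A\ge 3/A\ge 3\epsilon/4$ (any non-trivial simplicial loop satisfies $L\ge 3$); if $A>4/\epsilon$ then $\epsilon A-2>\epsilon A/2$, so $L>\epsilon A/(2(1-\alpha_1))\ge \epsilon A/2$. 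In either case $L/A\ge C_\epsilon$ with $C_\epsilon=\epsilon/2$.

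The main obstacle is the reduction claiming $f_i(T)=f_i(V)$. While $f_i(T)\le f_i(V)$ is automatic, non-injectivity of $b$ introduces non-negative defects $f_i(V)-f_i(T)$ that would contribute error terms to the displayed algebra. Carrying out the fold/collapse reduction rigorously, especially for interior vertex and edge identifications, is the technical heart of the argument. An alternative route, which bypasses this issue entirely, is to invoke Theorem \ref{localtoglobal}: it suffices to show $I(S)\ge C$ for pure subcomplexes $S\subset X$ with $f_2(S)\le 44^3 C^{-2}$, and since every subcomplex of an $\epsilon$-admissible complex is itself $\epsilon$-admissible, one reduces to a finite family of isomorphism types of small $\epsilon$-admissible complexes, each with positive isoperimetric constant (their fundamental groups being hyperbolic or finite), so the minimum over this finite family supplies the required $C_\epsilon$.
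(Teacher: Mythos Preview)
Your first approach has a genuine gap at exactly the point you identified, and the gap is not a technical difficulty but an impossibility. A minimal simplicial filling $b\colon V\to X$ can be taken fold-free (Ronan), but fold-freeness only rules out \emph{adjacent} $2$-simplices with the same image; it does not prevent $b(\sigma_1)=b(\sigma_2)$ for non-adjacent $\sigma_1,\sigma_2$, nor interior identifications of vertices and edges. In general one cannot collapse such identifications while keeping $V$ a disc, so the reduction to $f_i(T)=f_i(V)$ simply fails. The paper's proof is built precisely around this non-injectivity: it stratifies the image by multiplicity, letting $X_i\subset X$ be the pure subcomplex generated by $2$-simplices whose preimage has at least $i$ triangles, and then splits the connected components of $\sqcup_i X_i$ according to the sign of the quantity $L(\cdot)$. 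Components with $L\le 0$ are handled by a separate finiteness argument (there are only finitely many isomorphism types of tight $\epsilon$-admissible complexes with $L\le 0$, using the structure theorem that admissible complexes are homotopy equivalent to wedges of $S^1$, $S^2$, $P^2$), while the components with $L>0$ are controlled by a combinatorial inequality $\sum_{\lambda\in\Lambda^+} L(X_\lambda)\le |\partial D^2|$. Combining these yields the uniform bound. None of this structure is visible from your Euler-characteristic computation on $V$ alone.

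Your alternative route via Theorem \ref{localtoglobal} is circular. You propose to take $C_\epsilon$ as the minimum of $I(S)$ over $\epsilon$-admissible $S$ with $f_2(S)\le 44^3 C^{-2}$; but the cut-off $44^3 C^{-2}$ depends on $C$, so you are asking for a fixed point of the map $C\mapsto \min\{I(S):f_2(S)\le 44^3 C^{-2}\}$, and nothing prevents this minimum from decaying faster than $C$ as $C\to 0$. Knowing that each individual admissible $S$ has $I(S)>0$ (via hyperbolicity of $\pi_1$) gives no uniform lower bound over the growing family. In the paper, the local-to-global principle is used in the \emph{opposite} direction: Theorem \ref{uniform} is proved first by the direct multiplicity argument above, and only then is Theorem \ref{localtoglobal} invoked (in the proof of Theorem \ref{hyp}) with the constant already in hand.
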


\begin{proof}[\bf Proof of Theorem \ref{hyp} using Theorem \ref{localtoglobal} and Theorem  \ref{uniform}.]
Let $C_\epsilon>0$ be the constant given by Theorem \ref{uniform}.
Consider the set $\mathcal S$ of all isomorphism types of finite 
pure 2-complexes having at most $C=44^3\cdot C_\epsilon^{-2}$ 
two-dimensional simplexes. 
Clearly, the set $\mathcal S$ is finite.
Let $\mathcal{S}'\subset\mathcal S$ denote the subset of complexes in $\mathcal S$ which are not $\epsilon$-admissible.
Under the assumptions of Theorem \ref{hyp} a random complex $Y\in \Omega_n^r$ contains a complex from $\mathcal S'$  as a 
subcomplex with probability tending to zero as $n\to \infty$ as follows from Lemma \ref{small}.

All remaining complexes lying in $\mathcal S''=\mathcal S-\mathcal S'$ are $\epsilon$-admissible. Theorem \ref{uniform} states that 
any complex $S\in\mathcal S''$ satisfies $I(S)\geq C_\epsilon$.
Now applying Theorem \ref{localtoglobal} we obtain that any subcomplex $Y'\subset Y$ satisfies 
$I(Y')\ge
C_\epsilon\cdot 44^{-1}=c_\epsilon$ with probability tending to $1$ as $n\to \infty$. 

\end{proof}



Theorem \ref{uniform} will be proven in \S \ref{app}.

\section{Topology of admissible 2-complexes}
\label{admis}

In this section we examine the topology of 2-complexes which are admissible in the sense of Definition \ref{eps}. One of the central results proven here is Theorem \ref{wedge} describing homotopy types of admissible 2-complexes. 
In \S \ref{app} we shall continue the study of $\epsilon$-admissible complexes and present a proof of Theorem \ref{uniform}. 

Recall the definitions of the density invariants:
\begin{eqnarray}\label{dense}
\mu_i(S)= \frac{f_0(S)}{f_i(S)}, \quad i=1, 2. 
\end{eqnarray}
We shall use the formulae
\begin{eqnarray}\label{eq7}
\mu_1(S) = \frac{1}{3} + \frac{3\chi(S) +L(S)}{3f_1(S)}, \quad \mu_2(S) = \frac{1}{2} + \frac{2\chi(S) +L(S)}{2f_2(S)},
\end{eqnarray}
see formula (8) in \cite{CF2} and formula (2) in \cite{CF1}. Here
\begin{eqnarray}
L(S) = \sum_e \left(2-\deg e\right),
\end{eqnarray}
the sum is taken over the edges $e$ of $S$ and for an edge $e$ the symbol $\deg e$ (the degree of $e$) denotes the number of 2-simplexes containing $e$. 

\begin{lemma}\label{mu1 or mu2}
Let $S$ be an $\epsilon$-admissible 2-complex. 
Then for any subcomplex $T\subset S$ either 
$$\mu_1(T)>\frac{1+\epsilon}{3},\quad \textrm{or}\quad \mu_2(T)>\frac{1+\epsilon}{2}.$$
In particular, if $S$ is admissible then for any subcomplex $T\subset S$ one has either
\begin{eqnarray*}
3\chi(T)+L(T)>0,\quad \textrm{or}\quad 2\chi(T)+L(T)>0.
\end{eqnarray*}
 Moreover, if $S$ is admissible then for any subcomplex $T\subset S$ with $L(T)\leq 0$ one has $$\mu_1(T)>1/3.$$
\end{lemma}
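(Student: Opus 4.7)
The plan is to unpack Definition \ref{eps} and combine it with the Euler-type formulas (\ref{eq7}).

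First I would prove the dichotomy by contradiction. Pick witnesses $\alpha_1,\alpha_2\ge 0$ for $\epsilon$-admissibility of $S$, so $3\alpha_1+2\alpha_2>1+\epsilon$ and $\alpha_1 f_1(T)+\alpha_2 f_2(T)<f_0(T)$ for every nonempty $T\subset S$. Suppose for contradiction that some nonempty $T$ satisfies both $\mu_1(T)\le(1+\epsilon)/3$ and $\mu_2(T)\le(1+\epsilon)/2$, which (when $f_1(T),f_2(T)>0$) give $f_i(T)\ge \frac{i+1}{1+\epsilon}f_0(T)$ for $i=1,2$. Plugging into the admissibility inequality yields
$$\alpha_1 f_1(T)+\alpha_2 f_2(T)\ \ge\ \frac{3\alpha_1+2\alpha_2}{1+\epsilon}\,f_0(T)\ >\ f_0(T),$$
a contradiction. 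The degenerate cases $f_1(T)=0$ or $f_2(T)=0$ are trivial, since the corresponding $\mu_i(T)$ is effectively infinite.

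Next I would translate to Euler-characteristic form using (\ref{eq7}). If $f_1(T)>0$ and $f_2(T)>0$, then $\mu_1(T)>1/3 \Leftrightarrow 3\chi(T)+L(T)>0$ and $\mu_2(T)>1/2 \Leftrightarrow 2\chi(T)+L(T)>0$. Since admissibility gives some $\epsilon>0$ with the strict dichotomy of the first part, both $\mu_i$ exceed $1/3$ and $1/2$ respectively, so one of the two Euler inequalities holds. The edge cases are handled directly: if $f_1(T)=0$ then $T$ is a discrete set of vertices, so $\chi(T)=f_0(T)$, $L(T)=0$, and $3\chi(T)+L(T)=3f_0(T)>0$; if $f_2(T)=0$ but $f_1(T)>0$ then $L(T)=2f_1(T)$, $\chi(T)=f_0(T)-f_1(T)$, and $2\chi(T)+L(T)=2f_0(T)>0$.

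Finally, for the last claim assume $L(T)\le 0$. By the previous step either $3\chi(T)+L(T)>0$ (equivalently $\mu_1(T)>1/3$, and we are done), or $2\chi(T)+L(T)>0$. In the latter case, $2\chi(T)>-L(T)\ge 0$, so multiplying by $3/2$ gives $3\chi(T)>-\tfrac{3}{2}L(T)\ge -L(T)$ (using $L(T)\le 0$), i.e., $3\chi(T)+L(T)>0$, so again $\mu_1(T)>1/3$.

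There is no real obstacle here; the argument is essentially algebraic. The only point requiring care is the bookkeeping of the degenerate subcomplexes with $f_1(T)=0$ or $f_2(T)=0$, where the formulas (\ref{eq7}) must be interpreted or bypassed.
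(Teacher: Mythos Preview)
Your proposal is correct and follows essentially the same approach as the paper: both argue the dichotomy by contradiction (assuming both $\mu_i$ bounds fail and deriving $\alpha_1 f_1(T)+\alpha_2 f_2(T)\ge f_0(T)$), then invoke the formulae (\ref{eq7}) for the remaining statements. Your treatment is in fact more careful than the paper's, which simply writes ``The other statements follow from formulae (\ref{eq7})'' without spelling out the degenerate cases or the short deduction of $3\chi(T)+L(T)>0$ from $2\chi(T)+L(T)>0$ when $L(T)\le 0$.
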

\begin{proof} Suppose that for some $T\subset S$ one has $$\mu_1(T) \le (1+\epsilon)/3, \quad \mbox{and} \quad \mu_2(T) \le (1+\epsilon)/2.$$
Then for any $\alpha_1, \alpha_2\ge 0$ satisfying $3\alpha_1+2\alpha_2\geq 1+\epsilon$ one has 
$$\frac{\alpha_1}{\mu_1(T)}+\frac{\alpha_2}{\mu_2(T)} >1$$ 
which is equivalent to 
$$\alpha_1f_1(T) + \alpha_2f_2(T) >f_0(T),$$
implying that $S$ is not $\epsilon$-admissible. 

The other statements follow from formulae (\ref{eq7}). 
\end{proof}

\begin{corollary} Let $S$ be a simplicial 2-complex homeomorphic to a closed surface. If $\chi(S)\le 0$ then $S$ is not admissible. 
\end{corollary}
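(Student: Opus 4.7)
The plan is to obtain a contradiction by testing the admissibility inequality on $T = S$ itself and exploiting the Euler-type relations available for a closed triangulated surface. This is essentially the observation made for the torus case in Example 5 above, extended to all surfaces with $\chi(S) \le 0$.

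First, I would record that since $S$ is a closed surface, every edge lies in exactly two $2$-simplices, so $2 f_1(S) = 3 f_2(S)$. Combined with the Euler relation $f_0(S) - f_1(S) + f_2(S) = \chi(S)$, this gives
\begin{equation*}
f_1(S) = 3\bigl(f_0(S) - \chi(S)\bigr), \qquad f_2(S) = 2\bigl(f_0(S) - \chi(S)\bigr).
\end{equation*}
These are the same identities used in Example \ref{exsurface}.

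Next, assume for contradiction that $S$ is admissible, and pick $\epsilon > 0$ and $\alpha_1, \alpha_2 \ge 0$ with $3\alpha_1 + 2\alpha_2 > 1+\epsilon$ witnessing this. Substituting the formulae above into the admissibility inequality $\alpha_1 f_1(S) + \alpha_2 f_2(S) < f_0(S)$ for the subcomplex $T = S$, I would rewrite it as
\begin{equation*}
(3\alpha_1 + 2\alpha_2)\bigl(f_0(S) - \chi(S)\bigr) < f_0(S).
\end{equation*}

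Finally, the hypothesis $\chi(S) \le 0$ gives $f_0(S) - \chi(S) \ge f_0(S) > 0$, so the displayed inequality forces $3\alpha_1 + 2\alpha_2 < f_0(S)/(f_0(S) - \chi(S)) \le 1$. This contradicts $3\alpha_1 + 2\alpha_2 > 1 + \epsilon$, so no such $\alpha_1, \alpha_2$ exist and $S$ is not admissible. There is no real obstacle here; the only thing to be careful about is the case $\chi(S) = 0$, where $f_0(S) - \chi(S) = f_0(S)$ and the inequality collapses to $3\alpha_1 + 2\alpha_2 < 1$, still contradicting admissibility.
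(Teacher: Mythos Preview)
Your proof is correct. It is essentially the same argument as the paper's, testing the admissibility condition on $T=S$ and using the Euler relations for a closed surface. The only cosmetic difference is that the paper routes the contradiction through Lemma~\ref{mu1 or mu2}: since $L(S)=0$, that lemma would require $\mu_1(S)>1/3$, whereas $\mu_1(S)=\tfrac{1}{3}+\tfrac{\chi(S)}{f_1(S)}\le \tfrac{1}{3}$ when $\chi(S)\le 0$. Your version unpacks this directly from Definition~\ref{eps} and the formulae of Example~\ref{exsurface}, which is marginally more self-contained but amounts to the same computation.
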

\begin{proof}
Applying the previous Lemma \ref{mu1 or mu2} with $T=S$ and observing that $L(S)=0$ we have 
$$\mu_1(S) = \frac{1}{3} + \frac{\chi(S)}{f_1(S)}\, \le \, \frac{1}{3}$$
which shows that $S$ is not admissible due to Lemma \ref{mu1 or mu2}. 
\end{proof}

In Example \ref{exsurface} we showed that any closed surface $S$ with $\chi(S)>0$ is admissible. 

Recall that a 2-complex $S$ is called {\it closed} if every edge $e$ of $S$ is contained in at least two 2-simplexes. Note that for a closed complex $S$ one has $L(S)\le 0$. 
A 2-complex $S$ is said to be {\it pure} if each edge $e$ of $S$ is contained in at least one 2-simplex. 

\begin{corollary}\label{b2=0}
Any closed strongly connected 2-dimensional  admissible simplicial  complex $S$ with $b_2(S)=0$ is either a triangulation of the real projective plane $P^2$ or the quotient $Q^2$ of a triangulation of $P^2$ obtained by identifying two adjacent edges.
\end{corollary}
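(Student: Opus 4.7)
The plan is to first use admissibility to force $\chi(S) = 1$ and pin down the possible values of $L(S)$. Applying the final statement of Lemma \ref{mu1 or mu2} to $S$ itself (valid since $S$ is closed, so $L(S) \le 0$), one obtains $\mu_1(S) > 1/3$. Formula (\ref{eq7}) then gives $3\chi(S) + L(S) > 0$, whence $\chi(S) \ge 1$; combined with $b_0(S) = 1$ (strong connectedness) and $b_2(S) = 0$, one has $\chi(S) = 1$, $b_1(S) = 0$, and $L(S) \in \{0,-1,-2\}$.

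Next I would restrict $L(S)$ further via a handshake argument on vertex links: in $\lk(w)$, the degree of each vertex is $\deg_S$ of the corresponding edge of $S$, so the number of $\deg_S$-odd edges incident to $w$ is always even. This rules out $L(S) = -1$ (a single $\deg 3$ edge gives one odd edge at each of its two endpoints) and the $L(S) = -2$ variant with two $\deg 3$ edges (they would have to share both endpoints, impossible in a simplicial complex). So either $L(S) = 0$ and $S$ is a closed pseudomanifold, or $L(S) = -2$ and $S$ has a unique edge $e_0$ of degree $4$.

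In the $L = 0$ case, normalize $S$ by splitting each vertex $v$ whose link has $c(v) \ge 2$ cycles into $c(v)$ copies. The result $\widetilde S$ is a closed $2$-manifold, still strongly connected since the dual graph is unchanged, hence a connected closed surface with $\chi(\widetilde S) = 1 + \sum_v (c(v)-1) \ge 1$, so $\widetilde S \in \{P^2, S^2\}$. If $\widetilde S = S^2$, then $S$ is $S^2$ with two vertices identified, hence $S \simeq S^2 \vee S^1$ with $b_2(S) = 1$, a contradiction. Therefore $\widetilde S = P^2$, every $c(v) = 1$, and $S = P^2$.

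In the $L = -2$ case, write $e_0 = uv$ and $T_1, \ldots, T_4$ for the surrounding triangles. In $\lk(u)$, $v$ has degree $4$ and all other vertices have degree $2$, so the component of $\lk(u)$ containing $v$ is a figure-8 whose two loops partition $\{T_1,\ldots,T_4\}$ into two pairs. Let $\widetilde S$ be the un-identification obtained by splitting $u$ into $u',u''$ along this partition and splitting $e_0$ into $e_0' = u'v$ and $e_0'' = u''v$. Then every edge of $\widetilde S$ has degree $2$, and $\chi(\widetilde S) = \chi(S) + 1 - 1 = 1$. Since the boundary matrix $\partial_2^S$ is obtained from $\partial_2^{\widetilde S}$ by summing the rows indexed by $e_0'$ and $e_0''$, one has $\ker \partial_2^{\widetilde S} \subset \ker \partial_2^S$ and hence $b_2(\widetilde S) \le b_2(S) = 0$. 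Applying the previous paragraph to $\widetilde S$ gives $\widetilde S = P^2$, and then $S$ is recovered from $P^2$ by identifying the two adjacent edges $e_0'$ and $e_0''$, giving $S = Q^2$ by definition. The main obstacle in this last case is to verify that $\widetilde S$ is itself a closed $2$-manifold (no residual pinch points at $v$ or elsewhere) to which the preceding paragraph directly applies; this requires delicate analysis of the figure-8 pairings at both endpoints of $e_0$ together with the interplay with $b_2 = 0$, since otherwise the further normalization could produce a disconnected closed surface and complicate the reconstruction of $S$.
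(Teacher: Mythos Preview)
Your approach differs substantially from the paper's. The paper gives a two-line proof: since $S$ is closed one has $L(S)\le 0$, so Lemma~\ref{mu1 or mu2} yields $\mu_1(S)>1/3$, and then Lemma~5.1 of \cite{CF2} (a classification result for closed strongly connected $2$-complexes with $b_2=0$ and $\mu_1>1/3$) gives the conclusion directly. You instead attempt a self-contained combinatorial classification, essentially reproving that external lemma. Your reduction to $\chi(S)=1$, $b_1(S)=0$, and $L(S)\in\{0,-2\}$ via the handshake parity argument is correct, and your treatment of the $L=0$ case is sound (in fact, since $b_1(S)=0$ and $S$ is strongly connected and pure, every vertex link is already connected, so the normalisation is unnecessary and $S$ is immediately a closed surface with $\chi=1$, hence $P^2$).

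The gap you flag in the $L=-2$ case is genuine and is precisely the content buried in the cited lemma from \cite{CF2}. After splitting $u$ along the figure-eight, the complex $\widetilde S$ need not be a manifold at $v$: the figure-eight in $\lk_S(v)$ induces its own partition of $\{T_1,\dots,T_4\}$ into pairs, and if this partition \emph{agrees} with the one at $u$, then $\lk_{\widetilde S}(v)$ consists of two disjoint cycles rather than one. Further normalising at $v$ then produces a closed surface $\widehat S$ with $\chi(\widehat S)=2$ and $b_2(\widehat S)=0$, which forces $\widehat S=P^2\sqcup P^2$; reassembling, $S$ would be two projective planes glued along the single edge $e_0$. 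This configuration is closed, strongly connected, has $\chi=1$, $b_1=0$, $b_2=0$, and $L=-2$, so none of the invariants you have extracted so far exclude it. Ruling it out (or showing that in this situation the partitions at $u$ and $v$ must disagree) requires using admissibility of the \emph{subcomplexes} of $S$, not just the numerics of $S$ itself --- exactly the ``interplay with $b_2=0$'' you allude to. Without that step your argument is incomplete; the paper avoids the issue entirely by outsourcing it to \cite{CF2}.
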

\begin{proof}
By Lemma \ref{mu1 or mu2} we have that $\mu_1(S)>1/3.$
Lemma 5.1 
of \cite{CF2} implies that $S$ is a triangulated projective plane $P^2$ or the quotient of a triangulation of $P^2$ obtained by identifying two adjacent edges.
\end{proof}

\begin{definition}
A finite simplicial 2-complex $Z$ is said to be a minimal cycle if $b_2(Z)=1$ and for any proper subcomplex $Z'\subsetneq Z$ one has $b_2(Z')=0$.
\end{definition}

\begin{definition}
A minimal cycle $Z$ is said to be of type A if it does not contain closed proper subcomplexes. 
Otherwise $Z$ is said to be of type B.
\end{definition}

\begin{example}\label{ex1} \rm 
Examples of minimal cycles are:

\begin{enumerate}

\item  A triangulation of the sphere $S^2$ is an admissible minimal cycle of type A, see Example \ref{exsurface}; 

\item One also obtains an admissible minimal cycle of type A by 
starting from a triangulation of $S^2$ and identifying two vertices. However, if one identifies more than two vertices the obtained minimal cycle is not admissible. 

\item An admissible minimal cycle of type A is obtained from a triangulation of $S^2$ by identifying two adjacent edges. 

\item An example of a minimal cycle of type B is given by the union $Z= P^2\cup D^2$ where $P^2$ is a triangulation of the real projective plane and $D^2$ is a triangulated disc such that $P^2\cap D^2=\partial D^2$ is a non-contractible simple closed loop on $P^2$. This minimal cycle is admissible if $f_1(\partial D^2)\le 5$. 

\item Consider the union $Z=P^2\cup P^2$ of two real projective planes where the intersection $P^2\cap P^2$ is a 
loop non-contractible in each of the projective planes. $Z$ 
is a  minimal cycle of type B which is not admissible
\end{enumerate}
\end{example}

\begin{remark} \label{23} \rm One can easily see that minimal cycles are closed, strongly connected simplicial complexes. Hence, for any minimal cycle $Z$ one has $L(Z)\le 0$; besides, $\chi(Z)\le 2$. 
Using Lemma \ref{mu1 or mu2} we find that every admissible minimal cycle $Z$ must satisfy $3\chi(Z)+L(Z)>0$ implying that
\begin{align}
1\leq \chi(Z)\leq 2\quad \textrm{and}\quad -5\leq L(Z)\leq 0.
\end{align}
\end{remark}

\begin{lemma}\label{typeA}
Any admissible minimal cycle $Z$ of type A is homotopy equivalent to either $S^2$ or to $S^2\vee S^1$. Moreover, for every 2-simplex $\sigma\subset Z$ the boundary $\partial \sigma$ is null-homotopic in $Z-\sigma$.
\end{lemma}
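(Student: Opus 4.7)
The plan is to determine the homotopy type of $Z$ via a normalization map $p:\tilde Z\to Z$ whose source turns out to be a triangulated $S^2$; the moreover statement then follows by lifting $\partial\sigma$ through $p$. From Remark~\ref{23}, $Z$ is closed and strongly connected with $1\le\chi(Z)\le 2$, and since $Z$ is connected with $b_2(Z)=1$ we have $b_1(Z)=2-\chi(Z)\in\{0,1\}$, which will govern the final case analysis.

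I would construct $p:\tilde Z\to Z$ by undoing non-manifold identifications. For each edge $e\subset Z$ of degree $d\ge 2$, pair up the $d$ incident $2$-simplices consistently with the signs of an integral generator of $H_2(Z)=\mathbb Z$, and replace $e$ by $d/2$ edges in $\tilde Z$ of degree exactly $2$. For each vertex $v\subset Z$ whose link $\operatorname{Lk}(v)$ has $k$ connected components, replace $v$ by $k$ preimages in $\tilde Z$. The resulting $\tilde Z$ has every edge of degree $2$ and every vertex link connected, so it is a disjoint union of closed surfaces. The type A hypothesis is essential here: any proper component of $\tilde Z$ would push forward under $p$ to a proper closed subcomplex of $Z$, contradicting type A, so $\tilde Z$ is a single closed surface. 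The surjection $p_*:H_2(\tilde Z)\to H_2(Z)=\mathbb Z$ forces $\tilde Z$ to be orientable, and combined with $\chi(\tilde Z)\ge\chi(Z)\ge 1$ this yields $\tilde Z\cong S^2$. The map $p$ is then realized as a sequence of vertex identifications and adjacent-edge identifications of the $2$-sphere: each adjacent-edge identification amounts to contracting a tree, preserving both $\chi$ and the homotopy type $S^2$, while a vertex identification either contracts a tree (when the two vertices are joined by a path already collapsed) or reduces $\chi$ by $1$ and wedges on an $S^1$. Since $\chi(Z)\in\{1,2\}$, at most one homotopy-changing identification can occur, and so $Z\simeq S^2$ if $\chi(Z)=2$ and $Z\simeq S^2\vee S^1$ if $\chi(Z)=1$.

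For the moreover statement, observe that $p$ is a bijection on open $2$-simplices, so each $\sigma\subset Z$ has a unique preimage $\tilde\sigma\subset\tilde Z=S^2$, and $p$ restricts to a homeomorphism $\tilde\sigma\to\sigma$. In particular $p$ further restricts to a continuous surjection $p:\tilde Z\setminus\operatorname{int}(\tilde\sigma)\to Z\setminus\operatorname{int}(\sigma)$ whose source is homeomorphic to $D^2$. A null-homotopy of $\partial\tilde\sigma$ in $D^2$, composed with $p$, then yields a null-homotopy of $\partial\sigma$ in $Z\setminus\sigma$.

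The main obstacle is the normalization step: verifying that the sign pairing at edges of degree $>2$ can be made consistently (so that $\tilde Z$ is a closed surface carrying a fundamental class mapping onto the generator of $H_2(Z)$), and that type A indeed forces $\tilde Z$ to be connected. Once $\tilde Z\cong S^2$ is in hand, the homotopy classification reduces to Euler-characteristic bookkeeping and the moreover to a direct lift through $p$.
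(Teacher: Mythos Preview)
Your normalization strategy is the right idea and, once made rigorous, essentially reproduces the content of Lemma~5.6 in \cite{CF2}, which is what the paper invokes. The paper's own proof is a one-liner: since $Z$ is closed one has $L(Z)\le 0$, Lemma~\ref{mu1 or mu2} gives $\mu_1(Z)>1/3$, and then the external lemma is quoted. So your attempt is more self-contained rather than genuinely different.

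There is, however, a real gap. Your construction of $\tilde Z$ assumes implicitly that the integral generator $c=\sum a_\sigma\sigma$ of $H_2(Z)$ has $|a_\sigma|=1$ for every $\sigma$: only then can you ``pair up the $d$ incident $2$-simplices'' (you need $d$ even with exactly $d/2$ of each sign), and only then is $p$ a bijection on open $2$-simplices, which you use crucially both for connectedness of $\tilde Z$ and in the ``moreover'' argument. You have not proved $|a_\sigma|=1$. This can be done: reducing $c$ mod~$2$ and using type~A shows every $a_\sigma$ is odd and every edge degree is even, and then a short argument with relative $H_2$ of the degree-$2$ strongly-connected pieces (using again that type~A forbids proper closed subcomplexes) rules out any $|a_\sigma|>1$. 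But this step is missing from your write-up and is exactly the ``main obstacle'' you flag without resolving.

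A second, smaller issue: your deduction of the homotopy type by factoring $p$ as ``vertex identifications and adjacent-edge identifications'' is loose. The edges of $\tilde Z$ lying over a fixed high-degree edge of $Z$ need not share a vertex in $\tilde Z$ (their endpoints lie in fibres $p^{-1}(u),p^{-1}(v)$, which may each contain several points), so the identifications are not a priori ``adjacent''; and even when they are, such an identification is not literally a tree contraction. A cleaner route is to prove the ``moreover'' first (your lifting argument through $p$ is correct once $|a_\sigma|=1$ is established), and then observe that $Z\setminus\mathrm{int}(\sigma)$ collapses to a graph because type~A forbids closed proper $2$-subcomplexes; since $\chi(Z\setminus\mathrm{int}(\sigma))=\chi(Z)-1\in\{0,1\}$ this graph is a point or a circle, giving $Z\simeq S^2$ or $S^2\vee S^1$ directly.
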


\begin{proof} 
Suppose $Z$ is an admissible minimal cycle of type A. Since $Z$ is closed, one has $L(Z)\leq 0$. Using the second part of 
Lemma \ref{mu1 or mu2}, we obtain
$\mu_1(Z)>1/3$. Now we may apply Lemma 5.6 from \cite{CF2}. Note that the statement of Lemma 5.6 from \cite{CF2} 
requires that $\mu_1(T)>1/3$ for any subcomplex $T\subset Z$; however the proof presented in \cite{CF2} uses only 
the assumption $\mu_1(Z)>1/3$. 
%
\end{proof}

Next we establish the following simple fact about minimal cycles of type B which strengthens Remark \ref{23}. 

\begin{lemma}\label{proper}
Every admissible minimal cycle $Z$ of type $B$ satisfies $\chi(Z)=2$ and $-5\le L(Z)\leq -3$.
\end{lemma}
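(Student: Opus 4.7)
My plan is to exploit the closed proper subcomplex $W\subsetneq Z$ provided by the type B hypothesis: passing to its complementary piece $N$ and using minimality of $Z$, I will produce a loop inside $W\cap N$ on which each edge is forced to have degree at least three in $Z$. This will give $L(Z)\leq -3$, and admissibility will then pin down $\chi(Z)=2$ and $L(Z)\geq -5$.

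To set this up, I fix a closed proper subcomplex $W\subsetneq Z$ and let $N$ denote the subcomplex of $Z$ generated by those $2$-simplices of $Z$ that do \emph{not} lie in $W$, together with their faces. Any minimal cycle is pure: an edge or a vertex of $Z$ not carried by any $2$-simplex could be deleted to yield a proper subcomplex with the same $H_2$, contradicting minimality. In particular $W$ carries at least one $2$-simplex, and hence $N\subsetneq Z$ is also proper, so by minimality $b_2(W)=b_2(N)=0$. By construction $W\cap N$ contains no $2$-simplex, so it is a $1$-dimensional subcomplex of $Z$.

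The main step, and the one I expect to be the main obstacle, is to show that $W\cap N$ contains a simplicial loop. For this I take the fundamental $2$-cycle $[Z]\in Z_2(Z;\mathbb{Q})$ and split it as $[Z]=[Z]_W+[Z]_N$ according as each supporting $2$-simplex lies in $W$ or in $N$. Neither summand can vanish, for otherwise the other would be a nonzero $2$-cycle inside a subcomplex with vanishing $b_2$. Since $b_2(W)=0$, the nonzero chain $[Z]_W$ is not itself a cycle, so $\partial[Z]_W\neq 0$; and since $\partial[Z]_W+\partial[Z]_N=0$, its support lies in the set of edges contained in both $W$ and $N$, i.e., in $W\cap N$. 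Thus the $1$-complex $W\cap N$ supports a nonzero rational $1$-cycle and therefore must contain a simplicial loop, which uses at least three edges.

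Finally, for each edge $e\in W\cap N$, closedness of $W$ gives $\deg_W e\geq 2$ while $e$ lying in $N$ gives $\deg_N e\geq 1$, so $\deg_Z e\geq 3$ and $e$ contributes at most $-1$ to $L(Z)$; all remaining edges contribute nonpositively because $Z$ is closed, giving $L(Z)\leq -3$. Admissibility of $Z$ together with $L(Z)\leq 0$ and Lemma \ref{mu1 or mu2} yields $3\chi(Z)+L(Z)>0$. Combined with $L(Z)\leq -3$, this forces $\chi(Z)>1$, and Remark \ref{23} then pins $\chi(Z)=2$; substituting back gives $L(Z)>-6$, hence $L(Z)\geq -5$, completing the argument.
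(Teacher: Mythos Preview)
Your proof is correct and follows essentially the same route as the paper's: use the closed proper subcomplex and its complementary piece to show the intersection graph contains a loop, deduce at least three edges of degree~$\ge 3$ so that $L(Z)\le -3$, and then invoke admissibility (via Lemma~\ref{mu1 or mu2}) together with $\chi(Z)\le 2$ to pin down $\chi(Z)=2$ and $L(Z)\ge -5$. The only cosmetic difference is that the paper phrases the ``intersection graph has a cycle'' step via Mayer--Vietoris, whereas you unpack the same argument at the chain level by splitting the fundamental class---your version is slightly more self-contained.
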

\begin{proof}
Let $Z'$ be a proper closed subcomplex of $Z$. 
If the graph $\Gamma= Z'\cap (\overline{Z-Z'})$ has no cycles, then
$$b_2(Z) = b_2(Z') + b_2(\overline{Z-Z'})$$
 and either $b_2(Z')=1$ or $b_2(\overline{Z-Z'})=1$; either of these possibilities contradicts the minimality of $Z$. 
 Hence the graph $\Gamma$ must contain a cycle and in particular, the number of edges of $\Gamma$ satisfies $f_1(\Gamma)\geq 3$.
Each edge of $Z'$ is incident to at least two faces of $Z'$ and every edge of $\Gamma\subset Z'$  is incident to at least one face of $Z$ which is not in $Z'$. Since $f_1(\Gamma)\geq 3$ and $Z$ is closed it follows that $L(Z)\leq -3$.
Since $Z$ is admissible, by Lemma \ref{mu1 or mu2} we obtain $0<3\chi(Z)+L(Z)\leq 3\chi(Z)-3$. In particular $\chi(Z)>1$. Since $b_0(Z)=b_2(Z)=1$ we obtain $\chi(Z)=2$, as claimed.
\end{proof}

\begin{definition}
Let $Z$ be an admissible minimal cycle of type B. Any closed strongly connected proper subcomplex $Z_0\subset Z$ is called a core of $Z$.
\end{definition}

Clearly, for any core $Z_0\subset Z$ one has $b_2(Z_0)=0$ (by minimality). Applying Corollary \ref{b2=0} we see that any core is homeomorphic either to $P^2$ or to $P^2$ with two adjacent edges identified. 

\begin{lemma}\label{unique} An admissible minimal cycle $Z$ of type B has a unique core $Z_0\subset Z$.
\end{lemma}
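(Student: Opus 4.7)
The plan is to argue by contradiction: suppose $Z$ has two distinct cores $Z_0 \neq Z_1$. By Corollary \ref{b2=0} each $Z_i$ is a triangulation of $P^2$ or of $Q^2$. A preliminary observation is that $P^2$ and $Q^2$ contain no proper closed pure 2-subcomplex: every edge of such a surface has even degree in the surface, so in any closed sub-2-complex the membership of 2-simplices must propagate through the connected dual graph to cover the whole surface. In particular neither $Z_0 \subseteq Z_1$ nor $Z_1 \subseteq Z_0$.

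First suppose that $Z_0 \cap Z_1$ contains no shared 2-simplex. Then at every edge $e \in Z_0 \cap Z_1$ the faces of $Z_0$ and of $Z_1$ containing $e$ are disjoint, so $\deg_Z(e) \geq \deg_{Z_0}(e) + \deg_{Z_1}(e) \geq 4$. Arguing as in the proof of Lemma \ref{proper}, each boundary graph $\Gamma_i = Z_i \cap \overline{Z - Z_i}$ contains a simple cycle $\gamma_i$ of length $m_i \geq 3$, whose edges all have $\deg_Z \geq 3$; in addition, the edges of $\gamma_0 \cap \gamma_1$ lie in $Z_0 \cap Z_1$ and therefore have $\deg_Z \geq 4$. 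With $x = |\gamma_0 \cap \gamma_1|$, summing the contributions to $L(Z)$ yields
$$L(Z) \leq -(m_0-x) - (m_1-x) - 2x = -(m_0 + m_1) \leq -6,$$
contradicting the bound $L(Z) \geq -5$ from Lemma \ref{proper}.

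Now suppose $Z_0 \cap Z_1$ contains a shared 2-simplex. Then $Z_0 \cup Z_1$ is closed and strongly connected (the shared 2-simplex links their dual graphs) and inherits admissibility from $Z$. If $Z_0 \cup Z_1 \subsetneq Z$, minimality of $Z$ gives $b_2(Z_0 \cup Z_1) = 0$, so Corollary \ref{b2=0} forces $Z_0 \cup Z_1$ to be $P^2$ or $Q^2$; then $Z_0$ and $Z_1$ are closed sub-2-complexes of $Z_0 \cup Z_1$, so each equals $Z_0 \cup Z_1$ by the preliminary observation, contradicting $Z_0 \neq Z_1$. If instead $Z_0 \cup Z_1 = Z$, Mayer--Vietoris over $\Q$ (using $H_1(Z_i;\Q) = 0$) gives $H_2(Z;\Q) \cong H_1(Z_0 \cap Z_1;\Q)$, and the generator can be written as $c = d_0 - d_1$, where $d_i \in C_2(Z_i;\Q)$ is the unique bounding chain of a 1-cycle $\gamma$ representing the generator. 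In $P^2$ and $Q^2$ such a $d_i$ is either a local disc chain $c_{D_i}$ (if $\gamma$ is null-homotopic in $Z_i$) or the half-sum $\tfrac{1}{2} c_{Z_i}$ (if $\gamma$ is non-contractible). A case analysis on the pair of homotopy types of $\gamma$ in $Z_0$ and $Z_1$ then shows that $c$ must vanish on some 2-simplex of $Z$, either on $Z_0 \cap Z_1$ (when the bounding chains happen to agree on the shared part) or on some 2-simplex of $Z_i \setminus Z_{1-i}$ (when they do not); this contradicts the fact that minimality of $Z$ forces $c$ to be non-zero on every 2-simplex.

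The main obstacle is this last subcase ($Z_0 \cup Z_1 = Z$): the uniquely determined bounding chains $d_0$ and $d_1$ must be compared on the shared 2-simplex part separately for each pairing of the homotopy types (null-homotopic/null-homotopic, mixed, non-contractible/non-contractible) of $\gamma$ in $Z_0$ and $Z_1$. In every pairing the explicit formula for $d_i$ singles out a 2-simplex where $c$ vanishes, and minimality of $Z$ then delivers the contradiction.
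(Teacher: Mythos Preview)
Your treatment of Case~1 (no shared 2-simplex) and Subcase~2a ($Z_0\cup Z_1\subsetneq Z$) is correct and essentially parallels the paper's reasoning.

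The real gap is Subcase~2b. You write the generator of $H_2(Z;\Q)$ as $c=d_0-d_1$ via Mayer--Vietoris and then \emph{assert} that a case analysis on the homotopy type of $\gamma$ in each $Z_i$ forces $c$ to vanish on some 2-simplex. But this case analysis is never carried out, and your description of the bounding chains is too coarse to support it. The expression ``$\tfrac12 c_{Z_i}$'' is ill-defined for a non-orientable surface (there is no canonical sum-of-simplices chain over $\Q$); even if one grants a simple representative for $\gamma$ and interprets $d_i$ as having coefficients $\pm\tfrac12$ on every 2-simplex of $Z_i$, the \emph{signs} on the shared 2-simplices of $Z_0\cap Z_1$ are not controlled, so nothing prevents $d_0-d_1$ from being everywhere nonzero there. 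More fundamentally, the chain $c=d_0-d_1$ you construct is by Mayer--Vietoris \emph{automatically} a generator of $H_2(Z;\Q)$, and minimality of $Z$ says precisely that any such generator has full support. So the ``contradiction'' you are aiming for is the very statement whose failure must be forced by \emph{additional} structural input, and that input is missing from your sketch.

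The paper sidesteps this trap by working with a finer decomposition. Writing $S=Z'\cap Z''$, it applies Mayer--Vietoris to $Z'=S\cup\overline{Z'-S}$ and $Z''=S\cup\overline{Z''-S}$ (rather than to $Z=Z'\cup Z''$), establishes $b_1(\overline{Z'-S})=b_1(\overline{Z''-S})=0$, and then via a careful count of high-degree edges and a link-parity argument shows that the boundary graphs $\Gamma'$ and $\Gamma''$ must share a common cycle $C$. Some integer multiple of $C$ then bounds a chain in $\overline{Z'-S}$ and a chain in $\overline{Z''-S}$; since these two pieces have \emph{no common 2-simplices}, the difference of the bounding chains is a nonzero 2-cycle supported in the proper subcomplex obtained from $Z$ by deleting the interiors of the 2-simplices of $S$, contradicting minimality directly. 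The point is that by arranging the bounding chains to live on 2-simplex-disjoint pieces, no comparison on an overlap is ever needed.
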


\begin{proof} Let $Z$ be an admissible minimal cycle of type B. Let us assume that $Z$ has two distinct cores $Z', Z''\subset Z$. 

Consider the graphs
$$\Gamma'=Z'\cap \overline{Z-Z'}, \quad \Gamma''=Z''\cap \overline{Z-Z''}.$$
By the arguments used in the proof of Lemma \ref{proper} we obtain that $f_1(\Gamma')\ge 3$ and $f_1(\Gamma'')\ge 3$. 
The graphs $\Gamma'$ and $\Gamma''$ cannot be edge-disjoint since otherwise $Z$ 
would have at least 6 edges of degree $\ge 3$; the latter would give $L(Z) \le -6$ contradicting Lemma \ref{proper}.
This implies that $f_1(Z'\cap Z'')>0$ and therefore the union $Z'\cup Z''$ is strongly connected. 

We observe next that the union $Z'\cup Z''$ must coincide with $Z$. Indeed, if $Z'\cup Z''\not=Z$ then by minimality 
$b_2(Z'\cup Z'')=0$ and by Corollary \ref{b2=0} the union $Z'\cup Z''$ is either homeomorphic to $P^2$ or to the quotient $Q^2$ 
of $P^2$ 
where two adjacent edges are identified. But neither $P^2$ nor $Q^2$ admits a triangulation in which it is a union of two distinct closed proper subcomplexes. 
Here we use our assumption $Z'\not= Z''$. 

Now we may show that 
$$\Gamma'= S\cap \overline{Z''-S}, \quad \Gamma'' = S\cap \overline{Z'-S},$$
where $$S=Z'\cap Z''$$ is the intersection. Indeed, since $Z=Z'\cup Z''$ one obtains $Z-Z'=Z''-S$ and 
\begin{eqnarray*}
\Gamma'&=& Z'\cap \overline{Z-Z'} \\ &= &Z'\cap \overline{Z''-S}\\
&=& \left[S\cap \overline{Z''-S}\right]\cup \left[(Z'-S)\cap \overline{Z''-S}\right]\\
&=& S\cap \overline{Z''-S}.
\end{eqnarray*}
On the last step we used the observation $(Z'-S)\cap \overline{Z''-S}\subset (Z'-S)\cap Z''=\emptyset.$ The statement regarding $\Gamma''$ follows similarly.

We know that $\chi(Z'\cup Z'')=2$ and $\chi(Z')=\chi(Z'')=1$; therefore $\chi(S)=0$. 
If $S$ is disconnected then $b_1(Z) = b_1(Z'\cup Z'')\ge 1$ contradicting $\chi(Z)=2$. Hence we obtain 
\begin{eqnarray}\label{equal1} b_0(S)=b_1(S)=1.
\end{eqnarray}

Suppose that the intersection $S$ has no 2-faces. Then $S$ is a connected graph and every edge of $S$ has degree $\ge 4$ in $Z$ 
since it is incident to at least two faces of $Z'$ and two faces of $Z''$. 
However, since $b_1(S)=1$ we have $f_1(S)\ge 3$ implying that $L(Z)\le -2f_1(S)\le -6$; this contradicts $L(Z)\ge -5$, see above. 

Thus we see that $f_2(S)\ge 1$, i.e. the complex $S$ is 2-dimensional.

Note that $b_1(Z')=b_2(Z')=0$ and $b_1(Z'')=b_2(Z'')=0$, see Corollary \ref{b2=0}.
The Mayer-Vietoris exact sequences with rational coefficients for the covers $Z'=S\cup\overline{Z'-S}$ and $Z''=S\cup\overline{Z''-S}$ give the isomorphims
\begin{eqnarray}
\label{iso1}H_1(\Gamma';\mathbb Q)&\cong& H_1(S;\mathbb Q)\oplus H_1(\overline{Z''-S};\mathbb Q),\\
\label{iso2}H_1(\Gamma'';\mathbb Q)&\cong& H_1(S;\mathbb Q) \oplus H_1(\overline{Z'-S};\mathbb Q).
\end{eqnarray}
Since $H_1(S;\mathbb Q)=\mathbb Q$ we obtain from (\ref{iso1}), (\ref{iso2}):
\begin{eqnarray}\label{b11}
b_1(\Gamma'),\,  b_1(\Gamma'')\ge 1.
\end{eqnarray}

In the beginning of the proof we have observed that every edge of $\Gamma'$ and of $\Gamma''$ has degree $\ge 3$ in $Z$ and that $\Gamma'$ and $\Gamma''$ cannot be edge-disjoint. 
Hence $f_1(\Gamma'\cup\Gamma'')\le 5$ and therefore $b_1(\Gamma'\cup\Gamma'')\le 2$ since any graph on at most five edges has at most 2 independent cycles. Let us show that the case $b_1(\Gamma'\cup\Gamma'')= 2$ is impossible. 
If $b_1(\Gamma'\cup\Gamma'')=2$ then $\Gamma'\cup \Gamma''$ is a square with one diagonal, it has 5 edges and each of the edges has degree 3 in $Z$
(since $L(Z)\ge -5$, see above).
Moreover, in this case all other edges of $Z$ have degree 2 (as again follows from $L(Z)\ge -5$). 
Let $v$ be one of the vertices of degree $3$ in the graph $\Gamma'\cup\Gamma''$. Then $v$ is incident to exactly three odd degree edges in $Z$ (see similar argument in \cite{CF2}, proof of Lemma 5.7 on page 15). 
All edges of $Z$ incident to $v$ which do not belong to 
$\Gamma' \cup \Gamma''$ have degree 2 in $Z$. In particular the link ${\rm Lk}_Z(v)$ of $v$ in $Z$ would be a graph with an odd number of odd degree vertices which is impossible. Therefore, we obtain that 
\begin{eqnarray}\label{b12}
b_1(\Gamma'\cup \Gamma'')=1
\end{eqnarray}

From (\ref{b11}) and (\ref{b12}) it follows that 
\begin{eqnarray}
b_1(\Gamma'\cap \Gamma'') \, =\,  b_1(\Gamma')\, =\, b_1(\Gamma'') \, =\, 1
\end{eqnarray} 
i.e. the graphs $\Gamma'$ and $\Gamma''$ possess a common cycle $C$. 
The isomorphisms (\ref{iso1}) and (\ref{iso2}) give
$$b_1(\overline{Z'-S})=b_1(\overline{Z''-S})=0.$$
We obtain that some integral multiple of the cycle $C$ bounds a $\Z$-chains in $\overline{Z'-S}$ and in $\overline{Z''-S}$ 
and the difference of these two chains will be a non-trivial 2-dimensional cycle $c$ lying in $Z-S'\not=Z$ contradicting the minimality of $Z$. 
Here $S'$ denotes the union of interiors of all 2-simplexes of $S$. 
Note that the complexes $\overline{Z'-S}$, $\overline{Z''-S}$ and $S$ have no common 2-simplexes. This completes the proof.

\end{proof}


\begin{lemma} \label{typeB} Any admissible minimal cycle $Z$ of type B is homotopy equivalent to the sphere $S^2$. 
Let $Z_0\subset Z$ denote the core of $Z$. Then for any 2-simplex $\sigma \subset Z_0$ the complement $Z-\int(\sigma)$ is contractible. 
\end{lemma}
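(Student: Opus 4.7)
The overall plan is to establish $Z\simeq S^2$ by proving $\pi_1(Z)=0$ and $H_2(Z;\mathbb Z)\cong\mathbb Z$, then deduce contractibility of $Z-\mathrm{int}(\sigma)$ for $\sigma\subset Z_0$.

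First I would set up notation: decompose $Z=Z_0\cup A$ where $Z_0$ is the core, unique by Lemma \ref{unique}, $A=\overline{Z-Z_0}$, and $\Gamma=Z_0\cap A$ is a graph. From Lemma \ref{proper} we have $\chi(Z)=2$, which together with $b_0(Z)=b_2(Z)=1$ forces $b_1(Z)=0$; since $Z$ is $2$-dimensional, $H_2(Z;\mathbb Z)$ is free, so $H_2(Z;\mathbb Z)\cong \mathbb Z$. By Corollary \ref{b2=0}, $Z_0$ is a triangulated $P^2$ or $Q^2$, in particular $\pi_1(Z_0)\cong \mathbb Z/2$.

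Next I would prove $\pi_1(Z)=0$. By van Kampen, $\pi_1(Z)=\pi_1(Z_0)*_{\pi_1(\Gamma)}\pi_1(A)$, so I must kill the generator of $\pi_1(Z_0)=\mathbb Z/2$. The plan is to exhibit a loop $\gamma\subset\Gamma$ such that (i) the inclusion $\Gamma\hookrightarrow Z_0$ sends $\gamma$ to the nontrivial element of $\pi_1(Z_0)$, and (ii) $\gamma$ is null-homotopic in $A$. For (i), the Mayer--Vietoris sequence combined with $H_1(Z;\mathbb Z)=0$ and $H_2(Z_0)=H_2(\Gamma)=0$ forces $H_1(\Gamma;\mathbb Z)\to H_1(Z_0;\mathbb Z)=\mathbb Z/2$ to be surjective; tracing the generator of $H_2(Z;\mathbb Z)$, whose restriction to $Z_0$ is a 2-chain with boundary in $\Gamma$ representing twice the generator of $\pi_1(Z_0)$, exhibits the required $\gamma$ at the level of $H_1$. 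For (ii), I would use that $A$ is itself admissible (admissibility is hereditary) and satisfies $b_2(A)=0$ by minimality of $Z$; the structural analysis of admissible 2-complexes with vanishing $H_2$ (the same machinery underlying Lemma \ref{typeA} and Lemma 5.6 of \cite{CF2}) then promotes the null-homology of $\gamma$ in $A$ to a null-homotopy. Given $\pi_1(Z)=0$ and $H_2(Z;\mathbb Z)\cong\mathbb Z$, Hurewicz yields $\pi_2(Z)\cong \mathbb Z$, and a generating map $S^2\to Z$ induces an isomorphism on homology; by Whitehead's theorem it is a homotopy equivalence.

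For the second claim, fix $\sigma\subset Z_0$ and let $Z'=Z-\mathrm{int}(\sigma)$. By minimality the unique 2-cycle of $Z$ has nonzero coefficient on $\sigma$, so every 2-cycle of $Z'$ would give a 2-cycle of $Z$ not involving $\sigma$; hence $H_2(Z';\mathbb Z)=0$. Van Kampen for $Z=Z'\cup\bar\sigma$ gives $\pi_1(Z)=\pi_1(Z')/\langle\!\langle[\partial\sigma]\rangle\!\rangle$, so $\pi_1(Z')$ is normally generated by $[\partial\sigma]$. The complementary 2-chain of the unique integer 2-cycle restricts to a chain on $Z'$ whose boundary is (a multiple of) $\partial\sigma$, and the admissibility-driven arguments as in Lemma \ref{typeA} promote this to a simplicial filling, showing $[\partial\sigma]$ is null-homotopic in $Z'$. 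Hence $\pi_1(Z')=0$, and since $Z'$ is a simply connected 2-complex with $H_2=0$, Hurewicz together with Whitehead give that $Z'$ is contractible.

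The main obstacle is step (ii): promoting the null-homology of $\gamma$ in $A$ to a null-homotopy. Without this, van Kampen only controls the abelianization $H_1(Z)$, not $\pi_1(Z)$. The key input must be admissibility of $A$ (inherited from $Z$) together with $b_2(A)=0$, which via the structural results on admissible complexes with few 2-cycles forces $\pi_1(A)$ to be well-enough behaved for the Hurewicz-type conclusion in $A$.
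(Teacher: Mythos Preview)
Your approach has a genuine gap, and it is precisely the one you flagged yourself. The step ``promote the null-homology of $\gamma$ in $A$ to a null-homotopy'' does not follow from admissibility or from the machinery behind Lemma~\ref{typeA}. Even granting that $A$ is admissible with $b_2(A)=0$, the structural results only tell you that $A$ is homotopy equivalent to a wedge of circles and projective planes (in fact $A$ collapses to a graph, since any closed strongly connected $2$-subcomplex of $A$ would be a second core of $Z$, contradicting Lemma~\ref{unique}). But in a free group, null-homologous certainly does not imply null-homotopic, so the van~Kampen argument as written cannot conclude $\pi_1(Z)=0$. The same defect recurs in your treatment of $Z'=Z-\mathrm{int}(\sigma)$: knowing that $\partial\sigma$ bounds a $2$-chain in $Z'$ does not give a disc filling, and no lemma in the paper supplies such a promotion.

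The paper's proof is entirely different and much shorter; it reverses the order of the two statements. One shows \emph{first} that $Z-\mathrm{int}(\sigma)$ is contractible for $\sigma\subset Z_0$, and then deduces $Z\simeq S^2$ by reattaching the cell. The contractibility is obtained by simplicial collapsing: perform all possible elementary collapses on $Z-\mathrm{int}(\sigma)$. If this process terminated with $2$-simplices remaining, the residual pure part would be closed, and each of its strongly connected components would (by Corollary~\ref{b2=0}, using $b_2=0$) be a core of $Z$; but $\sigma$ has been removed from the unique core $Z_0$, so no core survives --- contradiction. Hence $Z-\mathrm{int}(\sigma)$ collapses to a graph $G$. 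Since $\chi(G)=\chi(Z-\mathrm{int}(\sigma))=\chi(Z)-1=1$ by Lemma~\ref{proper}, the graph $G$ is a tree and $Z-\mathrm{int}(\sigma)$ is contractible. This bypasses van~Kampen, Mayer--Vietoris, Hurewicz and Whitehead entirely: the uniqueness of the core (Lemma~\ref{unique}) is doing all the work.
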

\begin{proof} Consider the complex $Z-\int(\sigma)$ where $\sigma$ is a 2-simplex, $\sigma\subset Z_0$ lying in the core. 
Starting from the complex $Z-\int(\sigma)$ and collapsing subsequently faces across the free edges we shall arrive to a connected graph $\Gamma$, as follows from the uniqueness of the core (Lemma \ref{unique}). Since $\chi(Z)=2$ (see Lemma \ref{proper}) we find $\chi(\Gamma)=\chi(Z-\int(\sigma)) =1$. 
Therefore $\Gamma$ is a tree. Hence the complex $Z-\int(\sigma)$ is contractible. This implies that $Z$ is homotopy equivalent to the result of attaching a 2-cell to $Z-\int(\sigma)$, hence $Z\simeq S^2$. \end{proof}

\begin{theorem}\label{wedge}
Any admissible 2-complex $X$ is homotopy equivalent to a wedge of circles, spheres and projective planes.
\end{theorem}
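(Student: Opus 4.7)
The plan is to proceed by induction on the number of $2$-simplices $f_2(X)$, assuming without loss of generality that $X$ is connected (since every connected component of an admissible complex is admissible, admissibility being hereditary under passage to subcomplexes). The base case $f_2(X)=0$ is immediate: any connected graph is homotopy equivalent to a wedge of circles.

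For the inductive step I distinguish three cases. First, if $X$ contains a free pair---an edge $e$ lying in exactly one $2$-simplex $\sigma$---then the elementary simplicial collapse produces a homotopy equivalent admissible subcomplex $X' = X - \int(e) - \int(\sigma)$ with one fewer $2$-simplex, so the inductive hypothesis applies. Second, if $X$ has no free pair but $b_2(X) > 0$, then $X$ contains a minimal cycle $Z$, which is admissible as a subcomplex of $X$. Whether $Z$ is of type A or of type B, Lemma \ref{typeA} or Lemma \ref{typeB} provides a $2$-simplex $\sigma \subset Z$ for which $\partial \sigma$ is null-homotopic in $Z - \int(\sigma) \subseteq X - \int(\sigma)$; hence $\sigma$ is attached to $X - \int(\sigma)$ via a null-homotopic map, giving $X \simeq (X - \int(\sigma)) \vee S^2$, and the inductive hypothesis applied to the admissible subcomplex $X - \int(\sigma)$ completes this case.

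The remaining case is that $X$ has no free pair and $b_2(X) = 0$. Write $X = X_0 \cup G$, where $X_0$ is the pure $2$-dimensional part (the union of closed $2$-simplices together with their faces) and $G$ is the $1$-dimensional subcomplex of all edges and vertices not lying in any $2$-simplex; then $X_0 \cap G$ is $0$-dimensional, and Mayer--Vietoris yields $b_2(X_0) = b_2(X) = 0$. The absence of a free pair forces $X_0$ to be closed, so its strongly connected components $Y_1, \ldots, Y_k$ are closed, admissible, strongly connected with $b_2(Y_i) = 0$, and share only vertices (not edges). Corollary \ref{b2=0} then identifies each $Y_i$ as a triangulation of $P^2$ or of $Q^2$, so $Y_i \simeq P^2$. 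Since $X$ is a union of the connected pieces $Y_1, \ldots, Y_k$ together with the connected components of $G$, glued at a discrete set of vertices, a standard vertex-gluing argument exhibits $X$ as homotopy equivalent to the wedge of these pieces together with one extra circle for each independent cycle introduced by the vertex identifications, yielding the desired wedge of copies of $P^2$ and of $S^1$.

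The main obstacle I anticipate lies in the final vertex-gluing step of the third case: one must verify that a finite union of connected CW complexes glued at a discrete set of vertices is homotopy equivalent to a wedge of the pieces, wedged with a finite bouquet of circles. This is standard (proved by contracting a spanning tree of the $1$-skeleton chosen to run through each piece and each component of $G$), but because the pieces may overlap at many vertices simultaneously, the combinatorial bookkeeping---ensuring that each $Y_i$-summand genuinely becomes an isolated $P^2$ in the wedge and that identifications contribute only $S^1$ factors---requires care.
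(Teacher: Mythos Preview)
Your proof is correct and follows essentially the same strategy as the paper's: reduce $b_2$ using a minimal cycle together with Lemmas \ref{typeA} and \ref{typeB} to split off an $S^2$, and in the $b_2=0$ case invoke Corollary \ref{b2=0} on the closed strongly connected components to recognise each as a $P^2$, then assemble via vertex-gluing. Your version differs only in inducting on $f_2$ rather than $b_2$ and in explicitly separating out the free-edge collapse (which in fact makes the application of Corollary \ref{b2=0} cleaner than in the paper, where closedness of the strongly connected components is tacitly assumed).
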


\begin{proof}
We will act by induction on $b_2(X)$.
If $b_2(X) = 0$ and $X$ is admissible then using Corollary \ref{b2=0} we see that each strongly connected component of $X$ is homotopy
equivalent to $P^2$. Hence $X$ is homotopy equivalent to a wedge of circles and projective planes.

Assume now that the statement of the Theorem has been proven for all connected admissible 2-complexes $X$ with $b_2(X) < k$.
Consider an admissible 2-complex $X$ satisfying $b_2(X) = k > 0$.
Find a minimal cycle $Z\subset X$ and observe that the homomorphism 
$H_2(Z; \mathbb{Z}) = \mathbb{Z} \to H_2(X; \mathbb{Z})$ induced by the inclusion is injective.
Let $\sigma\subset Z$ be a 2-simplex; if $Z$ is of type B we shall assume that $\sigma$ lies in the core $Z_0\subset Z$. 
We shall use Lemmas \ref{typeA} or \ref{typeB} depending whether $Z$ is of type A or B.
The complex $X' = X-\int(\sigma)$ satisfies $b_2(X' ) = k-1$ and is admissible and thus by induction $X'$ is homotopy equivalent to a wedge of circles, spheres and projective planes. 
Therefore, $X$ is homotopy equivalent to $X'\vee S^2$ and hence $X$ is homotopy equivalent to a wedge of circles, spheres and projective planes. 
\end{proof}

\begin{corollary}\label{core} The fundamental group of any admissible 2-complex $X$ is the free product of several copies of $\Z$ and $\Z_2$. In particular, $\pi_1(X)$ is hyperbolic. 
\end{corollary}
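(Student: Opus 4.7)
The plan is to read Corollary \ref{core} off immediately from Theorem \ref{wedge}. First I would apply Theorem \ref{wedge} to the admissible 2-complex $X$ to obtain a homotopy equivalence
$$X \;\simeq\; W \;=\; \bigvee_{i=1}^{a} S^1 \;\vee\; \bigvee_{j=1}^{b} S^2 \;\vee\; \bigvee_{k=1}^{c} P^2$$
for some non-negative integers $a,b,c$, which are necessarily finite because $X$ is a finite simplicial complex (and the three integers can be recovered from $b_1(X)$, $b_2(X)$, and the rank of the $2$-torsion of $H_1(X;\Z)$).

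Next I would use that $\pi_1$ is a homotopy invariant together with the standard consequence of van Kampen's theorem: the fundamental group of a wedge of well-pointed connected CW complexes is the free product of the fundamental groups of the wedge factors. Since $\pi_1(S^1)\cong \Z$, $\pi_1(S^2)$ is trivial (so the $S^2$-summands drop out), and $\pi_1(P^2)\cong \Z_2$, this yields
$$\pi_1(X) \;\cong\; \underbrace{\Z * \cdots * \Z}_{a \text{ copies}} \; * \;\underbrace{\Z_2 * \cdots * \Z_2}_{c \text{ copies}},$$
which is exactly the asserted free product of copies of $\Z$ and $\Z_2$.

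For the hyperbolicity statement I would cite two standard facts: (i) both $\Z$ (as a virtually trivial group acting cocompactly on the $0$-hyperbolic space $\R$) and the finite group $\Z_2$ are Gromov hyperbolic; and (ii) the free product of finitely many Gromov hyperbolic groups is Gromov hyperbolic (for instance via the Bass--Serre tree action associated with the splitting, or via a direct verification of a linear isoperimetric inequality on the natural presentation). Combining (i) and (ii) gives that $\pi_1(X)$ is hyperbolic.

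There is essentially no obstacle: Theorem \ref{wedge} has absorbed all the combinatorial content, and this corollary is just a matter of assembling the standard algebraic and geometric-group-theoretic facts above. The only point that deserves a brief mention is the finiteness of the wedge decomposition, which is why hyperbolicity of the free product applies without any combination-theorem subtleties.
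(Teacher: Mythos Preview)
Your proposal is correct and is exactly the argument the paper intends: Corollary \ref{core} is stated immediately after Theorem \ref{wedge} with no separate proof, so the paper's own ``proof'' is precisely the observation you spell out, namely apply Theorem \ref{wedge}, compute $\pi_1$ of the wedge by van Kampen, and note that a finite free product of copies of $\Z$ and $\Z_2$ is hyperbolic. There is nothing to add.
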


Corollary \ref{core} is in some sense a weak version of Theorem \ref{uniform} which will be proven later in \S \ref{app}.

\section{Non-triviality of the fundamental groups of random simplicial complexes}\label{nontriviality}

\begin{theorem} \label{nontrivial} Let $Y\in \Omega_n^r$ be a random simplicial complex with respect to the probability measure
$\PP_{r,\p}$ where $\p=(p_0, p_1, \dots, p_r)$. Assume that 
\begin{eqnarray}\label{one}
np_0p_1\to \infty
\end{eqnarray}
and for some $\epsilon>0$,
\begin{eqnarray}\label{two}
(np_0)^{1+\epsilon}p_1^3p_2^2\to 0.
\end{eqnarray}
Then for some choice of the base point $y_0\in Y$  the fundamental group $\pi_1(Y, y_0)$ is nontrivial, a.a.s.
\end{theorem}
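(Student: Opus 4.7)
The plan is to combine the uniform hyperbolicity from Theorem \ref{hyp} with the relative containment criterion of Theorem \ref{thm3} to produce a simplicial loop in $Y$ that admits no simplicial filling. I would fix a large integer $m$, depending on $\epsilon$ and the constant $c_\epsilon$ from Theorem \ref{hyp}. Two preliminary facts then hold a.a.s.\ for $Y$: by (\ref{one}) and Example \ref{circle}, $Y$ contains a simplicial cycle $C$ of length $m$, and by (\ref{two}) and Theorem \ref{hyp}, every subcomplex $Y' \subset Y$ satisfies $I(Y') \ge c_\epsilon$. Consequently, if $C$ were null-homotopic in $Y$ then the image of a minimal simplicial filling would be a subcomplex $S \subset Y$ containing $C$, with $f_2(S) \le m/c_\epsilon$ and $C$ null-homotopic in $S$. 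So, to make $[C]$ a nontrivial element of $\pi_1(Y, y_0)$ for $y_0 \in C$, it suffices to rule out, a.a.s., every such filling.

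To carry out this exclusion I would apply Theorem \ref{thm3} with base complex $C$ and with the family $\{S_j\}$ running over representatives of the (finitely many) isomorphism types of pairs $(C,S)$, where $S$ is a 2-complex containing a cycle of length $m$, satisfies $f_2(S) \le m/c_\epsilon$, and in which that cycle is null-homotopic. Condition (a) of Theorem \ref{thm3} applied to the base complex $C$ reduces, via $f_1(T) \le f_0(T)$ for every $T \subset C$, to the hypothesis $np_0p_1 \to \infty$, exactly as in Example \ref{circle}. Moreover, by Lemma \ref{small} one may discard from the family any $S_j$ whose pure 2-dimensional part is not $\epsilon$-admissible, since such a complex fails to embed in $Y$ a.a.s.

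The main obstacle is condition (b) of Theorem \ref{thm3}: showing that $(np_0)^{f_0(S_j)-m}\, p_1^{f_1(S_j)-m}\, p_2^{f_2(S_j)} \to 0$ for every remaining $\epsilon$-admissible filling $S_j$. The subtlety is that $\epsilon$-admissibility of $S_j$ alone (which only asserts the existence of $\beta_1,\beta_2 \ge 0$ with $3\beta_1+2\beta_2 > 1+\epsilon$ and $\beta_1 f_1(S_j)+\beta_2 f_2(S_j) < f_0(S_j)$) controls this quantity in the wrong direction; the extra ingredient must come from the structural fact that the cycle $C$ is null-homotopic in $S_j$. I would extract this via Theorem \ref{wedge}, which forces the homotopy type of $S_j$ to be a wedge of circles, spheres, and projective planes, so that $\pi_1(S_j)$ is a free product of copies of $\Z$ and $\Z_2$ and the vanishing $[C]=1$ imposes concrete combinatorial constraints, together with the observation that each of the $m$ edges of $C$ must lie in at least one $2$-simplex of $S_j$, giving $f_2(S_j) \ge m/2$. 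Writing $p_i = n^{-\alpha_i}$, the hyperbolicity assumption (\ref{two}) reads $3\alpha_1 + 2\alpha_2 > (1+\epsilon)(1-\alpha_0)$ with slack tending to $\infty$ when multiplied by $\log n$; combining this inequality with the constraints above bounds the exponent of the quantity in question above by some negative multiple of $m(1-\alpha_0)$, with the multiple depending only on $\epsilon$. Since $(1-\alpha_0)\log n = \log(np_0) \to \infty$ by (\ref{numberofvertices}), this forces the quantity to $0$ provided $m$ is chosen sufficiently large in terms of $\epsilon$. With (b) thereby verified, Theorem \ref{thm3} yields a.a.s.\ an embedding $C \hookrightarrow Y$ that extends to none of the $S_j$, and the first paragraph then forces $[C] \ne 1$ in $\pi_1(Y, y_0)$, completing the proof.
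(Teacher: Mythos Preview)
Your overall strategy coincides with the paper's: embed a cycle $C$ via Example~\ref{circle}, invoke Theorem~\ref{hyp} to bound the size of any filling, and use Theorem~\ref{thm3} to produce an embedding of $C$ that extends to no filling. You also correctly identify condition~(b) of Theorem~\ref{thm3} as the crux and correctly note that $\epsilon$-admissibility alone points the wrong way. The gap is in how you then verify~(b).

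Your proposed ingredients---Theorem~\ref{wedge}, the bound $f_2(S_j)\ge m/2$, and taking $m$ large---do not control the quantity $(np_0)^{f_0(S_j,C)}p_1^{f_1(S_j,C)}p_2^{f_2(S_j,C)}$. The difficulty is the factor $(np_0)^{f_0(S_j,C)}$, and nothing you list bounds $f_0(S_j,C)$ against $f_1(S_j,C)$ and $f_2(S_j,C)$. Concretely, your family (all $S$ with $C$ null-homotopic, $f_2(S)\le m/c_\epsilon$, and $\epsilon$-admissible pure part) contains, for instance, a triangulated $m$-gon with several boundaries of tetrahedra attached at vertices; for suitable parameters satisfying~(\ref{one}) and~(\ref{two}) with small $\epsilon$, condition~(b) fails for these, and enlarging $m$ does not help since the number of tetrahedra one may attach scales with $m/c_\epsilon$.

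What is missing is a \emph{minimality} restriction on the family: require that for every proper $C\subset S'\subsetneq S_j$ the loop $C$ is still essential in $S'$. The paper imposes exactly this. Minimality forces $b_2(S_j)=0$ (else remove a simplex from a minimal cycle via Lemmas~\ref{typeA},~\ref{typeB} without changing $\pi_1$), hence $\chi(S_j)\le 1$; and it forces every free edge to lie on $C$, hence $L(S_j)\le f_1(C)$. Plugging these into the identities $3\chi+L=3f_0-f_1$ and $2\chi+L=2f_0-f_2$ for both $S_j$ and $C$ yields
\[
f_1(S_j,C)\ge 3f_0(S_j,C)+1,\qquad f_2(S_j,C)\ge 2f_0(S_j,C)+2,
\]
from which
\[
n^{f_0(S_j,C)}\prod_{i=0}^2 p_i^{f_i(S_j,C)}\le \bigl[np_0p_1^3p_2^2\bigr]^{f_0(S_j,C)}\cdot p_1p_2^2\to 0,
\]
since $np_0p_1^3p_2^2\to 0$ and $p_1p_2^2\to 0$ follow from (\ref{one}) and (\ref{two}). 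This works already for $m=4$; the device of taking $m$ large is unnecessary.
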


\begin{remark}\rm 
If the assumption (\ref{one}) is replaced by the stronger assumption
\begin{eqnarray}
np_0p_1- \log(np_0) \to \infty,
\end{eqnarray}
then $Y\in \Omega_n^r$ is connected, a.a.s. (see Corollary 7.2 from \cite{CF15}). 
However under the assumption (\ref{one}) a random complex might be disconnected, see \S 7 from \cite{CF15}) and thus, the fundamental group 
$\pi_1(Y, y_0)$ might depend on the choice of the base point $y_0\in Y$. 
\end{remark}

\begin{remark}{\rm 
In the special case when $$p_i=n^{-\alpha_i}$$ with $\alpha_i\ge 0$ constant, where $i=0, 1, \dots$ Theorem \ref{nontrivial} states that a random complex 
$Y$ has a nontrivial fundamental group assuming that 
\begin{figure}[t] 
   \centering
   \includegraphics[width=3in]{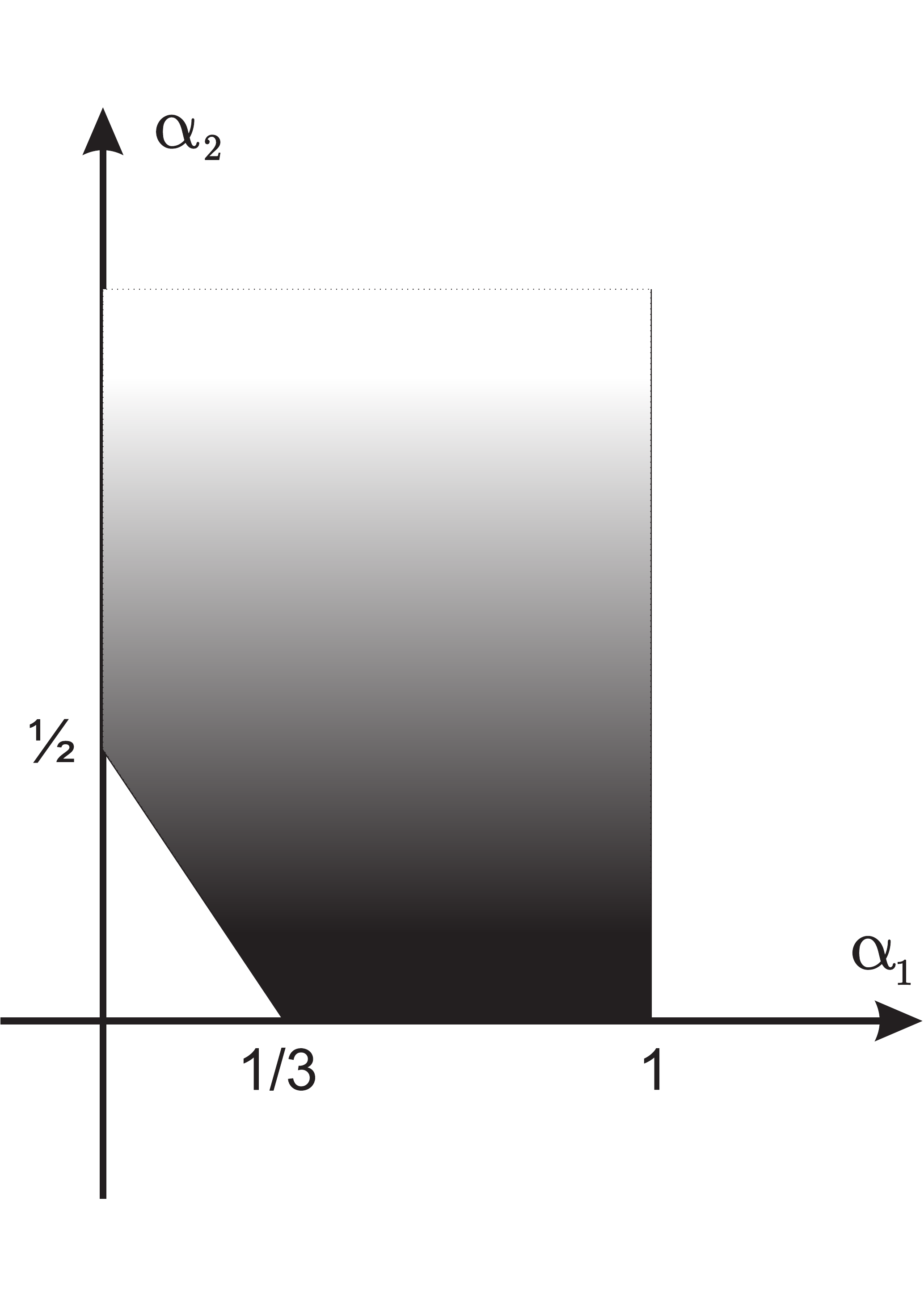} 
   \caption{The region on the plane of $\alpha_1, \alpha_2$ where the fundamental group $\pi_1(Y)$ is nontrivial and hyperbolic in the sense of Gromov.}
   \label{fig:nontrivial}
\end{figure}
\begin{eqnarray*}
&&\alpha_0+\alpha_1 <1, \\
&&\alpha_0 +3\alpha_1+2\alpha_2>1.
\end{eqnarray*}
From paper \cite{CF15a} we know that the inequality $\alpha_0+\alpha_1 <1$ implies connectivity of $Y$. 
}
\end{remark}

\begin{proof}[Proof of Theorem \ref{nontrivial}] 
Let $C$ be the simplicial loop of length 4, i.e. the boundary of the square. 
Using Example \ref{circle}, we note that our assumption (\ref{one}) implies that a random complex $Y\in \Omega_n^r$ contains $C$ as a subcomplex with probability tending to one. 

We want to show that $Y$ contains $C$ 
as {\it \lq\lq an essential subcomplex\rq\rq},\,  i.e. such that the inclusion $C\subset Y$ induces a non-trivial homomorphism $\pi_1(C, y_0)\to \pi_1(Y, y_0)$. This would imply that $\pi_1(Y,y_0)\not=1$ for some choice of the base point.
We shall use Theorem \ref{thm3} to show the existence of essential embeddings $C\to Y$.

Let $c_\epsilon>0$ be the constant given by Theorem \ref{hyp}. 

The arguments of the proof which is presented below may seem formal, and to illustrate them we give in this paragraph a brief vague intuitive description. 
If an inclusion $C\subset Y$ is not essential then $C\to Y$ can be extended to a simplicial map $D\to Y$ where $D$ is a simplicial disc. 
Using the inequality 
$I(Y)\ge c_\epsilon$ (which we may assume to be satisfied due to Theorem \ref{uniform}), we may assume that $D$ has at most $4\cdot c_\epsilon^{-1}$ 
2-simplexes. Of course the map $D\to Y$ does not have to be injective; therefore the image of $D\to Y$ is a subcomplex $S\subset Y$ with at most $4\cdot c_\epsilon^{-1}$ 2-simplexes. There are finitely many isomorphism types of disc triangulations with at most $4\cdot c_\epsilon^{-1}$ 2-simplexes and 
there are finitely many isomorphism types of
their simplicial images $S$;  those $S$'s which are not $\epsilon$-admissible 
appear with probability tending to $0$ (due to Lemma \ref{small}). Below we formalise the properties of complexes $S$ which may appear in this way and use Theorem \ref{thm3} to show that there exists an embedding $S\to Y$ which cannot be extended to an embedding $S\to Y$; this embedding is clearly essential. 

Now we continue with the formal argument. 
%
%
Consider the set $\cal L_\epsilon$ of isomorphism types of pairs
$(S,C)$ where $S$ is a finite 2-complex such that:

(a) the inclusion $C\to S$ induces the trivial homomorphism of the fundamental groups;

(b) $S$ is minimal in the sense that for any proper subcomplex $C\subset S'\subsetneq S$ the inclusion $C\to S'$ induces a non-trivial homomorphism 
$\pi_1(C)\to \pi_1(S')$; 
 
 (c) $f_2(S)\le 4\cdot c_\epsilon^{-1}$;
 
 (d) $S$ is $\epsilon$-admissible. 
 
 Note that for $S\in \mathcal L_\epsilon$ one has $b_2(S)=0$. 
 Indeed, if $b_2(S)\not=0$ then $S$ would contain a minimal cycle $Z\subset S$ and this minimal cycle would be 
 $\epsilon$-admissible. 
 By Lemma \ref{typeA} and Lemma \ref{typeB} the complex $Z$ contains a 2-simplex $\sigma$ such that the boundary 
 $\partial \sigma$ is null-homotopic in $Z-\int(\sigma)$. Hence removing $\sigma$ does not change the fundamental group and we obtain contradiction with the minimality property (b). 
 
 We obtain that $\chi(S)\le 1$ for any $S\in \mathcal L_\epsilon$.
 
 It is easy to see that $S$ cannot have edges of degree zero (i.e. $S$ must be pure) and  any edge of $S$ of degree $1$ must lie in $C$ 
 (as follows from the minimality property (b)).
 Therefore $L(S)\le f_1(C)= 4$. 
 
 We observe that for any $S\in \mathcal L_\epsilon$ one has
 \begin{eqnarray}
 f_1(S,C) \, &\ge&\,  3f_0(S,C)+1,\label{f1}\\
 f_2(S,C) \, &\ge &\, 2f_0(S,C)+2.\label{f2}
 \end{eqnarray}
 Recall that the notation $f_i(S,C)$ stands for $f_i(S)-f_i(C)$, the number of $i$-simplexes lying in $S-C$. 
 For any simplicial complex $X$ of dimension $\le 2$ one has
 \begin{eqnarray*}
 3\chi(X) + L(X) = 3f_0(X) -f_1(X), \\
 2\chi(X) +L(X) = 2f_0(X) -f_2(X). 
 \end{eqnarray*}
 Applying these equalities to $S$ and to $C$ and using 
 $$\chi(S) \le 1, \quad L(S)\le 4, \quad \chi(C)=0, \quad L(C)=8$$ gives (\ref{f1}) and (\ref{f2}). 
 
Now, using (\ref{f1}) and (\ref{f2}) for $S\in \mathcal L_\epsilon$ we obtain
 $$
 n^{f_0(S,C)}\prod_{i=0}^2 p_i^{f_i(S, C)} \le \left[np_0p_1^3p_2^2\right]^{f_0(S,C)}\cdot p_1p_2^2.
 $$
 We claim that our assumptions (\ref{one}) and (\ref{two}) imply that $np_0p_1^3p_2^2\to 0$ and $p_1p_2^2\to 0$. 
 Indeed, (\ref{one}) implies that $np_0\to \infty$ and then (\ref{two}) implies that $np_0p_1^3p_2^2\to 0$. It is easy to see that 
 $p_1p_2^2\to 0$ follows. 
 
 Next we observe that the set $\mathcal L_\epsilon$ is finite and hence we may apply Theorem \ref{thm3}. 
 Since all the assumptions of this theorem are satisfied we obtain that {\it with probability tending to one, for $Y\in \Omega_n^r$ there exists an embedding 
 $C\to Y$ which cannot be extended to an embedding $S\to Y$ for any $S\in \mathcal L_\epsilon$.} 
 
 Consider the set $\Omega'_n\subset \Omega_n^r$ consisting of complexes $Y\in \Omega_n^r$ satisfying the following three conditions:

(1) every subcomplex $Y'\subset Y$ satisfies $I(Y')\ge c_\epsilon$; 

(2) any 2-dimensional pure subcomplex $S\subset Y$ with $f_2(S) \le 4\cdot c_\epsilon^{-1}$ is $\epsilon$-admissible. 

(3) $Y$ contains $C$ as a subcomplex such that no complex $S\in \mathcal L_\epsilon$ can be embedded into $Y$ extending the embedding $C\to Y$. 

By Theorem \ref{uniform}, Lemma \ref{small}, Theorem \ref{thm3} and Example \ref{circle}, the probability that $Y$ belongs to $\Omega'_n$ tends to one as $n\to \infty$. 
We show below that 
for every $Y\in \Omega'_n$ the fundamental group $\pi_1(Y, y_0)$ is non-trivial for some choice of the base point.

 Let us explain that for $Y\in \Omega_n'$
 the embedding 
 $C\to Y$ is essential. 
For any embedding $C\subset Y$ which is not essential there exists a simplicial disc 
$D$ with at most $4\cdot c_\epsilon^{-1}$ 2-simplexes and $\partial D=C$ which can be simplicially mapped into $Y$ extending the embedding $C\to Y$ (this follows from the inequality $I(Y)\ge c_\epsilon^{-1}$ given by Theorem \ref{hyp}). 
The image of this disc is a subcomplex $S\subset Y$ containing $C$ which satisfies the properties (a), (c). Property (d) is satisfied because of Lemma 
\ref{small}. 
If the minimality property (b) is not satisfied, then instead of $S$ we can consider an appropriate smaller subcomplex. 
Hence any non-essential embedding 
$C\to Y$ can be extended to an embedding $S\to Y$ for some $S\in \mathcal L_\epsilon$. 
This completes the proof.
 \end{proof}

\begin{remark}\rm In the proof presented above we showed the existence of essential loops of length 4. The same argument applies for loops of any fixed length $\ge 4$; it also applies to loops of length 3 under an additional assumption $p_2\to 0$. It is obvious that in the case when $p_2=1$ every loop of length 3 is the boundary of a 2-simplex in $Y$ and hence our statement would be false for loops of length 3. 

\end{remark}

\section{The existence of 2-torsion}\label{torsion2}

\begin{theorem} \label{2torsiona} 
Let $Y\in \Omega_n^r$ be a random simplicial complex with respect to the probability measure
$\PP_{r,\p}$ where $\p=(p_0, p_1, \dots, p_r)$. Assume that 
\begin{eqnarray}\label{2}
p_2\to 0,
\end{eqnarray}
\begin{eqnarray}\label{3}
np_0p_1^{5/2}p_2^{5/3}\to \infty,
\end{eqnarray}
and for some $\epsilon>0$,
\begin{eqnarray}\label{4}
(np_0)^{1+\epsilon}p_1^3p_2^2\to 0.
\end{eqnarray}
Then  the fundamental group $\pi_1(Y)$ has elements of order 2, a.a.s.
\end{theorem}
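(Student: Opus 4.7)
The plan is to embed a triangulated real projective plane $P^2$ essentially in $Y$, so that the generator of $\Z_2 \cong \pi_1(P^2)$ has nontrivial image (and hence order exactly $2$) in $\pi_1(Y)$. Fix the minimal $6$-vertex triangulation of $\mathbb{RP}^2$, which has $(f_0,f_1,f_2)=(6,15,10)$ and is balanced. Balance reduces the hypothesis of Theorem \ref{thm1}B for containment to $np_0 p_1^{5/2} p_2^{5/3}\to\infty$, which is exactly assumption (\ref{3})---the exponents $5/2=15/6$ and $5/3=10/6$ are tailored to $P^2$. So a copy of $P^2$ sits inside $Y$ a.a.s.

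Next, (\ref{3}) and (\ref{4}) together imply the hypotheses of Theorem \ref{hyp}, so every subcomplex $Y'\subset Y$ satisfies $I(Y')\geq c_\epsilon$ a.a.s. Fix a simplicial loop $\gamma$ of length $L_0$ in $P^2$ representing the generator of $\pi_1(P^2)$, and set $C_0=10+L_0/c_\epsilon$. Define $\mathcal{M}_\epsilon$ as the (finite) set of isomorphism types of $\epsilon$-admissible 2-complexes $S\supset P^2$ with $f_2(S)\leq C_0$ for which $\pi_1(P^2)\to\pi_1(S)$ is trivial but becomes nontrivial upon removing any 2-simplex of $S-P^2$. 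I would apply Theorem \ref{thm3} with this family; the key step is verifying its hypothesis (b), $n^{f_0(S,P^2)}\prod_i p_i^{f_i(S,P^2)}\to 0$, for each $S\in\mathcal{M}_\epsilon$.

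I would split the verification on $f_0(S,P^2)$. If $f_0(S,P^2)=0$, then since adding only edges to $P^2$ yields $\pi_1=\Z_2\ast F_k$ in which $\Z_2$ still embeds, one must have $f_2(S,P^2)\geq 1$, so the probability factor is at most $p_2\to 0$ by (\ref{2}). If $f_0(S,P^2)\geq 1$, I would use admissibility together with Lemmas \ref{typeA}--\ref{typeB} to analyze minimal cycles inside $S$: either every minimal cycle has core not contained in $P^2$, so one may remove a 2-simplex of the core outside $P^2$ while preserving $\pi_1$ (contradicting minimality and forcing $b_2(S)=0$), whence extending minimality to edges (via elementary collapses) gives $L(S)\leq 0$ and $\chi(S)\leq 1$ and the Euler relations yield $f_1(S,P^2)\geq 3f_0(S,P^2)$ and $f_2(S,P^2)\geq 2 f_0(S,P^2)$; or else $S$ coincides with a type-B minimal cycle $Z=P^2\cup D$ whose core is $P^2$, and a direct Euler count on the disc $D$ with $V_{\mathrm{int}}=f_0(S,P^2)$ interior vertices and boundary length $L\leq L_0$ gives $f_1(S,P^2)=3V_{\mathrm{int}}+L-3$ and $f_2(S,P^2)=2V_{\mathrm{int}}+L-2$. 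In either subcase the probability factor is bounded by $[(np_0)p_1^3 p_2^2]^{f_0(S,P^2)}$ times a bounded constant, and this tends to zero by (\ref{4}).

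Theorem \ref{thm3} then produces, a.a.s., an embedding $P^2\hookrightarrow Y$ not extending to any $S\in\mathcal{M}_\epsilon$. If $\gamma$ were null-homotopic in $Y$, Theorem \ref{hyp} would supply a simplicial filling of area $\leq L_0/c_\epsilon$; its image united with $P^2$, after passing to a subcomplex minimal with respect to the killing property, would lie in $\mathcal{M}_\epsilon$ (it remains $\epsilon$-admissible by Lemma \ref{small}) and would extend the given embedding---contradiction. Hence $\gamma$ represents a nontrivial element of $\pi_1(Y)$ whose square is trivial, so $\pi_1(Y)$ has 2-torsion. The main obstacle is the type-B minimal cycle subcase in the $f_0(S,P^2)\geq 1$ analysis---one cannot simply shrink past a minimal cycle whose core equals $P^2$ itself---and it is precisely the independent assumption (\ref{2}) (rather than merely a consequence of (\ref{4})) that dispatches the companion $f_0(S,P^2)=0$ case.
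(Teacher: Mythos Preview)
Your overall strategy coincides with the paper's: embed the $6$-vertex triangulation $S_0$ of $P^2$ via Theorem~\ref{thm1}B and balance, invoke Theorem~\ref{hyp}, assemble a finite family of minimal $\epsilon$-admissible ``killing'' supercomplexes $S\supset S_0$, and apply Theorem~\ref{thm3}. The difference lies in how you verify $n^{f_0(S,S_0)}\prod_i p_i^{f_i(S,S_0)}\to 0$. The paper does \emph{not} split on $f_0(S,S_0)$. It shows uniformly that $\chi(S,S_0)\le 1$ (from $b_2(S)\le 1$) and $L(S,S_0)\le -3$ (the graph $\Gamma=S_0\cap\overline{S-S_0}$ must satisfy $b_1(\Gamma)\ge 1$, else $S_0\hookrightarrow S$ would be $\pi_1$-injective, hence $f_1(\Gamma)\ge 3$). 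Plugging into $2\chi+L=2f_0-f_2$ yields $f_2(S,S_0)\ge 2f_0(S,S_0)+1$ in \emph{every} case, producing the extra factor $p_2$ and giving $[np_0p_1^3p_2^2]^{f_0(S,S_0)}\cdot p_2\to 0$ at once.

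Your case split works in the $f_0(S,S_0)=0$ and the $b_2(S)=0$ branches, but in the type-B branch there is a gap: you assert $S=P^2\cup D$ with $D$ a simplicial disc and then run a disc Euler count. This is not justified. For an admissible type-B minimal cycle $Z$ with core $S_0$, the complement $\overline{Z-S_0}$ is strongly connected and collapsible to a graph (by uniqueness of the core), but nothing forces it to be a disc: it can pinch at vertices of $S_0$, and the attaching graph $\Gamma$ need only satisfy $3\le f_1(\Gamma)\le 5$, not be a single simple cycle. Consequently your formulas $f_1(S,S_0)=3V_{\mathrm{int}}+L-3$ and $f_2(S,S_0)=2V_{\mathrm{int}}+L-2$ are unwarranted. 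The repair is immediate and is exactly what the paper does: in the type-B branch invoke Lemma~\ref{proper} directly to get $\chi(S,S_0)=2-1=1$ and $L(S,S_0)=L(S)\le -3$, whence $f_1(S,S_0)\ge 3f_0(S,S_0)$ and $f_2(S,S_0)\ge 2f_0(S,S_0)+1$; this subsumes your three branches into one line and makes the role of assumption~(\ref{2}) transparent.
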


\begin{remark}{\rm 
The assumptions of Theorem \ref{2torsiona} imply that $Y\in \Omega_n^r$  connected, a.a.s. To show this one may use Corollary 7.2 from \cite{CF15a}, 
which requires that 
$\omega\to \infty$ and $\omega p_1-\log(\omega) \to \infty$ where $\omega = np_0$. By (\ref{3}) we have $n^{2/5}p_0^{2/5}p_1=\omega'\to \infty$. 
Hence $\omega p_1 -\log \omega = \omega^{3/5}\cdot \omega' -\log \omega \to \infty$. 
}
\end{remark}

\begin{remark}{\rm 
In the special case when $$p_i=n^{-\alpha_i}$$ with $\alpha_i\ge 0$ constant, where $i=0, 1, \dots$ Theorem \ref{2torsiona} states that the fundamental group of a random complex 
$Y$ has a nontrivial element of order 2 assuming that 
\begin{eqnarray*}
&&\alpha_0+\frac{5}{2} \alpha_1 + \frac{5}{3} \alpha_2<1, \\
&&\alpha_0 +3\alpha_1+2\alpha_2>1,\\
&&\alpha_2>0.
\end{eqnarray*}
This result was proven in \cite{CF1} in the special case when $\alpha_1=0$. See Figure \ref{fig:2torsion}. 

}
\end{remark}
\begin{figure}[t] 
   \centering
   \includegraphics[width=3in]{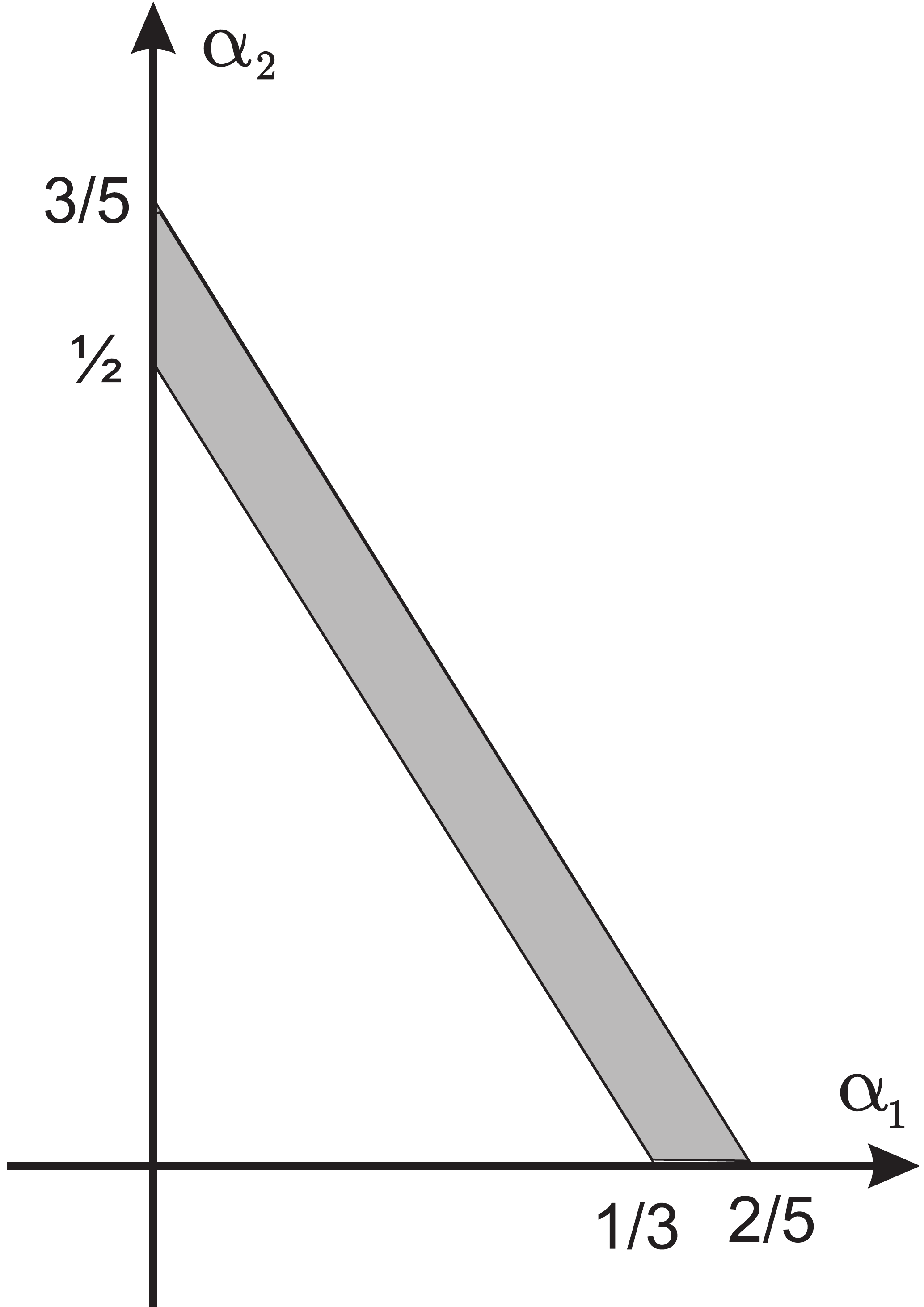} 
   \caption{The region on the plane of $\alpha_1, \alpha_2$ where the fundamental group $\pi_1(Y)$ has 2-tosion.}
   \label{fig:2torsion}
\end{figure}

\begin{proof}[Proof of Theorem \ref{2torsiona}]
Consider the triangulation $S_0$ of the real projective plane $P^2$ shown in Figure \ref{proj}; 
it has 6 vertices, 15 1-simplexes and 10 2-simplexes. It is the triangulation of $P^2$ having the smallest number of vertices. By Theorem 4.4 from \cite{CF14} $S_0$ is balanced, which means that for any non-empty subcomplex $T\subset S_0$ one has
$$\frac{f_i(T)}{f_0(T)} \le \frac{f_i(S_0)}{f_0(S_0)}.$$
Applying Theorem \ref{thm1}, {\bf B}, se see that for any $T\subset S_0$, $T\not=\emptyset$, 
\begin{eqnarray*}
\left[n^{f_0(T)}\prod_{i=0}^2 p_i^{f_i(T)} \right]^{\frac{1}{f_0(T)}} \ge 
n\cdot \prod_{i=0}^2 p_i^{\frac{f_i(T)}{f_0(T)}} 
\ge n\cdot \prod_{i=0}^2 p_i^{\frac{f_i(S_0)}{f_0(S_0)}} = np_0 p_1^{5/2}p_2^{5/3}\to \infty. 
\end{eqnarray*}
Thus we see that under the assumption (\ref{3}) the simplicial complex 
$S_0$ embeds into a random complex 
$Y\in \Omega_n^r$, a.a.s.

\begin{figure}[h] 
   \centering
   \includegraphics[width=3in]{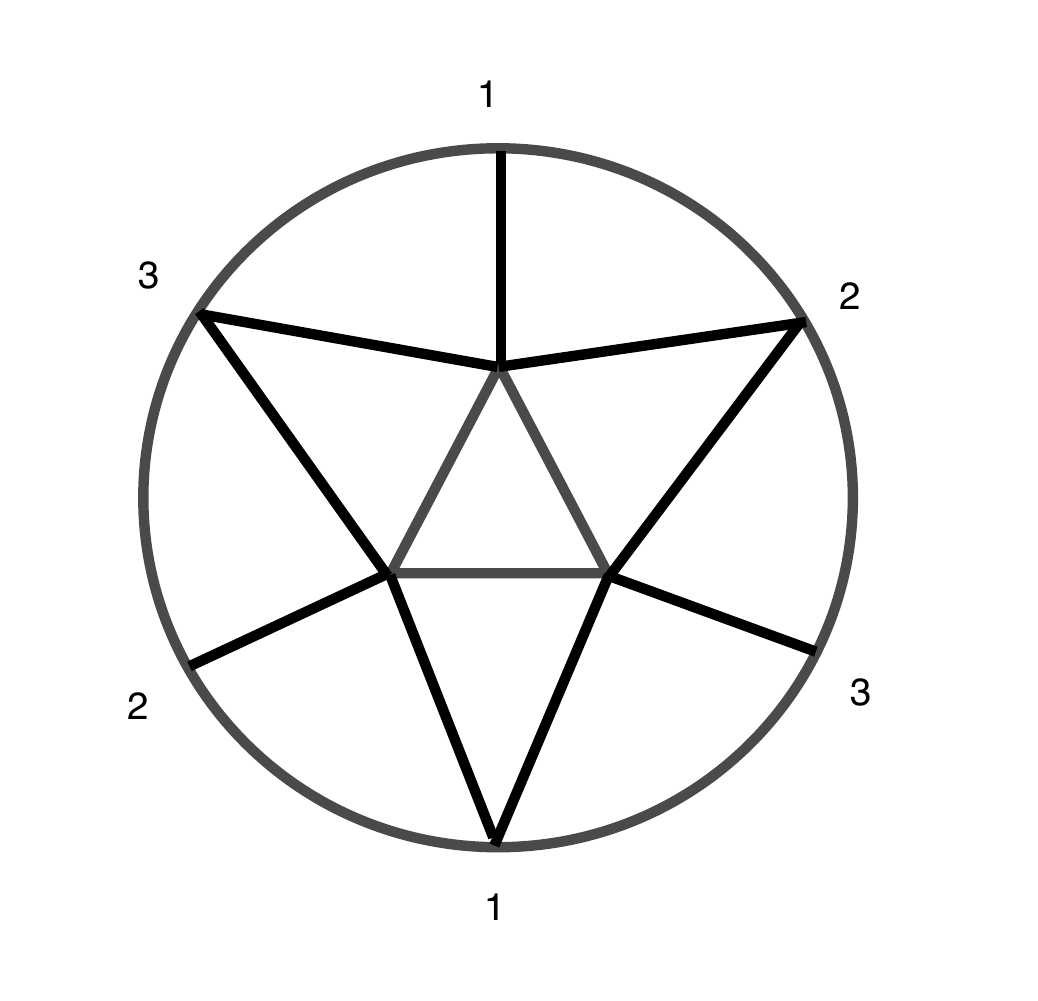} 
   \caption{The triangulation of the real projective plane having 6 vertices, 15 edges and 10 faces. The vertices and edges on the outer circle must be identified as indicated. }\label{proj} 
\end{figure}

We want to show that $Y$ contains $S_0$ 
as {\it \lq\lq an essential subcomplex\rq\rq},\,  i.e. such that the inclusion $S_0\subset Y$ induces a non-trivial homomorphism 
$\pi_1(S_0)=\Z_2\to \pi_1(Y)$; this would imply that $\pi_1(Y)$ has 2-torsion. 
We shall use Theorem \ref{thm3} to show the existence of an essential embedding $S_0\to Y$. Our strategy will be similar to those used in the proof of Theorem \ref{nontrivial}. 

We construct below a finite list $\cal L_\epsilon$ of 2-complexes $S$ containing $S_0$ such that every non-essential embedding of $S_0$ into $Y\in \Omega'_n$ extends to an embedding $S\to Y$, for some $S\in \cal L_\epsilon$. 

Consider the set $\cal L_\epsilon$ of isomorphism types of pairs
$(S,S_0)$ where $S$ is a finite 2-complex containing $S_0$ satisfying the following conditions:

(a) the inclusion $S_0\to S$ induces the trivial homomorphism of the fundamental groups;

(b) $S$ is minimal in the sense that for any proper subcomplex $S_0\subset S'\subsetneq S$ the inclusion $S_0\to S'$ induces an injective homomorphism 
$\pi_1(S_0)\to \pi_1(S')$; 
 
 (c) $f_2(S)\le 3\cdot c_\epsilon^{-1}+10$;
 
 (d) $S$ is $\epsilon$-admissible. 
 
 Note that any $S\in \mathcal L_\epsilon$ is pure. Indeed, if $S'\subset S$ denotes the pure part of $S$ then $\pi_1(S')\to \pi_1(S)$ is injective and hence 
 the inclusion $S_0\subset S'$ induces a trivial homomorphism $\pi_1(S_0)\to \pi_1(S')$; therefore the minimality property (b) implies $S=S'$.

 Consider the minimal cycles contained in a complex $S\in \cal L_\epsilon$. 
 If $Z\subset S$ is a minimal cycle of type A 
 then $Z$ cannot be contained in $S_0$ and by Lemma \ref{typeA} there is a 2-simplex $\sigma\subset Z- S_0$ with $\partial \sigma$ null-homotopic in $Z-\int(\sigma)$. Removing $\sigma$ from $S$ does not change the fundamental group and leads to a proper subcomplex $S_0\subset S'\subset S$ contradicting the minimality property (b). Hence $S$ does not contain minimal cycles of type A.
 
 Let $Z\subset S$ be a minimal cycle of type B where $S\in \cal L_\epsilon$. Let $Z_0\subset Z$ be the core of $Z$. Recall that the core $Z_0$ is either homeomorphic to $P^2$ or to the quotient $Q^2$ of $P^2$ where two adjacent edges are identified. If 
 $Z_0$ does not coincide with $S_0$ then by Lemma \ref{typeB} we may find a 2-simplex $\sigma\subset Z_0-S_0$ with $\partial \sigma$ null-homotopic in $Z-\int(\sigma)$. Removing $\sigma$ from $S$ does not change the fundamental group and leads to a subcomplex of $S$ contradicting the minimality property (b). Hence $Z_0=S_0$. 
 
 This shows that $S$ cannot have minimal cycles of type A and any minimal cycle of type B contained in $S$ must have $S_0$ as its core. 
 
 Let $Z\subset S$ be a minimal cycle of type B contained in $S$. Then $S_0\subset Z$and by Lemma \ref{typeB} $Z$ is simply connected and therefore the inclusion $S_0\subset Z$ induces the trivial homomorphism on the fundamental groups. Hence by minimality (b) we have $Z=S$. This shows that 
 $b_2(S)\le 1$. In particular, 
 \begin{eqnarray}\label{chirel}
 \chi(S, S_0) = \chi(S)-\chi(S_0) \le 1.
 \end{eqnarray}

 Next we show that $L(S,S_0)=L(S)-L(S_0)\leq -3$. 
 Let $S'=\overline{S-S_0}$ be the closure of the complement of $S_0$ in $S$. The intersection $S_0\cap S'$ is a graph $\Gamma$. 
 If $b_1(\Gamma)=0$ then the inclusion $S_0\to S$ is injective on the fundamental groups. Hence $b_1(\Gamma)\geq 1$ and thus $f_1(\Gamma)\geq 3$ implying that 
 \begin{eqnarray}
 L(S, S_0) = L(S)-L(S_0) \leq -3.
 \end{eqnarray}

Note that for any simplicial complex $X$ of dimension $\leq 2$ one has 
\begin{eqnarray}
3\chi(X)+L(X)=3f_0(X)-f_1(X), \nonumber \\ 
\label{qlrel}\\
2\chi(X)+L(X)=2f_0(X)-f_2(X).\nonumber
\end{eqnarray} 
Using these equalities and the above relations we obtain
\begin{align*}
3f_0(S,S_0)-f_1(S,S_0)&=3\chi(S,S_0)+L(S,S_0)\leq 0\\
2f_0(S,S_0)-f_2(S,S_0)&=2\chi(S,S_0)+L(S,S_0)\leq -1
\end{align*}
We conclude that for any $S\in \mathcal L_\epsilon$ one has
 \begin{eqnarray*}
 f_1(S,S_0) \, &\ge&\,  3f_0(S,S_0),\label{f12}\\
 f_2(S,S_0) \, &\ge &\, 2f_0(S,S_0)+1.\label{f22}
 \end{eqnarray*}

Now we see that
\begin{eqnarray}\label{prodrel} 
 n^{f_0(S,S_0)}\prod_{i=0}^2 p_i^{f_i(S, S_0)} \le \left[np_0p_1^3p_2^2\right]^{f_0(S,S_0)}\cdot p_2\to 0.
\end{eqnarray}
Applying Theorem \ref{thm3} we obtain that with probability tending to one, $Y\in \Omega_n^r$ admits an embedding of $S_0$ which does not extend to an embedding $S\to Y$ for every $S\in \mathcal L_\epsilon$. 

Let $c_\epsilon>0$ be the constant given by Theorem \ref{hyp}. Consider the set $\Omega'_n\subset \Omega_n^r$ consisting of complexes $Y\in \Omega_n^r$ satisfying the following three conditions:

(1)  $ Y$ satisfies $I(Y)\ge c_\epsilon$; 

(2) any 2-dimensional subcomplex $S\subset Y$ with $$f_2(S) \le 3\cdot c_\epsilon^{-1}+10$$ is $\epsilon$-admissible. 

(3) $Y$ contains a copy of $S_0$ as a subcomplex such that there exists no complex $S\in \mathcal L_\epsilon$ for which the embedding $S_0\to Y$ can be extended to an embedding $S\to Y$. 

By Theorem \ref{hyp}, Lemma \ref{small}, Theorem \ref{thm1} and Theorem 4.4 from \cite{CF14}, 
the probability that $Y$ belongs to $\Omega'_n$ tends to one as $n\to \infty$. 

 Let us explain that for $Y\in \Omega_n'$
 the embedding 
 $S_0\to Y$ is essential. 
For any embedding $S_0\subset Y$ which is not essential there exists a simplicial disc 
$D$ with at most $3\cdot c_\epsilon^{-1}$ 2-simplexes and $\partial D=C$ where $C\subset S_0$ is the non-null-homotpic loop on $S_0$ of length 3. 
The disc $D$
is simplicially mapped into $Y$ extending the embedding $C\to Y$ (this follows from the inequality $I(Y)\ge c_\epsilon^{-1}$ given by Theorem \ref{hyp}). 
The union of $S_0$ and the image of this disc is a subcomplex $S\subset Y$ containing $S_0$ which satisfies the properties (a), (c). 
Property (d) is satisfied because of Lemma 
\ref{small}. 
If the minimality property (b) is not satisfied, then instead of $S$ we can take an appropriate smaller subcomplex. 
Hence any non-essential embedding 
$C\to Y$ can be extended to an embedding $S\to Y$ for some $S\in \mathcal L_\epsilon$. 

This completes the proof.
\end{proof}

Note that the assumption $p_2\to 0$ was essentially used in the proof since in the product (\ref{prodrel}) the exponent $f_0(S, S_0)$ may happen to be $0$.
We believe that Theorem \ref{2torsiona} will remain true under a weaker assumption $p_2<c<1$ where $c$ is a constant.  

Next we state another result assuming that $p_2=1$, which is a generalisation of Theorem 7.2 from \cite{CF2}.

\begin{theorem} \label{2torsionb} 
Let $Y\in \Omega_n^r$ be a random simplicial complex with respect to the probability measure
$\PP_{r,\p}$ where $\p=(p_0, p_1, \dots, p_r)$. Assume that 
\begin{eqnarray}\label{2b}
p_2=1,
\end{eqnarray}
\begin{eqnarray}\label{3b}
np_0p_1^{30/11}\to \infty,
\end{eqnarray}
and for some $\epsilon>0$,
\begin{eqnarray}\label{4b}
(np_0)^{1+\epsilon}p_1^3\to 0.
\end{eqnarray}
Then  the fundamental group $\pi_1(Y)$ has nontrivial elements of order two, a.a.s.
\end{theorem}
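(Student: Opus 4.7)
The strategy parallels the proof of Theorem \ref{2torsiona}, with one essential change: replace the $6$-vertex minimal triangulation of $P^2$ by a \emph{flag} triangulation $S_0$ of $P^2$ on $11$ vertices, with $30$ edges and $20$ two-faces (such a triangulation is used in \cite{CF2}; one verifies directly that $S_0$ is balanced in the sense of \S 4 of \cite{CF14}). The exponent $30/11$ in (\ref{3b}) is precisely $f_1(S_0)/f_0(S_0)$. The flagness property---every triangle on the vertices of $S_0$ whose three edges lie in $S_0$ is already a $2$-face of $S_0$---is crucial when $p_2=1$: it ensures that any copy of $S_0$ inside $Y$ is not killed by extraneous $2$-faces automatically attached via $p_2=1$.

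First I would apply Theorem \ref{thm1}{\bf B}, using the balance of $S_0$ and hypothesis (\ref{3b}): the worst-case ratio in (\ref{assumption1}) is attained at $T=S_0$ and equals $np_0p_1^{30/11}\to\infty$, so $S_0\hookrightarrow Y$ a.a.s. To show the embedding is essential, I would follow the strategy of Theorem \ref{2torsiona}: assume otherwise, so the non-contractible length-$3$ loop $\gamma\subset S_0$ bounds a simplicial disc in $Y$ of area $\le 3/c_\epsilon$ (by Theorem \ref{hyp}, whose hypothesis reduces to (\ref{4b}) here). The image of this disc together with $S_0$ gives a subcomplex $S\subset Y$ lying in the finite set $\mathcal{L}_\epsilon$ of isomorphism types of pairs $(S,S_0)$ satisfying: (a) $\pi_1(S_0)\to\pi_1(S)$ is trivial, (b) $S$ is minimal with this property, (c) $f_2(S)\le 20+3c_\epsilon^{-1}$, (d) $S$ is $\epsilon$-admissible. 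The Euler-characteristic and $L$-invariant bookkeeping carried out in Theorem \ref{2torsiona} applies verbatim and yields
\begin{equation*}
f_1(S,S_0)\ge 3f_0(S,S_0),\qquad f_2(S,S_0)\ge 2f_0(S,S_0)+1
\end{equation*}
for every $S\in\mathcal{L}_\epsilon$.

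The main technical point---the only genuine difference from the proof of Theorem \ref{2torsiona}---is verifying hypothesis (\ref{zero1}) of Theorem \ref{thm3}, namely
\begin{equation*}
n^{f_0(S,S_0)}\,p_0^{f_0(S,S_0)}\,p_1^{f_1(S,S_0)}\to 0,
\end{equation*}
since the $p_2$-factor is now $1$ and no longer helps. If $f_0(S,S_0)\ge 1$, this is bounded by $[(np_0)p_1^3]^{f_0(S,S_0)}$, and $(np_0)p_1^3=(np_0)^{1+\epsilon}p_1^3/(np_0)^\epsilon\to 0$ by (\ref{4b}) together with $np_0\to\infty$ (which follows from (\ref{3b})). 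If $f_0(S,S_0)=0$, i.e.\ $S$ has the same vertex set as $S_0$, then the flagness of $S_0$ forces $S$ to contain at least one new edge: no new $2$-face can be added without introducing a new edge, since such a $2$-face would have all edges in $S_0$ and hence already be a face of $S_0$. Thus $f_1(S,S_0)\ge 1$, and since (\ref{4b}) implies $p_1\to 0$, the product again tends to zero. This case $f_0(S,S_0)=0$ is the delicate step, and is where the flag property of $S_0$ is used in an essential way; I expect this to be the main obstacle to formalising.

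Having verified the hypotheses of Theorem \ref{thm3}, I would conclude exactly as in the proof of Theorem \ref{2torsiona}. Combining with Lemma \ref{small} (providing $\epsilon$-admissibility of all small pure subcomplexes of $Y$) and Theorem \ref{hyp} (the uniform isoperimetric bound $I(Y)\ge c_\epsilon$), one obtains that with probability tending to one there is an embedding $S_0\hookrightarrow Y$ which extends to no element of $\mathcal{L}_\epsilon$. Any such embedding is essential, so the induced map $\Z_2=\pi_1(S_0)\to\pi_1(Y)$ is nontrivial, and $\pi_1(Y)$ contains an element of order $2$.
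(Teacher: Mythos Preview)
Your overall strategy is sound and close to the paper's, but there is one slip and one place where the paper proceeds differently.

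The slip: in a \emph{clean} (flag) triangulation of $P^2$ there is no non-contractible loop of length $3$; every length-$3$ cycle in $S_0^{(1)}$ bounds a $2$-simplex of $S_0$ by the very flagness you invoke. You must take $\gamma$ of length at least $4$, and the constant in (c) should be $f_2(S)\le 4c_\epsilon^{-1}+20$ rather than $20+3c_\epsilon^{-1}$. This only changes a constant and does not affect the logic.

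The genuine difference from the paper is in how cleanness is used to close the estimate. You retain the bookkeeping of Theorem \ref{2torsiona} verbatim (so only $L(S,S_0)\le -3$ and $f_1(S,S_0)\ge 3f_0(S,S_0)$) and then treat $f_0(S,S_0)=0$ as a separate case, using flagness to force $f_1(S,S_0)\ge 1$. The paper instead deploys cleanness one step earlier, inside the $L$-estimate: if the interface graph $\Gamma=S_0\cap\overline{S-S_0}$ had only three edges, its unique cycle would bound a $2$-simplex in $S_0$, so $\pi_1(S_0)\to\pi_1(S)$ would be injective, contradicting (a). Hence $f_1(\Gamma)\ge 4$, giving $L(S,S_0)\le -4$ and therefore $f_1(S,S_0)\ge 3f_0(S,S_0)+1$. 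This yields the single uniform bound
\[
n^{f_0(S,S_0)}\prod_{i=0}^2 p_i^{f_i(S,S_0)}\le \bigl[np_0p_1^3\bigr]^{f_0(S,S_0)}\cdot p_1\;\le\; p_1\;\to\;0,
\]
with no case split needed. Both routes are correct; the paper's is slightly cleaner, while yours makes explicit where the flag property is doing the work when $f_0(S,S_0)=0$.
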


\begin{remark}{\rm 
In the special case when $$p_i=n^{-\alpha_i}$$ with $\alpha_i\ge 0$ constant, where $i=0, 1, \dots$ Theorem \ref{2torsionb} states that the fundamental group of a random complex 
$Y$ has a nontrivial element of order 2 assuming that 
\begin{eqnarray*}
&&\alpha_0+\frac{30}{11} \alpha_1 \, <1, \\
&&\alpha_0 +3\alpha_1>1,\\
&&\alpha_2=0.
\end{eqnarray*}
This result was proven in \cite{CF2} in the special case when $\alpha_2=0$.

}
\end{remark}

\begin{remark} \rm 
The main distinctions between Theorems \ref{2torsiona} and \ref{2torsionb}
are the assumptions regarding the behaviour of $p_2$. 
The 2-torsion in the fundamental group is generated by essential embeddings of the real projective plane $P^2$. 
In the case of Theorem \ref{2torsiona} we are dealing with the embeddings of 
 the minimal triangulation $S_0$ of $P^2$. However, in the case when $p_2=1$ the triangular essential loop of $S_0$ bounds a triangle in $Y$. 
 This explains that in the case $p_2=1$ one has consider {\it clean} triangulations of 
$P^2$. Recall that a triangulation of a 2-complex is called clean if for any clique of 3 vertices $\{v_0, v_1, v_2 \}$
the complex contains also the 2-simplex $(v_0, v_1, v_2)$.

We shall use the following fact: any clean triangulation of the projective plane $P^2$ contains at least $11$ vertices and $30$ edges, see \cite{HR}. The minimal clean triangulation is shown in Figure \ref{cleanp2}; the antipodal points of the circle must be identified. 
\begin{figure}[h]
\centering
\includegraphics[width=0.4\textwidth]{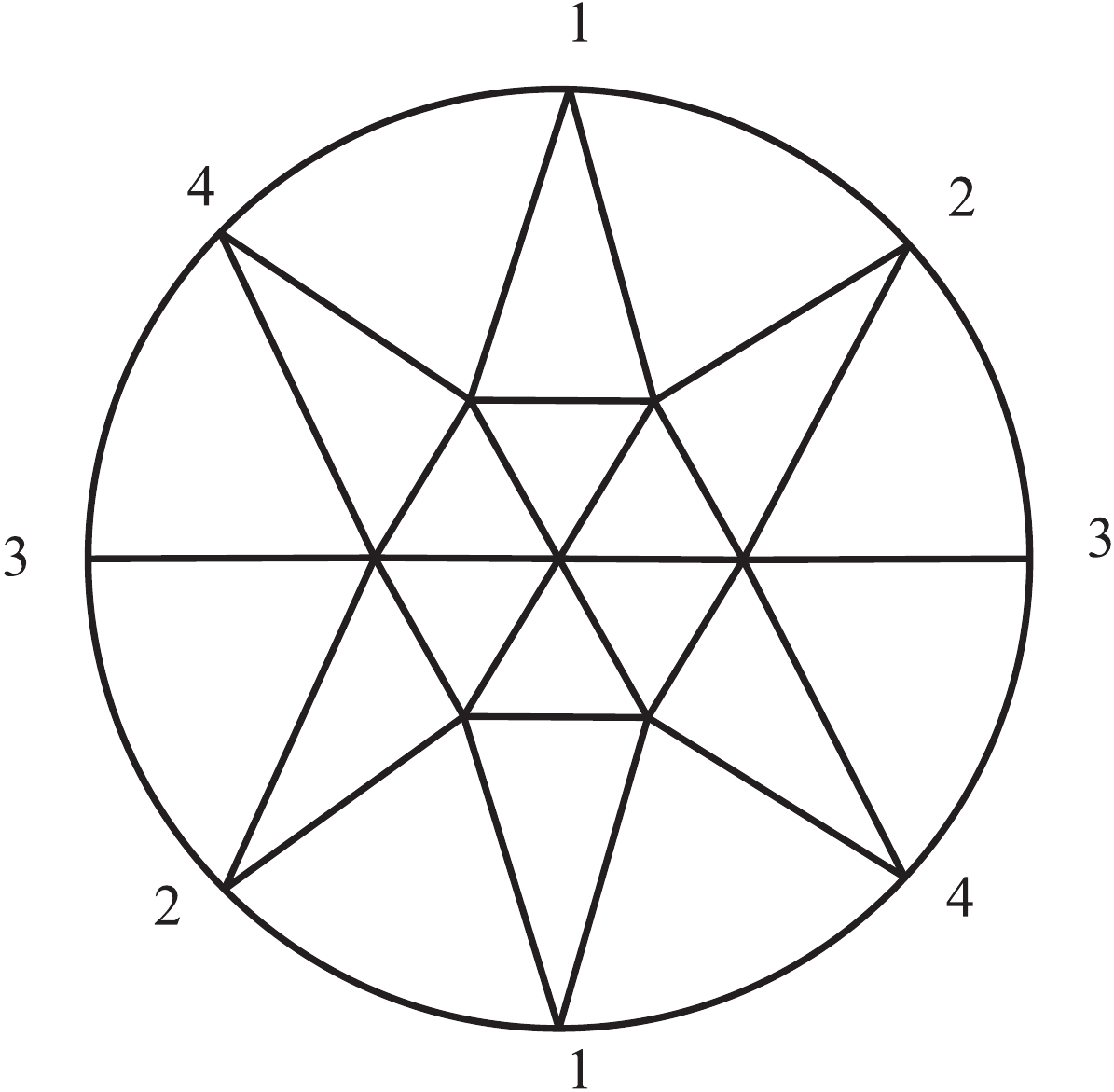}
\caption{The minimal clean triangulation of $P^2$, according to \cite{HR}.}\label{cleanp2}
\end{figure}

\end{remark}

\begin{proof}[Proof of Theorem \ref{2torsionb}.] The proof is very similar to the proof of Theorem \ref{2torsiona}; we shall indicate the main steps and emphasise the main distinctions. 

Let $S_0$ be the minimal clean triangulation of the real projective plane shown on Figure \ref{cleanp2}. It is balanced (by Theorem 4.4 from \cite{CF14}) and as in the proof of Theorem \ref{2torsiona} we find that for $np_0p_1^{30/11}p_2^{20/11} \to \infty$ the complex $S_0$ simplicially embeds into $Y$, a.a.s. 
Since we assume that $p_2=1$, this conditions coincides with (\ref{3b}). 

As above, we want to show that $Y$ contains $S_0$ 
as {\it \lq\lq an essential subcomplex\rq\rq}.

Consider the set $\cal L_\epsilon$ of isomorphism types of pairs
$(S,S_0)$ where $S$ is a finite 2-complex containing $S_0$ satisfying the following conditions:

(a) the inclusion $S_0\to S$ induces the trivial homomorphism of the fundamental groups;

(b) $S$ is minimal in the sense that for any proper subcomplex $S_0\subset S'\subsetneq S$ the inclusion $S_0\to S'$ induces an injective homomorphism 
$\pi_1(S_0)\to \pi_1(S')$; 
 
 (c) $f_2(S)\le 4\cdot c_\epsilon^{-1}+20$;
 
 (d) $S$ is $\epsilon$-admissible. 
 
 As in the proof of Theorem one obtains that each $S\in \mathcal L_\epsilon$ is pure and  
 \begin{eqnarray}\label{chirell}
 \chi(S, S_0) = \chi(S)-\chi(S_0) \le 1.
 \end{eqnarray}

Let us show that  
\begin{eqnarray}\label{llrel1}
 L(S, S_0) = L(S)-L(S_0) \leq -4.
 \end{eqnarray} 
 (unlike the case of Theorem \ref{2torsiona}). 
 Let $S'=\overline{S-S_0}$ be the closure of the complement of $S_0$ in $S$. The intersection $S_0\cap S'$ is a graph $\Gamma$. 
 If $b_1(\Gamma)=0$ then the inclusion $S_0\to S$ is injective on the fundamental groups. Hence $b_1(\Gamma)\geq 1$. Thus $\Gamma$ has a cycle and has at least 3 edges. However if $f_1(\Gamma)=3$ then the cycle of $\Gamma$ bounds a 2-simplex on $S_0$ and hence the inclusion $S_0\to S$ is injective on the fundamental group, in contradiction with our assumption (a). 
Thus, $f_1(\Gamma)\geq 4$ implying (\ref{llrel1}).

Combining (\ref{chirell}) and (\ref{llrel1}) with (\ref{qlrel}), 
we conclude that for any $S\in \mathcal L_\epsilon$ one has
 \begin{eqnarray*}
 f_1(S,S_0) \, &\ge&\,  3f_0(S,S_0)+1,\label{f122}\\
 f_2(S,S_0) \, &\ge &\, 2f_0(S,S_0)+2.\label{f222}
 \end{eqnarray*}
 and we see that
\begin{eqnarray}\label{prodrell} 
 n^{f_0(S,S_0)}\prod_{i=0}^2 p_i^{f_i(S, S_0)}& \le& \left[np_0p_1^3p_2^2\right]^{f_0(S,S_0)}\cdot p_1p_2^2 \\&=& \left[np_0p_1^3\right]^{f_0(S, S_0)}\cdot p_1\\ & \le& p_1\to 0.
\end{eqnarray}
Note that $p_1\to 0$ as follows from (\ref{4b}). 

The rest of the proof is identical to the proof of Theorem \ref{2torsiona}. 

This completes the proof.

\end{proof}

\section{Higher torsion}

In this section we show that random simplicial complexes have no odd torsion for a large range of probability parameters. 

\begin{theorem}\label{oddtorsion} Let $m\ge 3$ be a fixed odd prime. Consider a random simplicial complex $Y\in \Omega_n^r$, $r\ge 2$, with respect to the 
probability measure $\PP_{r, \p}$ where $\p=(p_0, p_1, \dots, p_r)$. 
Assume that for some $\epsilon>0$ one has
\begin{eqnarray*}
(np_0)^{1+\epsilon} p_1^3p_2^2\to 0.
\end{eqnarray*}
Then a random complex $Y\in\Omega_n^r$ with probability tending to 1 has the following property: the fundamental group of any connected subcomplex $Y'\subset Y$ has no $m$-torsion.
\end{theorem}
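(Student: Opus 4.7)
The plan is to combine the uniform hyperbolicity theorem (Theorem \ref{hyp}) with the classification of fundamental groups of admissible 2-complexes (Corollary \ref{core}), following the template established for Theorems \ref{nontrivial} and \ref{2torsiona}. Fix $c_\epsilon>0$ as in Theorem \ref{hyp} and work in the a.a.s.\ event that every subcomplex $Y'\subset Y$ satisfies $I(Y')\geq c_\epsilon$, so that $\pi_1(Y')$ is Gromov hyperbolic. The key preliminary input is a standard fact from hyperbolic group theory: there exists a constant $L_0=L_0(c_\epsilon)$ such that every torsion element of $\pi_1(Y')$ is conjugate to an element represented by a simplicial loop of length at most $L_0$.

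Suppose for contradiction that some $g\in\pi_1(Y')$ has order $m$. After conjugation I may assume $g=[\gamma]$ for a simplicial loop $\gamma$ with $|\gamma|\leq L_0$. Since $\gamma^m\sim 1$ in $Y'$, the inequality $I(Y')\geq c_\epsilon$ yields a simplicial filling $V$ of $\gamma^m$ with $f_2(V)\leq mL_0/c_\epsilon=:C$. Let $S\subset Y'$ be the image of $V$; because $V$ is a simplicial disc, every vertex and edge of $V$ is a face of a 2-simplex of $V$, so $S$ is pure 2-dimensional with $f_2(S)\leq C$, contains $\gamma$, and admits the filling $V\to S$ of $\gamma^m$. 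By Lemma \ref{small} applied with this constant $C$, a.a.s.\ $S$ is $\epsilon$-admissible. By Corollary \ref{core}, $\pi_1(S)$ is a free product of copies of $\Z$ and $\Z_2$, which contains no element of odd prime order. Therefore $[\gamma]^m=1$ in $\pi_1(S)$ forces $[\gamma]=1$ in $\pi_1(S)$, so $\gamma$ is null-homotopic in $Y'$ and $g=1$, a contradiction. This is exactly the type of small-subcomplex argument used in the earlier nontriviality and 2-torsion theorems, adapted here to produce a contradiction instead of an essential embedding.

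The main obstacle is the uniform length bound $L_0$ on torsion conjugates. In any $\delta$-hyperbolic group every torsion element is classically conjugate to one of word length $O(\delta)$, but one must ensure that the relevant $\delta$ depends only on the isoperimetric constant $c_\epsilon$ and not on further combinatorial data of $Y'$ such as vertex degrees (which are unbounded in the random model). I would address this by invoking Theorem \ref{localtoglobal} together with the uniform bound $I(S)\geq C_\epsilon$ on $\epsilon$-admissible 2-complexes supplied by Theorem \ref{uniform}: since every sufficiently small pure 2-dimensional subcomplex of $Y'$ is admissible with uniformly controlled isoperimetric constant, the hyperbolicity constant of $\pi_1(Y')$ is controlled by $c_\epsilon$ alone, and the standard torsion-length estimate then produces the desired $L_0$.
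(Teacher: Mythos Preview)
Your overall architecture matches the paper's: produce a subcomplex $S\subset Y'$ of bounded size that carries the putative $m$-torsion, use Lemma \ref{small} to see that (the pure part of) $S$ is $\epsilon$-admissible, and then invoke Corollary \ref{core} (equivalently Theorem \ref{wedge}) to conclude that $\pi_1(S)$ is a free product of copies of $\Z$ and $\Z_2$, giving a contradiction. The paper executes the first step differently: rather than a short-conjugate argument, it uses the Moore-surface invariant $N_m(Y)$ and the black-box Lemma \ref{lmc} (Lemma 4.7 of \cite{CF1}), which says directly that $I(Y)\ge c$ forces $N_m(Y)\le (6m/c)^2$. From this one gets a finite list of possible images $X=f(\Sigma)$ of $m$-minimal Moore maps, each with $f_2(X)\le (6m/c_\epsilon)^2$, and the rest is identical to your endgame. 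So the two arguments diverge only in how the uniform size bound on the witnessing subcomplex is obtained.

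The point where your argument is genuinely incomplete is the bound $L_0=L_0(c_\epsilon)$. You are right that this is the crux, but the justification you offer---appealing to Theorems \ref{localtoglobal} and \ref{uniform}---is circular: those results together are exactly what already gives $I(Y')\ge c_\epsilon$, and they say nothing further about the hyperbolicity constant of the universal cover or about short representatives of torsion. What is actually needed is (i) the quantitative statement that a simply connected simplicial $2$-complex whose $1$-skeleton satisfies a linear isoperimetric inequality with constant $c$ is $\delta$-hyperbolic with $\delta$ depending only on $c$ (and on the mesh, here $1$), and then (ii) the circumcentre argument: a finite-order isometry $g$ of a $\delta$-hyperbolic graph moves some vertex by $O(\delta)$, since any two circumcentres of a $g$-invariant finite set lie within $O(\delta)$ of each other. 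Both facts are standard but neither is supplied by the results you cite; the paper sidesteps the issue entirely by quoting Lemma \ref{lmc}, whose proof in \cite{CF1} packages precisely this bound. A minor additional point: your claim that the image $S$ of the filling disc is pure is not correct, since a simplicial map may collapse $2$-simplices to edges or vertices; the paper handles this by passing to the pure part of the image and noting that attaching the remaining edges only contributes free factors to $\pi_1$.
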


\begin{remark}{\rm 
In the special case when $$p_i=n^{-\alpha_i}$$ with $\alpha_i\ge 0$ being constant, where $i=0, 1, \dots$ Theorem \ref{oddtorsion} states that 
for any odd prime $m\ge 3$ 
the fundamental group of a random complex 
$Y$ has no nontrivial elements of order $m$ assuming that 
\begin{eqnarray*}
&&\alpha_0 +3\alpha_1+2\alpha_2>1.
\end{eqnarray*}
This result was proven in \cite{CF1} in the special case when $\alpha_1=0$ and in \cite{CF2} in the special case $\alpha_2=0$. 

}
\end{remark}

The proof of Theorem \ref{oddtorsion} given below uses an auxiliary material which we now describe.

Let $f_m: S^1\to S^1$ denote the canonical degree $m$ map, $f_m(z)=z^m$ where $z\in S^1$;
we think of $S^1$ as being the unit circle on the complex plane. 
Any simplicial complex $\Sigma$ homeomorphic 
$$M(\Z_m, 1)=S^1\cup_{f _m}e^2$$
is called {\it a Moore surface}. 

Everywhere in this section we shall assume that $m\ge 3$ is a fixed odd prime. 

Then any Moore surface $\Sigma$ 
 has a well defined circle $C\subset \Sigma$ (called {\it the singular circle}) which is the union of all edges of degree $m$; 
 all other edges of
$\Sigma$ have degree $2$. Clearly, the homotopy class of the singular circle generates the fundamental group $\pi_1(\Sigma)\simeq \Z_m$.

Define an integer $N_m(Y)\ge 0$ associated to any connected simplicial complex $Y\in \Omega_n^r$. If $\pi_1(Y)$ has no $m$-torsion we set $N_m(Y)=0.$
If $\pi_1(Y)$ has elements of order $m$ we consider homotopically nontrivial simplicial maps
$\gamma: C_r \to Y$,
where $C_r$ is the simplicial circle with $r$ edges, such that
\begin{enumerate}
  \item[(a)] $\gamma^m$ is null-homotopic (as a free loop in $Y$);
  \item[(b)] $r$ is minimal: for $r'<r$ any simplicial loop $\gamma:C_{r'} \to Y$ satisfying (a) is homotopically trivial.
  \end{enumerate}
Any such simplicial map $\gamma:C_r \to Y$ can be extended to a simplicial map $f: \Sigma \to Y$ of a Moore surface $\Sigma$ such that the singular circle $C$ of $\Sigma$ is isomorphic to $C_r$ and $f|C=\gamma$. We shall say that a simplicial map $f:\Sigma \to Y$ is {\it $m$-minimal} if
it satisfies (a), (b) and the number of 2-simplexes in $\Sigma$ is the smallest possible. Clearly, any m-minimal map $\Sigma\to Y$ induces an injective homomorphism 
$\pi_1(\Sigma)\simeq \Z_m\to \pi_1(Y)$. 
We denote by
$$N_m(Y)\in \Z$$
the number of 2-simplexes in a triangulation of a Moore surface $\Sigma$ admitting an $m$-minimal map $f: \Sigma \to Y$.


We recall Lemma 4.7 from \cite{CF1}: 
\begin{lemma}\label{lmc}
Let $Y$ be a simplicial complex satisfying $I(Y)\ge c>0$ and let 
$m\ge 3$ be an odd prime.
Then
one has
$$N_m(Y) \le \left(\frac{6m}{c}\right)^2. $$
\end{lemma}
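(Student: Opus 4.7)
The plan is to construct an auxiliary Moore surface from a simplicial filling of $\gamma^m$ and then combine the $m$-minimality of $f:\Sigma\to Y$ with the isoperimetric hypothesis $I(Y)\ge c$.

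Let $f:\Sigma\to Y$ be an $m$-minimal map realizing $N=N_m(Y)=f_2(\Sigma)$. Its singular circle $C\subset \Sigma$ has length $r$, each edge of $C$ has degree $m$ in $\Sigma$ and every other edge has degree $2$, and $\gamma=f|_C$ represents a generator of the cyclic subgroup $\langle[\gamma]\rangle\cong\mathbb{Z}/m\subset\pi_1(Y)$. Since $\gamma^m$ is null-homotopic of length $rm$ and $I(Y)\ge c$, there is a simplicial filling $D$ of $\gamma^m$ with $f_2(D)\le rm/c$. Let $v_0,\dots,v_{m-1}\in\partial D$ be the vertices splitting $\partial D$ into $m$ consecutive arcs, each mapped to $\gamma$ by the filling map $h:D\to Y$.

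\textbf{Step 1: bounding $N$ in terms of $r$.} Identifying the $m$ arcs of $\partial D$ via the cyclic $\mathbb{Z}/m$-rotation gives a Moore surface triangulation $\Sigma'$ with the same 2-simplex count as $D$ and whose singular circle has length $r$ (a harmless barycentric-type subdivision, if needed, guarantees simpliciality and only changes constants). The map $h$ descends to a simplicial map $\Sigma'\to Y$ that restricts to $\gamma$ on the singular circle, so conditions (a) and (b) in the definition of an $m$-minimal map are satisfied by $\Sigma'$. Clause (c) in the definition of $N_m(Y)$ then forces
$$N\le f_2(\Sigma')\le rm/c.$$

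\textbf{Step 2: bounding $r$.} Since $m\ge 3$ is prime, $\gcd(j,m)=1$ for every $j\in\{1,\dots,m-1\}$. Any simplicial path $\sigma$ in the 1-skeleton of $D$ from $v_0$ to $v_j$ descends to a simplicial loop $\tilde\sigma$ in $Y$ homotopic to $\gamma^j$, which is a generator of $\langle[\gamma]\rangle\cong\mathbb Z/m$; so $\tilde\sigma$ is nontrivial in $\pi_1(Y)$ while $\tilde\sigma^m$ is null-homotopic. Clause (b) in the definition of $r$ therefore forces $|\sigma|\ge r$, so the $m$ vertices $v_0,\dots,v_{m-1}$ are pairwise at combinatorial distance $\ge r$ in the 1-skeleton of $D$. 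Combined with $f_2(D)\le rm/c$ and the Euler relation $V-E+f_2(D)=1$ together with the edge-face incidence identity $3f_2(D)=2E_{\mathrm{int}}+rm$, a ball-growth/Cheeger-style argument on the simplicial disc $D$ produces the bound
$$r\le 6m/c.$$
This is the main obstacle: one must exploit both the two-dimensional structure of $D$ and the presence of $m$ pairwise-distant boundary vertices to push the naive one-dimensional path count into a genuine upper bound on $r$.

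\textbf{Conclusion.} Combining the two estimates,
$$N_m(Y)\le \frac{rm}{c}\le\frac{6m}{c}\cdot\frac{m}{c}=\frac{6m^2}{c^2}\le\left(\frac{6m}{c}\right)^2,$$
which is the asserted inequality.
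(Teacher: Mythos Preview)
Note first that the present paper does not supply its own proof of this lemma: it is quoted verbatim as ``Lemma~4.7 from \cite{CF1}'', so there is no in-paper argument to compare yours against.

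Your Step~1 is essentially correct, though the quotient of $D$ under the cyclic boundary identification need not be a simplicial complex, and the subdivision you invoke to repair this also subdivides the singular circle, so ``only changes constants'' needs more care than you indicate. The genuine gap is Step~2. You correctly deduce that the marked boundary vertices $v_0,\dots,v_{m-1}$ are pairwise at combinatorial distance $\ge r$ in the $1$-skeleton of $D$, and you have $f_2(D)\le rm/c$; but these two facts alone do not force $r\le 6m/c$. The Euler and incidence identities you list give $V=1+(f_2(D)+rm)/2$, and since the simple boundary cycle of $D$ already contributes $rm$ vertices, this produces a bound on $r$ only when $c>1$, which is not the regime of interest here (the constants $c_\epsilon$ in this paper are small). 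A ``ball-growth'' count fares no better: the balls $B(v_i,r/2)$ can lie entirely along the boundary arcs (indeed $d(v_i,v_{i+1})=r$ exactly, realised by the boundary arc), so they contribute no interior vertices beyond the $rm$ already counted, and no nontrivial inequality results. You yourself flag this step as ``the main obstacle'' and then assert the conclusion without supplying a mechanism; as written, Step~2 is a statement of what one would like to prove rather than a proof. What is missing is a genuinely two-dimensional cut-and-replace argument that exploits the minimality of $f_2(\Sigma)$ (clause~(c) in the definition of $m$-minimal), which you use only once in Step~1 and never return to; that is where the missing leverage lies.
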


Using this lemma we may obtain a global upper bound on the numbers $N_m(Y)$ for random complexes:

\begin{theorem}\label{thmm}
Assume that the probability multi-parameter $\p=(p_0, p_1, \dots, p_r)$, where $r\ge 2$, satisfies 
$$ (np_0)^{1+\epsilon}p_1^3p_2^2\to 0$$ with $\epsilon >0$ is fixed.
Let $m\ge 3$ be an odd prime.
Then there exists a constant $C_\epsilon>0$ such that a random complex $Y\in \Omega_n^r$ with probability tending to 1 has the following property: for any subcomplex $Y'\subset Y$ one has
\begin{eqnarray}\label{ineqq}
N_m(Y') \le C_\epsilon.
\end{eqnarray}
\end{theorem}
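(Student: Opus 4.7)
The plan is to deduce Theorem \ref{thmm} almost immediately by combining the uniform hyperbolicity statement of Theorem \ref{hyp} with the pointwise bound of Lemma \ref{lmc}. The hypothesis $(np_0)^{1+\epsilon}p_1^3 p_2^2 \to 0$ is precisely condition (\ref{epsilon}) of Theorem \ref{hyp}, so once we also have $np_0 \to \infty$ (which we add as an implicit standing hypothesis, since otherwise the number of vertices of $Y$ is bounded in probability and the conclusion is trivial), Theorem \ref{hyp} supplies a constant $c_\epsilon>0$ depending only on $\epsilon$ such that, with probability tending to $1$ as $n\to\infty$, every subcomplex $Y'\subset Y$ satisfies $I(Y') \ge c_\epsilon$.

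On the event that the isoperimetric bound $I(Y')\ge c_\epsilon$ holds uniformly over all subcomplexes $Y'\subset Y$, I would then apply Lemma \ref{lmc} subcomplex by subcomplex. The lemma yields, for any simplicial complex with $I\ge c>0$ and any odd prime $m\ge 3$, the upper bound $N_m(\cdot) \le (6m/c)^2$. Setting
\begin{equation*}
C_\epsilon \,=\, \left(\frac{6m}{c_\epsilon}\right)^2,
\end{equation*}
we obtain $N_m(Y')\le C_\epsilon$ for every $Y'\subset Y$ simultaneously, which is exactly the conclusion of the theorem.

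There is essentially no obstacle here beyond the verification that the two cited results can be chained together; the whole argument is one short syllogism. The only minor subtlety worth mentioning is that $C_\epsilon$ depends on $m$ as well as on $\epsilon$ through the factor $(6m)^2$, but since $m$ is fixed throughout the statement, this is harmless and matches the way similar constants are treated in \S\ref{nontriviality} and \S\ref{torsion2}. No new probabilistic estimate is required: all of the randomness has been absorbed into the a.a.s.\ event produced by Theorem \ref{hyp}.
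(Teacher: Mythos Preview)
Your proposal is correct and matches the paper's own proof essentially line for line: the paper invokes Theorem~\ref{hyp} to get $I(Y')\ge c_\epsilon$ for every subcomplex a.a.s., then applies Lemma~\ref{lmc} with $c=c_\epsilon$ and sets $C=(6m/c_\epsilon)^2$. Your remarks about the implicit hypothesis $np_0\to\infty$ and the dependence of $C_\epsilon$ on $m$ are accurate and consistent with how the paper handles these points.
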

\begin{proof}  We know from Theorem \ref{hyp} that, with probability tending to 1, a random complex $Y$ has the following property:
for any subcomplex $Y'\subset Y$ one has
$I(Y')\ge c_\epsilon>0$ where $c_\epsilon>0$ is the constant given by Theorem \ref{hyp}.
Then, setting $C= \left(\frac{6m}{c_\epsilon}\right)^2$, the inequality (\ref{ineqq}) follows from Lemma \ref{lmc}.
\end{proof}

Now we are ready to present the proof of Theorem \ref{oddtorsion}:

\begin{proof}[Proof of Theorem \ref{oddtorsion}] Let $c_\epsilon>0$ be the number given by Theorem \ref{hyp}.
Consider the finite set of all isomorphism types of Moore surfaces $\mathcal S_m =\{\Sigma\}$ having at most
$\left(\frac{6m}{c_\epsilon}\right)^2$ two-dimensional simplexes. Let $\mathcal X_m$ denote
the set of isomorphism types of images of all surjective simplicial maps $\Sigma \to X$ inducing injective homomorphisms $\pi_1(\Sigma)=\Z_m \to \pi_1(X)$,
where $\Sigma \in \mathcal S_m$.
The set $\mathcal X_m$ is also finite.

From Theorem \ref{thmm} we obtain that, with probability tending to one, for any subcomplex $Y'\subset Y$, either $\pi_1(Y')$ has no $m$-torsion, or
there exists an $m$-minimal map $f: \Sigma \to Y'$ where $\Sigma$ is a Moore surface having at most $\left(\frac{6m}{c_\epsilon}\right)$ 2-simplexes of dimension 2; in the second case the image
$X=f(\Sigma)$ is a subcomplex of $Y'$ and $f:\Sigma \to X$ induces a monomorphism $\pi_1(\Sigma)\to \pi_1(X)$, i.e.
$X\in {\mathcal X}_m$. Denote by $\mathcal X'_m\subset \mathcal X_m$ the set of complexes $X\in\mathcal X_m$ such that their 2-dimensional pure 
parts are 
$\epsilon$-admissible. 
We may apply Lemma \ref{small}  (using the upper bound $f_2(X)\leq C=\left(\frac{6m}{c_\epsilon}\right)$)
to conclude that the pure part of the image $X=f(\Sigma)$ of any $m$-minimal map $f:\Sigma \to Y$ belongs to $\mathcal X'_m$. 
However, by Theorem \ref{wedge} the fundamental group of any $X\in \mathcal X'_m$ is a free product of several copies of $\Z$ and $\Z_2$ and hence it has no $m$-torsion. This leads to a contradiction which shows that the fundamental group of any subcomplex $Y'\subset Y$ does not have $m$-torsion, a.a.s.

\end{proof}

\section{Asphericity and the Whitehead Conjecture}

 
 Recall that a connected simplicial complex $Y$ is said to be {\it aspherical} if $\pi_k(Y)=0$ for all $k\ge 2$. 
 A 2-dimensional connected simplicial complex $Y$ is aspherical if and only if $\pi_2(Y)=0$. 
 The well-known Whitehead Conjecture states that any subcomplex of an aspherical 2-complex is also aspherical, see \cite{Adams, B, CC, R}. 
 At the time of writing the Whitehead Conjecture is still open. 
 
 In this section we show that for random simplicial 2-complexes, with probability tending to one, 
 the asphericity of a subcomplex of a random complex is equivalent to the absence of {\it \lq\lq small bubbles\rq\rq}. It follows that in the random setting any subcomplex of an aspherical 2-complex is also aspherical, supporting a probabilistic analogue of the Whitehead Conjecture.

\begin{theorem}\label{aspherical} Consider a random simplicial complex $Y\in \Omega_n^r$, $r\ge 2$, with respect to the probability measure 
$\PP_{r, \p}$ where $\p=(p_0, p_1, \dots, p_r)$. 
Assume that for some $\epsilon>0$ one has
$$(np_0)^{1+\epsilon}p_1^3p_2^2\, \to \, 0.$$
Let $c_\epsilon$ be the constant given by Theorem \ref{hyp}.
Then, a random complex $Y\in\Omega_n^r$ has the following property with probability tending to 1 as $n\to \infty$: 
any subcomplex $Y'\subset Y^{(2)}$ is aspherical if and only if every pure subcomplex $S\subset Y'$ satisfying $$f_2(S)\leq C= 16^2c_\epsilon^{-2}$$ 
is collapsible to a graph. In particular, under the above assumptions, with probability tending to one, any aspherical subcomplex of $Y^{(2)}$ satisfies the Whitehead Conjecture.
\end{theorem}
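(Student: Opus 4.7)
The plan is to combine the uniform hyperbolicity of Theorem \ref{hyp} with the structural classification of admissible 2-complexes (Theorem \ref{wedge} and its supporting lemmas) and a Gromov-type local-to-global principle for spherical diagrams. With probability tending to one we may assume simultaneously that $I(Y') \ge c_\epsilon$ for every subcomplex $Y' \subset Y$ (Theorem \ref{hyp}) and that every pure 2-dimensional subcomplex of $Y$ with at most $C = 16^2 c_\epsilon^{-2}$ two-simplexes is $\epsilon$-admissible (Lemma \ref{small}); all arguments below take place in this event.

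\textbf{Forward direction.} Assume $Y' \subset Y^{(2)}$ is aspherical and let $S \subset Y'$ be pure with $f_2(S) \le C$ and not collapsible to a graph. Iteratively removing 2-simplexes through free edges terminates at a non-empty closed pure subcomplex $S_0 \subset S$, which is $\epsilon$-admissible. When $b_2(S_0) = 0$, Corollary \ref{b2=0} produces a strongly connected component of $S_0$ homeomorphic to $P^2$ or $Q^2$; when $b_2(S_0) \ge 1$, $S_0$ contains an admissible minimal cycle $Z$ homotopy equivalent to $S^2$ or $S^2 \vee S^1$ by Lemmas \ref{typeA} and \ref{typeB}. In every case, lifting to the universal cover $\tilde{Y'}$ yields an embedded 2-sphere---either a lift of the spherical summand, or the universal cover $S^2 \to P^2$ unfolded inside $\tilde{Y'}$---giving a non-zero class in $H_2(\tilde{Y'}) = \pi_2(Y')$ and contradicting asphericity.

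\textbf{Backward direction.} Assume every pure $S \subset Y'$ with $f_2(S) \le C$ collapses to a graph, and suppose for contradiction that $\pi_2(Y') \neq 0$. Choose an essential spherical diagram $f:\Sigma \to Y'$ of smallest possible area $f_2(\Sigma)$. A spherical analogue of Theorem \ref{localtoglobal} forces $f_2(\Sigma) \le C$: if $\Sigma$ were larger, a combinatorial Cheeger-type bisection would produce a short embedded loop $\gamma \subset \Sigma$ of length $|\gamma| \le K \sqrt{f_2(\Sigma)}$ splitting $\Sigma$ into discs $D_1,D_2$, and the isoperimetric inequality $I(Y') \ge c_\epsilon$ would let us replace the $D_i$ by smaller fillings in $Y'$, producing a spherical diagram (possibly a ``difference'' sphere constructed from $D_2$ and a minimal filling of $f\circ\gamma$) that is still essential but has strictly smaller area than $\Sigma$, contradicting minimality. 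Once $f_2(\Sigma) \le C$ is established, lift $f$ to $\tilde{f}:\Sigma \to \tilde{Y'}$ and let $\tilde{Z}$ be the support of the non-zero cycle $\tilde{f}_*[\Sigma] \in Z_2(\tilde{Y'})$; then $\tilde{Z}$ is a non-empty closed pure subcomplex with $f_2(\tilde{Z}) \le C$. The image $Z = p(\tilde{Z}) \subset Y'$ is closed because for any edge $e$ of $Z$ and any lift $\tilde{e}$ of $e$ in $\tilde{Z}$, the covering map $p$ sends the $\ge 2$ two-simplexes of $\tilde{Z}$ containing $\tilde{e}$ bijectively to distinct 2-simplexes of $Z$ containing $e$. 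Thus $Z \subset Y'$ is a closed pure subcomplex with $f_2(Z) \le C$ and no free edges, contradicting the hypothesis. Finally the Whitehead statement follows because the local condition is trivially inherited by subcomplexes of $Y'$.

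The principal obstacle is the spherical local-to-global estimate producing the bound $16^2 c_\epsilon^{-2}$. Unlike the disc case of Theorem \ref{localtoglobal}, the spherical surgery is subtle: a naive hemisphere replacement can kill the $\pi_2$-class entirely, so $\gamma$ and the replacement disc fillings must be chosen so that either the new spherical diagram or the ``difference'' with the old one is still essential. Making this combinatorial Cheeger-plus-surgery argument precise, with the explicit constant $256 = 16^2$, is the main technical step of the proof.
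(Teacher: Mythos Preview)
Your overall strategy is sound, and the forward direction is essentially the paper's argument (the paper handles the $P^2$/$Q^2$ case by citing Cockcroft's theorem \cite{CC}, \cite{Adams}, which is what your universal-cover sketch is reproving). The main divergence is in the backward direction, where you misjudge where the work lies: the spherical bound $f_2(\Sigma)\le 16^2 c_\epsilon^{-2}$ that you flag as the ``principal obstacle'' is exactly Lemma~\ref{lmm} (quoted from \cite{CF1}, Corollary~5.4), and the paper simply invokes it via Lemma~\ref{M}; no Cheeger-plus-surgery argument needs to be carried out here. With that bound in hand, the paper's finish is also shorter than yours: rather than lifting to $\tilde{Y'}$ and projecting the support of the pushed-forward cycle, one just takes the image $S=\phi(\Sigma)\subset Y'$, observes $f_2(S)\le C$ and $\pi_2(S)\neq 0$, notes that $S$ is $\epsilon$-admissible by Lemma~\ref{small}, and concludes directly from Lemma~\ref{lmccc} that $S$ is not collapsible to a graph. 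Your lift-and-project route is correct but unnecessary once Lemma~\ref{lmccc} is available.
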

%

%
%

The proof given below in this section will use the following auxiliary material. 

Let $Y$ be a simplicial complex with $\pi_2(Y)\not=0$. As in \cite{CF1} and \cite{CF2}, we define a numerical invariant $M(Y)\in \mathbb{Z}$, $M(Y) \ge 4$, as the minimal number of faces in a
2-complex $\Sigma$
homeomorphic to the sphere
$S^2$ such that there exists a homotopically nontrivial simplicial map $\Sigma\to Y$.

We define $M(Y)=0$, if $\pi_2(Y)=0$.

\begin{lemma}[See Corollary 5.4 in \cite{CF1}]\label{lmm} Let $Y$ be a 2-complex with $I(Y)\ge c>0$. Then
$$M(Y)\le \left(\frac{16}{c}\right)^2.$$
\end{lemma}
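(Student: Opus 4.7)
The plan is to run a ``cut, refill, and glue'' argument on a minimal spherical representative of an element of $\pi_2(Y)$. Assume $\pi_2(Y)\ne 0$ (otherwise $M(Y)=0$ and there is nothing to prove) and fix a homotopically nontrivial simplicial map $f:\Sigma\to Y$ from a triangulated 2-sphere $\Sigma$ that realises $M(Y)$; write $N=f_2(\Sigma)$ for the number of 2-simplexes of $\Sigma$. The goal is to show $N\le (16/c)^2$ by exhibiting, whenever $N$ is too large, a strictly smaller sphere mapping homotopically nontrivially to $Y$, contradicting the choice of $\Sigma$.

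The first ingredient is a planar separator lemma for triangulated spheres: for every triangulation $\Sigma$ of $S^2$ with $N$ 2-faces there is a simple simplicial cycle $\gamma\subset \Sigma^{(1)}$ splitting $\Sigma$ into two simplicial discs $D_1, D_2$ with $\partial D_1=\partial D_2=\gamma$ such that
$$|\gamma|\le 4\sqrt{N}, \qquad \min\{f_2(D_1),f_2(D_2)\}\ge N/4.$$
This can be produced by a BFS level-sweep from a vertex of $\Sigma$: one of the level spheres of radius $O(\sqrt N)$ must, by a pigeonhole on face counts across levels, be short enough and split the face count roughly in half. The precise constants $4$ and $1/4$ are what drive the final bound $(16/c)^2$, so their optimisation is the first technical task.

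Next, the loop $f(\gamma)$ is null-homotopic in $Y$ since it is already filled by $f|_{D_1}$. The hypothesis $I(Y)\ge c$ then produces a simplicial filling $\psi:V\to Y$ of $f(\gamma)$ with
$$f_2(V)\le |\gamma|/c \le 4\sqrt{N}/c.$$
Build two new triangulated 2-spheres $\Sigma_1=V\cup_\gamma D_1$ and $\Sigma_2=V\cup_\gamma D_2$ (with $V$ carrying whichever of its two orientations is required to match the boundary) and simplicial maps $g_i:\Sigma_i\to Y$ equal to $\psi$ on the $V$-piece and $f$ on the $D_i$-piece. Because $f$ factors through the simply connected $\Sigma$, the loop $f(\gamma)$ lifts to a loop in the universal cover $\tilde Y$, and both halves $f|_{D_i}$ and $\psi$ lift to disc maps into $\tilde Y$ with the same boundary. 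A chain-level computation then gives the identity
$$[\tilde g_1]+[\tilde g_2]=[\tilde f] \quad \text{in } H_2(\tilde Y)=\pi_2(Y),$$
so $[\tilde f]\ne 0$ forces at least one $g_i$ to be homotopically nontrivial.

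Finally, the minimality of $N$ applied to that $g_i$ gives $N\le f_2(\Sigma_i)=f_2(V)+f_2(D_i)$, i.e.\ $f_2(V)\ge f_2(D_{3-i})\ge \min\{f_2(D_1),f_2(D_2)\}\ge N/4$. Combined with $f_2(V)\le 4\sqrt{N}/c$ this yields $N/4\le 4\sqrt{N}/c$, hence $N\le (16/c)^2$. The main obstacles are producing the separator with the sharp constants needed to match $(16/c)^2$ and verifying the chain identity $[g_1]+[g_2]=[f]$ in $\pi_2$; the latter is geometrically obvious (the two new spheres, glued back along $\gamma$, differ from $\Sigma$ by a cancelling pair of $V$-copies) but is cleanest to record as a 2-cycle identity in $C_2(\tilde Y)$ and then transferred to $\pi_2$ via Hurewicz applied to the simply connected cover.
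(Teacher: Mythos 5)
The paper does not prove this lemma itself; it cites Corollary 5.4 of \cite{CF1}, and your cut--refill--glue argument is exactly the standard one used there (and in \cite{BHK}): separate a minimal nontrivial sphere by a short balanced simple cycle, fill that cycle cheaply using $I(Y)\ge c$, observe that the two resulting spheres sum to the original class in $\pi_2(Y)\cong H_2(\tilde Y)$ so that one of them remains nontrivial, and let minimality force $N/4\le 4\sqrt N/c$. Your lifting/Hurewicz bookkeeping and the final arithmetic are correct.

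The one step you have not actually established is the separator lemma, and the BFS sketch you offer does not prove it. In a triangulated sphere the vertices at a fixed BFS distance from the root do not in general span a single simple cycle bounding two discs --- a level set is typically a disjoint union of several cycles, possibly sharing vertices --- and even when some level is short, a pigeonhole over levels gives no control on balance: a short level can sit right next to the root, leaving almost all faces on one side. The genuine proof (Lipton--Tarjan style, as carried out in the cited sources) must split into cases according to the BFS radius, pair up two nearby short levels, and repair the unbalanced case with a spanning-tree/fundamental-cycle argument; only after that work does one obtain a simple cycle of length $\le 4\sqrt N$ with at least $N/4$ faces on each side. Since the constant $16$ in the conclusion is precisely the ratio $4/(1/4)$ of these two constants, they are not a detail to be ``optimised later'' but the content of the lemma, so this step needs either a real proof or a citation. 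A second, smaller point: $M(Y)$ is defined via complexes \emph{homeomorphic to} $S^2$, so before $V\cup_\gamma D_i$ qualifies you must replace the filling $V$ (which the paper only requires to be a contractible complex whose mapping cylinder over $S^1$ is a disc) by an honest triangulated disc with boundary $\gamma$ and no more $2$-simplexes; the paper makes the same silent replacement elsewhere, but a sentence acknowledging it would close the argument.
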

Combining this lemma with Theorem \ref{hyp} we obtain:

\begin{lemma}\label{M}
Assume that $$(np_0)^{1+\epsilon}p_1^3p_2^2\to 0$$ for some $\epsilon>0$. 
 Then there exists a constant $C_\epsilon>0$ such that a random complex $Y\in\Omega_n^r$ has the following property with probability tending to one: for any subcomplex
$Y'\subset Y^{(2)}$ one has $$M(Y') \le C_\epsilon.$$
\end{lemma}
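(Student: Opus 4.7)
The plan is to obtain Lemma \ref{M} as a direct combination of Theorem \ref{hyp} and the deterministic Lemma \ref{lmm}. The assumption of Lemma \ref{M} is literally the hypothesis (\ref{epsilon}) of Theorem \ref{hyp} (together with the implicit $np_0 \to \infty$, which follows from the companion Theorem \ref{nontrivial}-style context or must be added; in any case it is needed to invoke Theorem \ref{hyp}). So the first step is simply to apply Theorem \ref{hyp} and extract the constant $c_\epsilon > 0$ such that, with probability tending to one, every subcomplex $Y' \subset Y$ satisfies $I(Y') \ge c_\epsilon$.

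Next, I would note that any subcomplex $Y' \subset Y^{(2)}$ is in particular a subcomplex of $Y$, so the uniform bound $I(Y') \ge c_\epsilon$ applies to it. Since the isoperimetric constant depends only on the $2$-skeleton (this was recorded right after the definition of $I(X)$, namely $I(X) = I(X^{(2)})$), there is no loss in restricting attention to $2$-dimensional subcomplexes. Applying Lemma \ref{lmm} with $c = c_\epsilon$ then gives
\begin{equation*}
M(Y') \,\le\, \left(\frac{16}{c_\epsilon}\right)^2
\end{equation*}
for every such $Y'$, on the same probability-one event.

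Setting $C_\epsilon = (16/c_\epsilon)^2$ completes the proof. There is essentially no obstacle: the entire content is packaged in Theorem \ref{hyp} (which is where the real probabilistic work happens, via $\epsilon$-admissibility, Lemma \ref{small}, and the local-to-global principle) and in the deterministic geometric group theory input of Lemma \ref{lmm} (quoted from \cite{CF1}). The only thing to be careful about is the logical quantifier order: Theorem \ref{hyp} gives one single event on which the lower bound on $I$ holds for \emph{every} subcomplex simultaneously, and on that same event the bound on $M$ therefore holds for every subcomplex simultaneously, which is exactly what Lemma \ref{M} asserts.
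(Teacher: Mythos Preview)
Your proposal is correct and matches the paper's own argument exactly: the paper simply writes ``Combining this lemma with Theorem \ref{hyp} we obtain'' before stating Lemma \ref{M}, i.e.\ it applies Theorem \ref{hyp} to get $I(Y')\ge c_\epsilon$ for all subcomplexes and then invokes Lemma \ref{lmm} to deduce $M(Y')\le (16/c_\epsilon)^2$. Your remarks about the quantifier order and the implicit hypothesis $np_0\to\infty$ are accurate observations, but no additional idea is needed beyond what you wrote.
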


Hence, if we want to find homotopically nontrivial simplicial maps from $S^2$ to a random complex it is sufficient to consider triangulations of $S^2$ 
having at most $C_\epsilon$ 2-simplexes. 

The following lemma will be used in the proof of Theorem \ref{aspherical}. 

\begin{lemma}\label{lmccc}
Let $S$ be a connected admissible $2$-complex (see Definition \ref{eps}). Then $S$ is aspherical if and only if $S$ is simplicially collapsible to a graph.
\end{lemma}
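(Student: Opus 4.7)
The plan is as follows. The backward direction is immediate: elementary simplicial collapses are homotopy equivalences, so if $S$ collapses to a graph $G$ then $S\simeq G$, which is aspherical. For the forward direction, I would begin by pinning down the homotopy type using Theorem~\ref{wedge}: any admissible $2$-complex is homotopy equivalent to a wedge of circles, spheres, and copies of $P^2$. Asphericity forces $\pi_2(S)=0$, and any $S^2$ or $P^2$ wedge summand would contribute nontrivially to $\pi_2$ (via the retraction onto that summand), so neither can appear. Hence $S$ is homotopy equivalent to a wedge of circles; in particular $H_2(S;\mathbb{Z})=H_2(S;\mathbb{Z}/2)=0$ and $\pi_1(S)$ is free.

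I would then proceed by induction on $f_2(S)$: the base case $f_2(S)=0$ is trivial, and for the step it is enough to exhibit a \emph{free edge} (an edge of degree~$1$). Collapsing across such an edge produces a subcomplex $S'\subset S$ that remains admissible (any $\epsilon$-admissibility witness $(\alpha_1,\alpha_2)$ for $S$ also witnesses admissibility of every subcomplex, by Definition~\ref{eps}), remains aspherical (elementary collapses are homotopy equivalences), and satisfies $f_2(S')=f_2(S)-1$; the induction hypothesis then gives a collapse of $S'$ to a graph, and hence of $S$.

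The main obstacle is the existence of a free edge, and this is where Corollary~\ref{b2=0} enters. Suppose to the contrary that every edge of $S$ has degree $0$ or $\geq 2$, and let $P\subset S$ be the pure $2$-dimensional part. Then $P$ is closed and admissible, and since for a $2$-complex $H_2$ coincides with the module of $2$-cycles, the chain inclusion $C_2(P)\hookrightarrow C_2(S)$ gives $H_2(P;\mathbb{Z})\hookrightarrow H_2(S;\mathbb{Z})=0$, so $b_2(P)=0$. I would then decompose $P$ into its strongly connected components $C_1,\dots,C_k$. Any two $2$-simplices sharing an edge are adjacent in the dual graph and hence lie in a common $C_i$, so the $C_i$ are pairwise edge-disjoint, the chain complex of $P$ splits in degrees $1$ and $2$, and each $C_i$ is itself a closed, strongly connected, admissible $2$-complex with $b_2(C_i)=0$. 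Applying Corollary~\ref{b2=0} identifies each $C_i$ with a triangulation of $P^2$ or of $Q^2$. Both carry a nonzero class in $H_2(-;\mathbb{Z}/2)$: every edge of $P^2$ has degree $2$, while in $Q^2$ the identified edge has degree $4$ and all others degree $2$, so $\sum_{\sigma}\sigma$ is a nonzero $\mathbb{Z}/2$-cycle in either case. Composing with the inclusion $C_i\hookrightarrow S$ (injective on $2$-cycles, hence on $H_2(-;\mathbb{Z}/2)$) produces a nonzero element of $H_2(S;\mathbb{Z}/2)$, contradicting the vanishing established above. This contradiction yields a free edge and closes the induction.
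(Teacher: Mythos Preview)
Your proof is correct and follows an outline similar to the paper's, but differs in the tools used at the key step. Both arguments reduce to showing that an admissible aspherical $2$-complex with a $2$-simplex must have a free edge, and both invoke Corollary~\ref{b2=0} to identify the strongly connected pieces of the closed pure part as $P^2$ or $Q^2$. The paper then splits into cases on $b_2$ of the collapsed complex $S'$: for $b_2(S')\ge 1$ it locates a minimal cycle and uses Lemmas~\ref{typeA} and~\ref{typeB} to exhibit an $S^2$ wedge summand (contradicting $\pi_2=0$), while for $b_2(S')=0$ it appeals to Cockcroft's theorem \cite{CC} to conclude $\pi_2(S')\neq 0$. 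You instead front-load Theorem~\ref{wedge} to pin down the homotopy type as a wedge of circles and read off $H_2(S;\mathbb{Z})=H_2(S;\mathbb{Z}/2)=0$; this immediately forces $b_2(P)=0$ (so the first case never arises) and lets you obtain the contradiction purely homologically, via the explicit $\mathbb{Z}/2$-cycle $\sum_\sigma\sigma$ in $P^2$ or $Q^2$, rather than via Cockcroft. Your route is thus more self-contained (no external citation needed), at the cost of invoking the heavier Theorem~\ref{wedge}, which itself already packages Lemmas~\ref{typeA}, \ref{typeB} and Corollary~\ref{b2=0}.
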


\begin{proof} Let $S$ be a connected admissible $2$-complex satisfying $\pi_2(S)=0$.
Performing all possible simplicial collapses of 2-simplexes we may find a closed pure subcomplex $S'\subset S$ 
without free edges. We need to show that
$S'$ is 1-dimensional. We shall assume below that $\dim S'=2$ and arrive to a contradiction. 

If $b_2(S')\ge 1$ then $S'$ contains an admissible minimal cycle $Z\subset S'$ as a subcomplex. Using Lemmas \ref{typeA} and \ref{typeB} (depending on whether $Z$ is of type A or B) we find a 2-simplex $\sigma\subset Z$ such that $\partial \sigma$ is null-homotopic in $Z-\int(\sigma)$. Then $S'$ is homotopy equivalent to the wedge $S'-\int(\sigma)\vee S^2$ contradicting the assumption $\pi_2(S')=0$. 

Hence we must assume that $b_2(S')=0$. Using Corollary \ref{b2=0} we see that every strongly connected component of $S'$ is homeomorphic to either $P^2$ or to the quotient $Q^2$ of $P^2$ with two adjacent edges identified. In both cases we may apply a theorem of Cockcroft \cite{CC} (see also Adams \cite{Adams}) which 
claims that $\pi_2(S')\neq 0$, in contradiction with our hypothesis. 
\end{proof}

\begin{lemma}\label{lmccc1}
Let $S$ be a connected admissible $2$-complex. If $S$ is not aspherical then $S$ contains a subcomplex which is homotopy equivalent to one of 
$S^2$, $S^2\vee S^1$ or $P^2$. 
\end{lemma}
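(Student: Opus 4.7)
The plan is to combine the contrapositive of Lemma \ref{lmccc} with a case analysis based on the second Betti number of the complex obtained after maximal simplicial collapses, using Lemmas \ref{typeA}, \ref{typeB} and Corollary \ref{b2=0} to identify the desired subcomplex up to homotopy.

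First, since $S$ is admissible and not aspherical, the contrapositive of Lemma \ref{lmccc} gives that $S$ is not simplicially collapsible to a graph. I would perform all possible elementary collapses of pairs (2-simplex, free edge) starting from $S$; these are homotopy equivalences, so the resulting subcomplex $S'\subset S$ is pure, closed (no free edges), homotopy equivalent to $S$, and in particular still non-aspherical. Since $S$ does not collapse to a graph, $\dim S'=2$. Note that $S'$ is also admissible, as admissibility is inherited by subcomplexes.

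Now I would split into two cases according to whether $b_2(S')>0$ or $b_2(S')=0$. In the first case, $S'$ contains a minimal cycle $Z$ (the support of a nontrivial $2$-cycle of minimal size), and $Z$ is admissible. Applying Lemma \ref{typeA} or Lemma \ref{typeB} depending on the type of $Z$, one concludes directly that $Z$ is homotopy equivalent to $S^2$ or to $S^2\vee S^1$, and $Z\subset S$ is the desired subcomplex. In the second case, I would look at a strongly connected component $T$ of $S'$: being obtained from a closed pure $2$-complex by grouping $2$-simplices via edge-adjacency, $T$ is itself closed (every edge of $T$ is incident to $\ge 2$ faces, all of which lie in the same component), strongly connected, admissible, and has $b_2(T)=0$ (otherwise $b_2(S')>0$). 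Corollary \ref{b2=0} then forces $T\cong P^2$ or $T\cong Q^2$; in the former case $T\subset S$ is already a subcomplex homotopy equivalent to $P^2$, and we are done.

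The main obstacle is the subcase $T\cong Q^2$, where $T$ is not itself homeomorphic to $P^2$ and moreover cannot contain $P^2$ as a simplicial subcomplex (e.g.\ for size reasons, since minimal triangulations of $P^2$ require $6$ vertices). My plan for this step is to show that $Q^2\simeq P^2$ as topological spaces. Writing $Q^2=P^2/(e_1\sim e_2)$ for the identification of two adjacent edges $e_1,e_2\subset P^2$, let $[e]\subset Q^2$ denote the resulting single edge. Then $[e]$ is a contractible subcomplex of $Q^2$, so $Q^2\simeq Q^2/[e]$; but $Q^2/[e]$ is canonically identified with $P^2/(e_1\cup e_2)$, and $e_1\cup e_2$ is a tree and hence a contractible subcomplex of $P^2$, giving $P^2/(e_1\cup e_2)\simeq P^2$. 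Combining these equivalences, $T\simeq P^2$, and $T\subset S$ is the required subcomplex. This collapsing/quotient argument is the delicate point; the rest of the proof is a fairly mechanical assembly of the already established lemmas.
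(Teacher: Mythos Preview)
Your proof is correct and follows essentially the same route as the paper: reduce to a closed pure $2$-dimensional subcomplex via collapses, then split on $b_2$, invoking Lemmas \ref{typeA}, \ref{typeB} in the case $b_2>0$ and Corollary \ref{b2=0} in the case $b_2=0$. Two small remarks: (i) after collapsing, $S'$ need not be \emph{pure} (there may be leftover edges of degree $0$), but this is harmless since your argument proceeds via a strongly connected component $T$, which is indeed pure and closed; (ii) your explicit verification that $Q^2\simeq P^2$ is a genuine addition --- the paper's proof simply ``invokes Corollary \ref{b2=0}'' without addressing the $Q^2$ outcome, so your quotient argument (collapsing the contractible edge $[e]$ in $Q^2$ and the contractible tree $e_1\cup e_2$ in $P^2$) fills a gap the paper leaves implicit.
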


\begin{proof} Let $S$ be a connected admissible $2$-complex; without loss of generality we may assume that $S$ is closed and pure. If $b_2(S)\not=0$ then 
$S$ contains an admissible minimal cycle $Z\subset S$ and $Z$ is homotopy equivalent to either $S^2$, $S^2\vee S^1$ (in the case of type A, see Lemma \ref{typeA}), or to $P^2$ (in the case of type B, see Lemma \ref{typeB}). 
In the case $b_2(S)=0$ we invoke Corollary \ref{b2=0}. 
\end{proof}

\begin{proof}[Proof of Theorem \ref{aspherical}] Consider the set $\mathcal S$ of isomorphism types of all pure 2-comp\-lexes $S$ satisfying $f_2(S)\le C$. 
We may represent $\mathcal S$ as the disjoint union $\mathcal S = \mathcal S'\cup \mathcal S''$ where the complexes $S\in \mathcal S'$ are $\epsilon$-admissible 
and the complexes $S\in \mathcal S''$ are not. 

Let $\Omega_n'\subset \Omega_n^r$ be the set of complexes $Y\in \Omega_n^r$ such that (a) no $S\in \mathcal S''$ can be embedded into $Y$ and (b)
each $Y\in \Omega'_n$ satisfies the conclusion of Lemma \ref{M}. 
By Lemmas \ref{small} and \ref{M} we know that $\PP_{r, \p}(\Omega_n')\to 1$ as $n\to \infty$.

Suppose that $Y\in \Omega_n'$ and 
let $Y'\subset Y^{(2)}$ be an aspherical subcomplex. 
For any pure subcomplex $S\subset Y'$  with $f_2(S)\leq C$ we know that $S\in \mathcal S'$ and by Lemma \ref{lmccc1} either $S$ is aspherical or it contains a subcomplex homotopy equivalent to either $S^2$ or $P^2$. Both these possibilities would imply $\pi_2(Y')\not=0$ (the case of $S^2$ is obvious and the case of $P^2$ follows from the work of Cockcroft \cite{CC}, see also \cite{Adams}. Thus we see that in $Y'\subset Y$ is aspherical then any 
subcomplex $S\subset Y'$ with $f_2(S)<C$ is also aspherical; the latter due to Lemma \ref{lmccc} is equivalent for $S$ to be collapsible to a graph.

We now prove the inverse implication by assuming that $Y'\subset Y$ is non-aspherical and $Y\in \Omega_n'$. 
By Lemma \ref{M} there exists a 2-complex 
$\Sigma$ homeomorphic to a 2-sphere with $f_2(\Sigma)\leq C$ and a homotopically nontrivial simplicial map $\phi:\Sigma\to Y'$. 
We denote $S=\phi(\Sigma)$ and thus we have $f_2(S)\leq f_2(\Sigma)\leq C$ and $\pi_2(S)\neq 0$. 
Hence we conclude that if $Y'\subset Y$ is not aspherical then there exists a subcomplex $S\subset Y'$, $S\in \mathcal S'$, and $f_2(S)\leq C$ and $\pi_2(S)\neq 0$. 

This completes the proof.
\end{proof}

\section{Geometric and cohomological dimension of the fundamental group of a random simplicial complex}

\begin{theorem}\label{cd2} Consider a random simplicial complex $Y\in \Omega_n^r$ with respect to the multi-parameter probability measure $\PP_{r, \p}$, where $\p=(p_0, p_1, \dots, p_r)$, $r\ge 2$. Assume that
\begin{eqnarray}\label{projembeds}
np_0p_1^{5/2}p_2^{5/3}\to 0
\end{eqnarray}
Then for any choice of the base point $y_0\in Y$ the fundamental group $\pi_1(Y, y_0)$ has
geometric dimension at most $2$, a.a.s. In particular, the group
$\pi_1(Y,y_0)$ has cohomological dimension at most 2 and is torsion free, a.a.s.
\end{theorem}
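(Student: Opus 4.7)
The plan is to construct, inside the connected component $Y_0$ of $y_0$ in $Y^{(2)}$, a $2$-dimensional aspherical subcomplex $Y_0^{\ast}$ with $\pi_1(Y_0^{\ast},y_0) = \pi_1(Y,y_0)$. Existence of such a $Y_0^{\ast}$ at once gives $\mathrm{gd}(\pi_1(Y,y_0)) \le 2$, from which $\mathrm{cd} \le 2$ and torsion-freeness follow automatically (any $\Z_m \le \pi_1$ would force $\mathrm{cd} = \infty$). To apply the earlier machinery I would choose $\epsilon = 1/5$ and observe that
$$(np_0)^{1+\epsilon}\, p_1^3\, p_2^2 \;=\; \bigl(np_0\, p_1^{5/2}\, p_2^{5/3}\bigr)^{6/5} \;\to\; 0,$$
so Theorem \ref{hyp}, Lemma \ref{small}, Lemma \ref{M}, and Theorem \ref{aspherical} all apply, with a constant $c_\epsilon > 0$ and bound $C = (16/c_\epsilon)^2$.

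Next I would use the hypothesis (\ref{projembeds}) together with Theorem \ref{thm1}\textbf{A} to exclude, a.a.s., every simplicial subcomplex of $Y$ having at most $C$ faces and homeomorphic either to a triangulation of the real projective plane $P^2$ or to the quotient $Q^2$ appearing in Corollary \ref{b2=0}. There are only finitely many such isomorphism types; each is balanced by Theorem~4.4 of \cite{CF14} and has $f_0 \ge 6$, so its ratios satisfy $f_1/f_0 = 3 - 3/f_0 \ge 5/2$ and $f_2/f_0 = 2 - 2/f_0 \ge 5/3$. Hence for each such complex the quantity $n p_0\, p_1^{f_1/f_0} p_2^{f_2/f_0}$ is bounded above by $n p_0\, p_1^{5/2} p_2^{5/3}$, which tends to zero by hypothesis.

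Finally I would asphericize $Y_0$ iteratively: set $W_0 = Y_0$, and while some $2$-simplex $\sigma \subset W_i$ has $\partial \sigma$ null-homotopic in $W_i - \mathrm{int}(\sigma)$, replace $W_i$ by $W_i - \mathrm{int}(\sigma)$. By van Kampen each step preserves $\pi_1$, and since $f_2$ strictly decreases the sequence stabilises at some $Y_0^{\ast} \subset Y_0$ with $\pi_1(Y_0^{\ast}, y_0) = \pi_1(Y, y_0)$. To verify that $Y_0^{\ast}$ is aspherical, suppose otherwise; then Theorem \ref{aspherical} produces a pure subcomplex $S \subset Y_0^{\ast}$ with $f_2(S) \le C$ that is not collapsible to a graph. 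By Lemma \ref{small}, $S$ is $\epsilon$-admissible; by Lemma \ref{lmccc} it is not aspherical; and by Lemma \ref{lmccc1} it contains a subcomplex homotopy equivalent to $S^2$, $S^2 \vee S^1$, or $P^2$. The $P^2$ case (arising either from a strongly connected $b_2 = 0$ component via Corollary \ref{b2=0}, or from the core of a type-B minimal cycle) forces an embedded $P^2$ or $Q^2$ in $Y$, contradicting the previous paragraph. In the remaining cases $S$ contains a minimal cycle $Z$ of type A, and Lemma \ref{typeA} supplies a $2$-simplex $\sigma \subset Z$ whose boundary is null-homotopic in $Z - \mathrm{int}(\sigma) \subset Y_0^{\ast} - \mathrm{int}(\sigma)$, contradicting termination. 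The main obstacle will be the second step: producing uniformly, over all triangulations of $P^2$ and $Q^2$ of bounded size, the balancedness and ratio bounds needed to invoke Theorem \ref{thm1}\textbf{A}, and correctly cataloguing the finite list of ``$P^2$-like'' obstructions thrown up by Lemma \ref{lmccc1}.
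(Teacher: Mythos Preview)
Your proposal is correct and follows essentially the same route as the paper's proof: both fix a small $\epsilon$ so that the hyperbolicity/asphericity machinery (Theorems \ref{hyp}, \ref{aspherical}, Lemma \ref{small}) applies, both exclude small embedded $P^2$- and $Q^2$-subcomplexes via Theorem \ref{thm1}\textbf{A}, and both asphericize $Y^{(2)}$ by iteratively deleting $2$-simplexes whose boundaries stay null-homotopic, then verify asphericity of the result using the dichotomy ``minimal cycle of type A'' versus ``$P^2/Q^2$ core or component''. Your choice $\epsilon = 1/5$ with the identity $(np_0)^{6/5}p_1^3p_2^2 = (np_0p_1^{5/2}p_2^{5/3})^{6/5}$ is slightly cleaner than the paper's $\epsilon = 1/10$.

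One clarification on the step you flagged as the main obstacle: you do \emph{not} need balancedness of the $P^2$- or $Q^2$-triangulations to invoke Theorem \ref{thm1}\textbf{A}. That theorem only requires the \emph{minimum} over subcomplexes to tend to zero, so it suffices to exhibit a single subcomplex---namely $S$ itself---for which $np_0p_1^{f_1(S)/f_0(S)}p_2^{f_2(S)/f_0(S)}\to 0$. For triangulations of $P^2$ one has $f_1/f_0 = 3 - 3/f_0 \ge 5/2$ and $f_2/f_0 = 2 - 2/f_0 \ge 5/3$ directly from $\chi=1$, $3f_2=2f_1$, $f_0\ge 6$. For $Q^2$ (which is \emph{not} a surface, so Theorem 4.4 of \cite{CF14} does not apply, and which can have $f_0 = 5$), the paper instead compares with the parent $P^2$-triangulation $T$: writing $f_0(S)=f_0(T)-1$, $f_1(S)=f_1(T)-1$, $f_2(S)=f_2(T)$ one checks $f_1(S)/f_0(S)\ge 14/5 > 5/2$ and $f_2(S)/f_0(S)=2 > 5/3$. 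Either way the bound $np_0p_1^{5/2}p_2^{5/3}\to 0$ suffices, and balancedness plays no role.
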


\begin{proof} We will show that with probability tending to one a random complex $Y\in \Omega_n^r$ contains a 2-dimension subcomplex 
$Y'\subset Y$ such that 
\begin{itemize}
\item[(1)] $Y^{(1)}=Y'^{(1)}$; 

\item[(2)]  $\pi_1(Y',y_0)\simeq \pi_1(Y, y_0)$ for any vertex $y_0\in Y$;

\item[(3)] any connected component of $Y'$ is aspherical. 
\end{itemize}
This would clearly imply the statement of Theorem \ref{cd2}. 

Note that our assumption (\ref{projembeds}) implies the condition (\ref{epsilon}) for any $\epsilon < 1/5$. 
Hence we may apply Theorem \ref{hyp} and Theorem 
\ref{aspherical}. To be specific we may set $\epsilon =1/10$ and denote by $c_\epsilon>0$ the constant given by Theorem \ref{hyp}. 

Denote by $\Omega'_n\subset \Omega_n^r$ the set of complexes $Y\in \Omega_n^r$ satisfying the following conditions: 
\begin{itemize}

\item[(a)]  For $Y\in \Omega'_n$ any 2-dimensional pure subcomplex $S\subset Y$ with $f_2(S) \le C$ is admissible. Here $C$ denotes $16^2 c_\epsilon^2$.

\item[(b)] For $Y\in \Omega'_n$ a subcomplex $Y'\subset Y^{(2)}$ is aspherical if and only if every subcomplex $S\subset Y'$ satisfying $f_2(S) <C$ is collapsible to a graph. 

\item[(c)] Any complex $Y\in \Omega'_n$ has no closed admissible 2-dimensional pure subcomplexes $S\subset Y$ with $f_2(S)\le C$ satisfying $b_2(S)=0$. 
\end{itemize}
We know that $\PP_{r, \p}(\Omega'_n) \to 1$ due to 
Lemma \ref{small} and Theorem \ref{aspherical}; to explain that the property (c) can be achieved we observe that by Corollary \ref{b2=0} any such $S$ is either a triangulation of $P^2$ or the quotient $Q^2$ of $P^2$ obtained by identifying two adjacent edges in certain triangulation. Assuming that $S$ is a triangulation of $P^2$ we have 
$f_0(S)-f_1(S)+f_2(S)=1$, $3f_2(S)=2f_1(S)$ and $f_0(S)\ge 6$ implying that $f_1(S)\ge 15$ and $f_2(S)\ge 10$ and also
$$\frac{f_0(S)}{f_1(S)} = \frac{1}{3} + \frac{1}{f_1(S)} \, \le\,  2/5,$$
$$\frac{f_0(S)}{f_2(S)} = \frac{1}{2} + \frac{1}{f_2(S)} \, \le\,  3/5.$$
Therefore, 
$$np_0p_1^{\frac{f_1(S)}{f_0(S)}}p_2^{\frac{f_2(S)}{f_0(S)}}\, \le\,  np_0p_1^{5/2}p_2^{5/3} \to 0$$
because of our assumption (\ref{projembeds}). In the case when $S$ is obtained from a triangulation $T$ of $P^2$ by identifying two adjacent edges we have
$f_0(S)=f_0(T)-1$, $f_1(S)=f_1(T)-1$ and $f_2(S)=f_2(T)$. Then 
$$\frac{f_i(T)}{f_0(T)} \, \ge\,  \frac{f_i(S)}{f_0(S)}, \quad\quad  i=1, 2,$$
and we obtain
$$np_0p_1^{\frac{f_1(S)}{f_0(S)}}p_2^{\frac{f_2(S)}{f_0(S)}}\, \le\,  np_0p_1^{\frac{f_1(T)}{f_0(T)}}p_2^{\frac{f_2(T)}{f_0(S)}} \to 0$$
as shown above. Hence our statement follows by invoking Theorem \ref{thm1}. 

Given a complex $Y\in \Omega_n'$ consider an admissible minimal cycle $Z\subset Y$ with $f_2(Z)\le C$. 
Using Lemma \ref{typeA} and Lemma \ref{typeB} we may find a 2-simplex $\sigma \subset Z\subset Y$ such that removing it we do not change the fundamental group. Therefore we may inductively obtain a sequence of subcomplexes $$Y_0=Y^{(2)} \supset Y_1\supset Y_2 \dots\subset Y_N$$ with each complex $Y_{i+1}$ obtained from the previous $Y_i$ by removing the interior 
of a 2-simplex $\sigma\subset Y_i$ such that $\partial \sigma$ is null-homotopic in $Y_i-\int(\sigma)$. Let $Y'=Y_N\subset Y$ be the final complex in this sequence. 

We claim that the obtained complex $Y'$ is aspherical. Let $S\subset Y'$ be a pure subcomplex with $f_2(S)\le C$. 
Then $b_2(S)=0$ since otherwise $Y'$ would contain an admissible minimal cycle $Z$ with $f_2(Z)\le C$ contradicting our construction (here we use (a)). 
By (c) the complex $Y'$ contains no closed admissible 2-dimensional pure subcomplexes $S\subset Y$ with $f_2(S)\le C$ satisfying $b_2(S)=0$. Hence any 
subcomplex $S\subset Y'$ with $f_2(S)\le C$ is collapsible to a graph. By property (b) the complex $Y'$ is aspherical. 

This completes the proof. 

\end{proof}

Next we state an analogue of Theorem \ref{cd2} in the special case when $p_2=1$. 
\begin{theorem}\label{cd22}
Consider a random simplicial complex $Y\in \Omega_n^r$ with respect to the multi-parameter probability measure $\PP_{r, \p}$, where $\p=(p_0, p_1, \dots, p_r)$. Assume that $r\ge 2$ and $p_2=1$ and besides,
\begin{align}\label{3011}
np_0p_1^{30/11}\to 0. 
\end{align}
Then for any choice of the base point $y_0\in Y$ the fundamental group $\pi_1(Y, y_0)$ has
geometric dimension at most $2$, a.a.s. In particular, the group
$\pi_1(Y,y_0)$ has cohomological dimension at most 2 and is torsion free, a.a.s.
\end{theorem}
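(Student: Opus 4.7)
The plan is to follow the strategy of the proof of Theorem \ref{cd2}, with the key modification that the excluded subcomplexes (analogous to condition (c) there) are the clean triangulations of $P^2$ and $Q^2$, whose minimal examples have $11$ vertices and $30$ edges (cf.\ the discussion preceding Theorem \ref{2torsionb}). First I would verify that $np_0p_1^{30/11}\to 0$ together with $p_2=1$ implies $(np_0)^{1+\epsilon}p_1^3p_2^2\to 0$ for some small $\epsilon>0$: since $3/(1+\epsilon)>30/11$ when $\epsilon<1/10$ and $\log p_1\le 0$, we have $p_1^{3/(1+\epsilon)}\le p_1^{30/11}$, so $(np_0)^{1+\epsilon}p_1^3=(np_0)\cdot ((np_0)p_1^{3/(1+\epsilon)})^{\epsilon}\cdot p_1^{3/(1+\epsilon)\cdot(1-\epsilon/\ldots)}\to 0$ after rearrangement. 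Consequently Theorem \ref{hyp}, Lemma \ref{small}, and Theorem \ref{aspherical} all apply; let $c_\epsilon$ be the constant of Theorem \ref{hyp} and set $C=16^2c_\epsilon^{-2}$.

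Next I would define $\Omega_n'\subset\Omega_n^r$ by three conditions: (a) every pure $2$-dimensional subcomplex $S\subset Y$ with $f_2(S)\le C$ is $\epsilon$-admissible; (b) the asphericity criterion of Theorem \ref{aspherical} applies; (c) no clean triangulation of $P^2$ or $Q^2$ with $f_2\le C$ embeds in $Y$. For (c), using balancedness of the minimal clean triangulation $S_0$ of $P^2$ (Theorem 4.4 of \cite{CF14}) together with Theorem \ref{thm1}\,\textbf{A} applied with $T=S_0$, the embedding probability is bounded by $np_0p_1^{30/11}p_2^{20/11}=np_0p_1^{30/11}\to 0$. Larger clean triangulations of $P^2$ satisfy $f_1/f_0=3-3/f_0\ge 30/11$ by the Euler formula, so the same estimate applies, and clean $Q^2$'s enjoy analogous inequalities. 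Hence $\PP_{r,\p}(\Omega_n')\to 1$.

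I would then construct $Y'\subset Y^{(2)}$ iteratively by removing $2$-simplices $\sigma$ such that $\partial\sigma$ is null-homotopic in the remainder, preserving $\pi_1$ at each step, and argue that $Y'$ is aspherical via Theorem \ref{aspherical} and Lemmas \ref{lmccc}, \ref{lmccc1}. Admissible minimal cycles of either type are eliminated by the construction. The remaining obstructions from Corollary \ref{b2=0} are $P^2$ or $Q^2$ subcomplexes. Clean ones are excluded by (c). For a non-clean $P^2$ on $k<11$ vertices, its $1$-skeleton contains a $3$-clique not in the $P^2$; since $p_2=1$, the corresponding triangle is a $2$-simplex of $Y^{(2)}$, and using this extra simplex together with cone-disc fillings through a fourth vertex of the $P^2$'s vertex set, one shows every $2$-simplex of the non-clean $P^2$ is removable in the construction. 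The same argument applies to non-clean $Q^2$.

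The main obstacle is making the cone-disc removal argument rigorous through successive iterations, especially when the $1$-skeleton of a non-clean $P^2$ on $k\in\{7,\dots,10\}$ vertices is not complete (so a direct cone through a common neighbor of $a,b,c$ may fail), and ensuring the process does not stabilize at a residual closed admissible configuration—for instance, the $2$-simplices complementary to the removed $P^2$ inside the flag complex on the same vertex set could themselves form a $P^2$-type structure. This requires a careful inductive analysis replacing cone discs by longer Van Kampen diagrams built inside the flag $2$-complex on the $P^2$'s vertex set (available since $p_2=1$), combined with the finiteness of isomorphism types of obstructions with $f_2\le C$. Once $Y'$ is shown to be aspherical and $2$-dimensional with $\pi_1(Y')\cong\pi_1(Y,y_0)$, we conclude $\mathrm{gd}(\pi_1(Y,y_0))\le 2$.
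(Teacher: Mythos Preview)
Your overall strategy matches the paper's: both follow the proof of Theorem \ref{cd2}, replacing the exclusion of all small $P^2$ (and $Q^2$) subcomplexes by the exclusion of \emph{clean} ones, which is precisely what $np_0p_1^{30/11}\to 0$ guarantees via Theorem \ref{thm1}. Your verification that (\ref{3011}) with $p_2=1$ implies $(np_0)^{1+\epsilon}p_1^3p_2^2\to 0$ for $\epsilon<1/10$ is correct (since $3/(1+\epsilon)>30/11$ and $p_1\le 1$).

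You have, however, put your finger on a genuine subtlety that the paper's three-line proof glosses over. Under (\ref{3011}) with $p_2=1$, non-clean triangulations of $P^2$ with fewer than $11$ vertices are \emph{not} excluded as subcomplexes of $Y$; indeed for $p_i=n^{-\alpha_i}$ with $11/30<\alpha_1<2/5$ the $6$-vertex $P^2$ embeds a.a.s. Hence condition (c) of the proof of Theorem \ref{cd2} fails verbatim, and one must explain why such $P^2$'s do not survive in the terminal complex $Y'$. Your cone-disc repair is in the right direction but unnecessarily complicated. A cleaner route is the following dichotomy. Suppose $Y'$ contains a non-clean $P^2$ (or $Q^2$) subcomplex $S$ with a $3$-clique $\{a,b,c\}$ and $abc\notin S$; since $p_2=1$ one has $abc\in Y$. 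If $abc\in Y'$ then $S\cup\{abc\}\subset Y'$ has $b_2=1$ and at most $C$ faces, hence contains an admissible minimal cycle, contradicting the termination of the removal process. If $abc\notin Y'$ then $abc$ was removed at some stage from a minimal cycle $Z\subset S\cup\{abc\}$; but since $P^2$ has no proper closed strongly connected subcomplex, the core of $Z$ (if $Z$ is of type B) must equal $S$ itself, and for type A every $2$-simplex of $Z$ is removable. In either case one may, at that stage, remove a $2$-simplex of $S$ rather than $abc$, so the removal procedure can be arranged to destroy every such $S$. This refinement of the removal process replaces your inductive cone-disc analysis and is what the paper implicitly imports from \cite{CF2}; once you make this choice of $\sigma$ explicit, the rest of your argument goes through exactly as in Theorem \ref{cd2}.
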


\begin{proof} The proof is similar to the proof of Theorem \ref{cd2}. The only difference is that in the case when $p_2=1$ we have a different requirement 
(\ref{3011}) on the absence of  embeddings of {\it clean} triangulations of the real projective plane $P^2$ into a random 2-complex. 
\end{proof}
\begin{figure}[h] 
   \centering
   \includegraphics[width=3in]{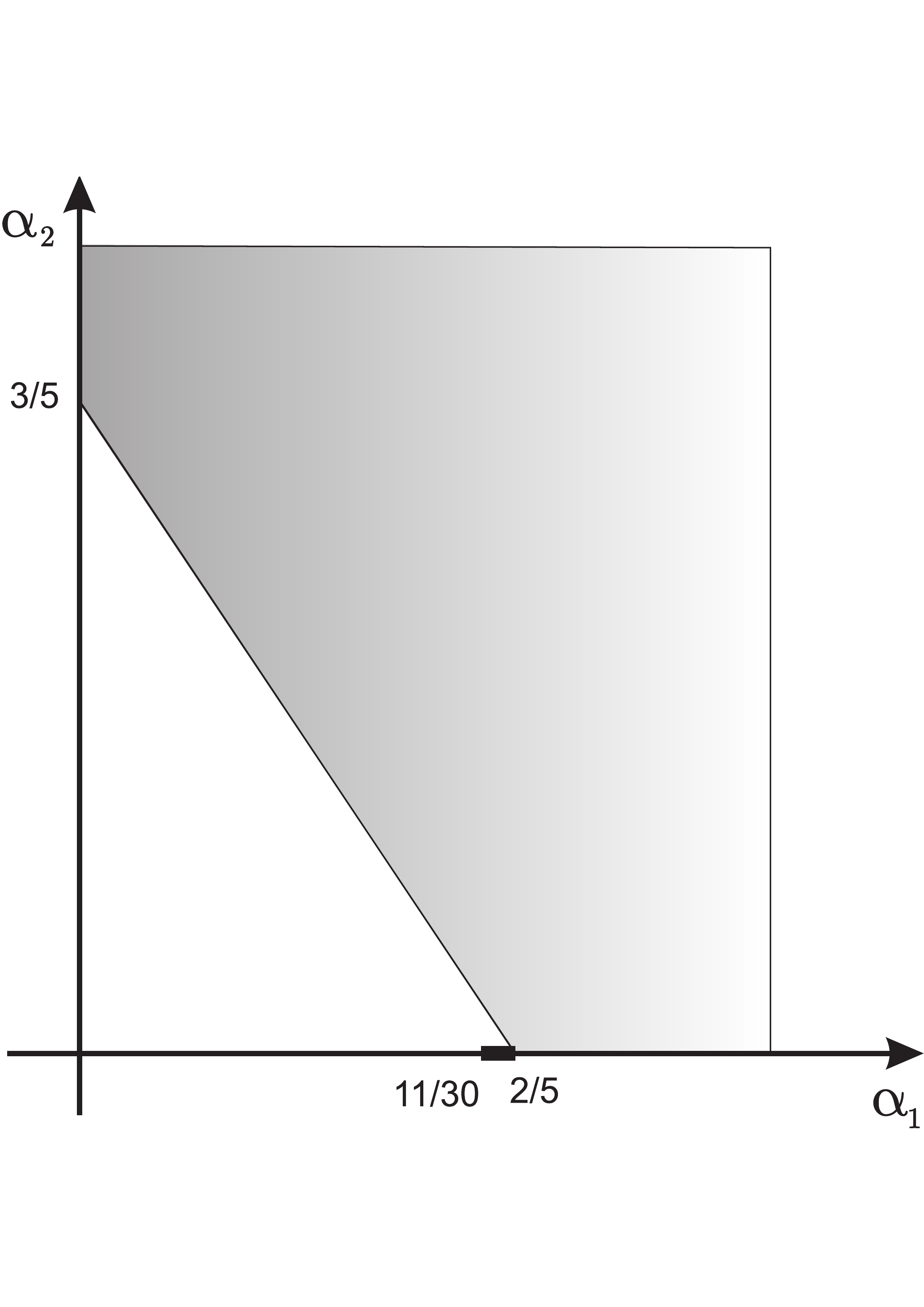} 
   \caption{The region on the plane of $\alpha_1, \alpha_2$ where the geometric dimension of the fundamental group $\pi_1(Y)$ equals 2.}
   \label{fig:cd2}
\end{figure}

Theorems \ref{cd2} and \ref{cd22} complement Theorems \ref{2torsiona} and \ref{2torsionb} about the existence of 2-torsion. 

\begin{remark}{\rm 
In the special case when $p_i=n^{-\alpha_i}$ with $\alpha_i\ge 0$ constants, where $i=0, 1, \dots$ Theorem \ref{cd2} states that the fundamental group of a random complex 
$Y$ has geometric dimension at most 2 assuming that either
\begin{eqnarray*}
&&\alpha_0+\frac{5}{2} \alpha_1 + \frac{5}{3} \alpha_2>1, 
\end{eqnarray*}
or 
\begin{eqnarray*} \alpha_0+\frac{30}{11} \alpha_1 >1, \quad \mbox{and}\quad \alpha_2=0.
\end{eqnarray*}
See Figure \ref{fig:cd2}; the corresponding set is represented by the shaded area together with an open interval on the $\alpha_1$ axis. 

These results were proven in \cite{CF1} in the special case when $\alpha_1=0$ and in \cite{CF2} in the case when $\alpha_2=0$. }
\end{remark}

\section{Appendix: Proof of Theorem \ref{uniform}}\label{app}

\subsection{Proof of Theorem \ref{uniform}}\label{secthm10}

\begin{definition} (See {\rm \cite{CF1}})
We say that a finite 2-complex $X$ is tight if for any proper subcomplex $X'\subset X$, $X'\not= X$, one has
$I(X') > I(X).$
\end{definition}
Clearly, one has
\begin{eqnarray}\label{ineq}
I(X) \ge \min \{I(Y) ; \mbox{\ $Y\subset X$ is a tight subcomplex}\}.
\end{eqnarray}
 By (\ref{ineq}) it is obvious that it is enough to prove Theorem \ref{uniform} under the additional assumption that $X$ is tight.

\begin{remark}\label{rmrk} {\rm Suppose that $X$ is pure and tight and suppose that $\gamma: S^1\to X$ is a simplicial loop with the ratio
$|\gamma|\cdot A_X(\gamma)^{-1}$ less than the minimum of the numbers $I(X')$ where $X'\subset X$ is a proper subcomplex. Let $b: D^2\to X$ be a minimal spanning disc for $\gamma$; then $b(D^2)=X,$ i.e. $b$ is surjective. Indeed, if the image of $b$ does not contain a 2-simplex $\sigma$ then removing
it we obtain a subcomplex $X'\subset X$ with $A_{X'}(\gamma)=A_X(\gamma)$ and hence $I(X') \le I(X)\le |\gamma|\cdot A_X(\gamma)^{-1}$ contradicting the assumption on $\gamma$. }
\end{remark}

\begin{lemma}\label{lm13} If $X$ is a admissible tight complex then $b_2(X)=0$.
\end{lemma}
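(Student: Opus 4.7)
The plan is to argue by contradiction. Suppose $X$ is admissible and tight but $b_2(X)\ge 1$; I will produce a proper subcomplex $X'\subsetneq X$ with the same 1-skeleton and the same fundamental group as $X$, and observe that this automatically forces $I(X')\le I(X)$, contradicting tightness.

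First, extract a removable 2-simplex. Since $b_2(X)\ge 1$, the complex $X$ contains a minimal cycle $Z\subset X$. Admissibility of $Z$ is inherited from $X$: any subcomplex $T\subset Z$ is also a subcomplex of $X$, so the defining inequalities of Definition \ref{eps} hold for $T$ with the same pair $(\alpha_1,\alpha_2)$. Hence $Z$ is an admissible minimal cycle, of type A or of type B, and Lemma \ref{typeA} or Lemma \ref{typeB} (taking $\sigma$ inside the core $Z_0$ in the type B case) supplies a 2-simplex $\sigma\subset Z$ whose boundary $\partial\sigma$ is null-homotopic in $Z\setminus \mathrm{int}(\sigma)$.

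Next, compare $I(X)$ and $I(X')$, where $X':=X\setminus\{\sigma\}$ is obtained from $X$ by deleting the single 2-simplex $\sigma$. Then $X'\subsetneq X$ is a proper subcomplex sharing the 1-skeleton of $X$, and since $\partial\sigma$ is null-homotopic in $X'\supset Z\setminus \mathrm{int}(\sigma)$, the inclusion $X'\hookrightarrow X$ induces an isomorphism $\pi_1(X')\xrightarrow{\sim}\pi_1(X)$. Consequently a simplicial loop in the common 1-skeleton is null-homotopic in $X'$ if and only if it is null-homotopic in $X$, so the infima defining $I(X)$ and $I(X')$ run over exactly the same family of loops. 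For any such $\gamma$, every simplicial filling of $\gamma$ inside $X'$ is automatically a filling inside $X$, hence $A_X(\gamma)\le A_{X'}(\gamma)$ and therefore $|\gamma|/A_{X'}(\gamma)\le |\gamma|/A_X(\gamma)$. Taking infima gives $I(X')\le I(X)$, contradicting the tightness assumption $I(X')>I(X)$.

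The heart of the argument is the first step: producing a removable 2-simplex inside the minimal cycle $Z$. This rests entirely on the structural Lemmas \ref{typeA} and \ref{typeB}; once $\sigma$ is in hand, the comparison $I(X')\le I(X)$ is essentially tautological, since deleting a 2-simplex whose boundary is already null-homotopic in the complement merely shrinks the set of candidate fillings for each loop, without altering either the set of loops or their null-homotopy classes. I therefore expect no further obstacles beyond invoking these earlier structural lemmas.
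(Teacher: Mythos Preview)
Your proof is correct and follows essentially the same route as the paper: extract an admissible minimal cycle $Z\subset X$, invoke Lemmas \ref{typeA} and \ref{typeB} to find a $2$-simplex $\sigma$ with $\partial\sigma$ null-homotopic in $Z\setminus\mathrm{int}(\sigma)$, and then observe that $X'=X\setminus\{\sigma\}$ has the same $1$-skeleton and the same class of null-homotopic loops as $X$, whence $A_X(\gamma)\le A_{X'}(\gamma)$ forces $I(X')\le I(X)$, contradicting tightness. Your added remarks on why $Z$ inherits admissibility and why $\pi_1(X')\to\pi_1(X)$ is an isomorphism are correct elaborations of points the paper leaves implicit.
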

\begin{proof} Assume that $b_2(X)\not=0$. Then there exists a admissible minimal cycle $Z\subset X$. Hence, by Lemmas \ref{typeA} and \ref{typeB} we may find a 2-simplex $\sigma\subset Z\subset X$
such that $\partial \sigma$ is null-homotopic in $Z-\sigma\subset X-\sigma=X'$. Note that $X'^{(1)}=X^{(1)}$ and a simplicial curve $\gamma: S^1\to X'$ is null-homotopic in $X'$ if and only if it is null-homotopic in $X$. Besides, $A_X(\gamma) \le A_{X'}(\gamma)$ and hence
$$\frac{|\gamma|}{A_X(\gamma)}\ge \frac{|\gamma|}{A_{X'}(\gamma)},$$
which implies that $I(X) \ge I(X')$. We obtained a contradiction since $X$ is tight.
\end{proof}

\begin{lemma}\label{lneg} Given $\epsilon >0$ there exists a constant $C'_\epsilon>0$ such that for every finite pure tight connected $\epsilon$-admissible complex $X$ satisfying $L(X) \le 0$ one has $I(X) \ge C'_\epsilon$.
\end{lemma}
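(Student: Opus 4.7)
The plan is to derive from the hypotheses an \emph{a priori} bound on $f_0(X)$ that depends only on $\epsilon$, and then reduce to a finiteness argument using the already-established classification of admissible $2$-complexes. The tightness and $L(X)\le 0$ assumptions enter only through this bound.

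First, Lemma \ref{lm13} tells us $b_2(X)=0$; together with connectivity this gives $\chi(X)=1-b_1(X)\le 1$. Fix admissibility exponents $\alpha_1,\alpha_2\ge 0$ with $\beta:=3\alpha_1+2\alpha_2>1+\epsilon$ such that $\alpha_1 f_1(T)+\alpha_2 f_2(T)<f_0(T)$ for every nonempty $T\subset X$. A proportional rescaling $(\alpha_1,\alpha_2)\mapsto c(\alpha_1,\alpha_2)$ with $c\in(0,1]$ preserves each of these strict inequalities simultaneously, so I may arrange that $\beta\le 2$, and in particular $\alpha_1+\alpha_2\le \beta\le 2$. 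Substituting the Euler-type identities
$$f_1(X)=3f_0(X)-3\chi(X)-L(X),\qquad f_2(X)=2f_0(X)-2\chi(X)-L(X)$$
into the admissibility inequality at $T=X$ and rearranging yields
$$(\beta-1)\,f_0(X)<\beta\,\chi(X)+(\alpha_1+\alpha_2)\,L(X)\le \beta,$$
where the last step uses $\chi(X)\le 1$, $L(X)\le 0$, and $\alpha_1+\alpha_2\ge 0$. Since $\beta-1>\epsilon$, this gives $f_0(X)<2/\epsilon$.

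Hence $X$ belongs to a finite family $\mathcal F_\epsilon$ of isomorphism classes of finite connected pure $\epsilon$-admissible 2-complexes on fewer than $2/\epsilon$ vertices. By Corollary \ref{core}, the fundamental group of any admissible 2-complex is a free product of copies of $\Z$ and $\Z_2$ and is therefore hyperbolic; for a finite 2-complex this is equivalent to $I(X)>0$. Setting
$$C'_\epsilon:=\min_{X\in\mathcal F_\epsilon} I(X)>0$$
(a minimum over a finite set of positive numbers) then yields the desired uniform lower bound.

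The main obstacle is the combinatorial bound $f_0(X)<2/\epsilon$: all the topological content of the lemma is packaged into Lemma \ref{lm13} (ruling out $2$-dimensional homology via tightness) and Corollary \ref{core} (giving hyperbolic $\pi_1$ from admissibility), and the remaining work is essentially a single Euler-characteristic manipulation that exploits $L(X)\le 0$ to discard the error term $(\alpha_1+\alpha_2)L(X)$. The only subtlety is that the rescaling step must be carried out uniformly over all subcomplexes of $X$, which is automatic since multiplying $\alpha_i$ by $c\le 1$ only shrinks the left-hand side of every admissibility inequality.
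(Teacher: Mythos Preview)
Your proof is correct and follows essentially the same strategy as the paper: use Lemma~\ref{lm13} to get $\chi(X)\le 1$, combine this with the $\epsilon$-admissibility inequality and $L(X)\le 0$ via the Euler-type identities to bound the size of $X$ in terms of $\epsilon$ alone, and then take the minimum of $I(X)$ over the resulting finite family, invoking hyperbolicity of $\pi_1(X)$ (the paper cites Theorem~\ref{wedge}, you cite its Corollary~\ref{core}). The paper packages the admissibility step through the dichotomy of Lemma~\ref{mu1 or mu2} and bounds $f_1(X)$ (or $f_2(X)$), while you work directly with a witnessing pair $(\alpha_1,\alpha_2)$ and bound $f_0(X)$; these are minor algebraic variants of the same argument. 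One small remark: your rescaling step to force $\beta\le 2$ is unnecessary (from $(\beta-1)f_0(X)<\beta$ and $\beta>1+\epsilon$ you get $f_0(X)<\beta/(\beta-1)<1+1/\epsilon$ directly) and, as written, tacitly assumes $\epsilon<1$.
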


This lemma is similar to Theorem \ref{uniform} but it has an additional assumption that $L(X) \le 0$. The assumption $L(X)\le 0$ can be replaced, without altering the proof, by any assumption of the type $L(X) \le 1000$, i.e. by any specific upper bound.

%

\begin{proof}[Proof of Lemma \ref{lneg}] We show that the number of isomorphism types of complexes $X$ satisfying the conditions of the lemma is finite; hence the proof follows  by setting $C'_\epsilon=\min I(X)$ and using Theorem \ref{wedge} which gives $I(X)>0$ (since $\pi_1(X)$ is hyperbolic) and hence $C'_\epsilon>0$.

By Lemma \ref{mu1 or mu2} we obtain
$$\mu_1(X')\geq \frac{1+\epsilon}{3}\quad \rm{or}\quad \mu_2(X')\geq \frac{1+\epsilon}{2}.$$
The inequality
$$\mu_1(X) = \frac{1}{3} +\frac{3\chi(X)+L(X)}{3f_1(X)} \ge \frac{1 + \epsilon}{3}$$
is equivalent to
$$f_1(X) \le \epsilon^{-1}\cdot (3\chi(X) +L(X)),$$
where $f_1(X)$ denotes the number of 1-simplexes in $X$.
By Lemma \ref{lm13} we have $\chi(X) =1-b_1(X) \le 1$ and using the assumption $L(X) \le 0$ we obtain
$f_1(X) \le \epsilon^{-1}.$
This implies the finiteness of the set of possible isomorphism types of $X$ and the result follows.

The case $\mu_2(X)\geq (1+\epsilon)/2$ is analogous.
\end{proof}

We will use a relative isoperimetric constant $I(X, X')\in \R$ for a pair consisting of a finite 2-complex
$X$ and its subcomplex $X'\subset X$; it is defined as the infimum of all ratios
$
{|\gamma|}\cdot{A_X(\gamma)}^{-1}$
where $\gamma: S^1\to X'$ runs over simplicial loops in $X'$ which are null-homotopic in $X$.
Clearly, $I(X,X')\ge I(X)$ and $I(X, X')=I(X)$ if $X'=X$.
Below is a useful strengthening of Lemma \ref{lneg}.

\begin{lemma}\label{lnegs} Given $\epsilon >0$, let $C'_\epsilon>0$ be the constant given by Lemma \ref{lneg}. Then for any finite pure, tight, $\epsilon$-admissible and connected 2-complex and for a connected subcomplex $X'\subset X$ satisfying $L(X') \le 0$ one has $I(X,X') \ge C'_\epsilon$.
\end{lemma}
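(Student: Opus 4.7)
The plan is to reduce Lemma \ref{lnegs} to Lemma \ref{lneg}. Let $\gamma: S^1 \to X'$ be a simplicial loop null-homotopic in $X$ and let $b: V \to X$ be a minimal simplicial spanning disc for $\gamma$, so that $A_X(\gamma) = f_2(V)$. I would consider the subcomplex $Y = b(V) \cup X' \subset X$. Since $\gamma \subset X' \subset Y$ is null-homotopic in $Y$ via the same map $b$, one has $A_Y(\gamma) = A_X(\gamma)$, and it suffices to show $|\gamma|/A_Y(\gamma) \ge C'_\epsilon$.

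The central step is to verify that $Y$ satisfies the hypotheses of Lemma \ref{lneg}. It is \emph{connected} because $X'$ is and because $b(V)$ meets $X'$ along $\gamma$. It is \emph{pure} (after first replacing $X'$ by its pure part, which preserves both $L(X') \le 0$ and the fact that $\gamma$ sits in the $1$-skeleton). It is \emph{$\epsilon$-admissible} as a subcomplex of the $\epsilon$-admissible complex $X$. The key computation is that $L(Y) \le 0$: I would split
$$L(Y) = \sum_{e \in X'} (2 - \deg_Y e) + \sum_{e \in b(V) \setminus X'}(2 - \deg_Y e).$$
For $e \in X'$ one has $\deg_Y e \ge \deg_{X'} e$, so the first sum is at most $L(X') \le 0$. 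For $e \in b(V) \setminus X'$, the edge $e$ must be the image of interior edges of $V$ (boundary edges of $V$ map into $\gamma \subset X'$), and by minimality of $b$ the two $2$-simplices of $V$ incident to such an interior edge have distinct images in $X$; hence $\deg_Y e \ge \deg_{b(V)} e \ge 2$, so the second sum is $\le 0$.

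Finally, to conclude $I(Y) \ge C'_\epsilon$, I would appeal to the inequality (\ref{ineq}) and pass to a tight pure connected subcomplex $W \subset Y$, which preserves $\epsilon$-admissibility and connectedness, and then invoke Lemma \ref{lneg}. The main obstacle is that $L$ can increase by $3$ for each $2$-simplex discarded in passing to a tight subcomplex, so $L(W) \le 0$ is not automatic; the correct way to handle this is to choose $Y$ minimal among subcomplexes of $X$ containing $X'$ in which $\gamma$ admits a filling of area $A_X(\gamma)$, which forces a form of relative tightness and allows the argument of Lemma \ref{lneg} to be carried out essentially verbatim for $Y$, producing the same constant $C'_\epsilon$.
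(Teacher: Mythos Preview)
Your overall strategy matches the paper's: form $Y = X' \cup b(V)$ from a minimal filling $b$, show $L(Y) \le 0$, and conclude $I(Y) \ge C'_\epsilon$. Your computation of $L(Y) \le 0$ is correct and essentially identical to the paper's (the paper invokes Ronan's no-folding argument where you invoke minimality of $b$; these amount to the same thing).

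The gap is in the last step. You correctly identify that Lemma~\ref{lneg} requires tightness and that passing to a tight subcomplex $W \subset Y$ can raise $L$. But your proposed fix---choosing $Y$ minimal to obtain a ``relative tightness''---does not supply what the proof of Lemma~\ref{lneg} actually needs. Inspecting that proof, tightness is used only to invoke Lemma~\ref{lm13} and conclude $b_2 = 0$; combined with $L \le 0$ this gives $\chi \le 1$, and then the finiteness bound on $f_1$ (or $f_2$) follows. The paper bypasses tightness of $Y$ entirely by observing that $b_2(Y) \le b_2(X) = 0$, since $X$ itself is tight and admissible (Lemma~\ref{lm13}). With $b_2(Y) = 0$, $L(Y) \le 0$, $Y$ connected and $\epsilon$-admissible, the finiteness argument of Lemma~\ref{lneg} runs verbatim for $Y$ and yields $I(Y) \ge C'_\epsilon$---no tight subcomplex is ever needed.

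A smaller issue: your suggestion to replace $X'$ by its pure part is unsafe, since $\gamma$ may traverse degree-$0$ edges of $X'$ and would then leave the pure part. Fortunately pureness of $Y$ plays no role in the finiteness argument above, so you can simply drop this step.
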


\begin{proof}
We show below that under the assumptions on $X$, $X'$ one has
\begin{eqnarray} \label{y}
I(X,X') \ge \min_Y I(Y)
\end{eqnarray}
where $Y$ runs over all subcomplexes $X'\subset Y\subset X$ satisfying $L(Y)\le 0$. Clearly, any such $Y$ is $\epsilon$-admissible.
 By Lemma \ref{lm13} we have that $b_2(X)=0$ which implies that $b_2(Y)=0$. Besides, without loss of generality we may assume that
$Y$ is connected. The arguments of the proof of Lemma \ref{lneg} now apply (i.e. $Y$ may have finitely many isomorphism types, each having a hyperbolic fundamental group) and it follows that $\min_Y I(Y)\ge C'_\epsilon$ where $C'_\epsilon>0$ is a constant that only depends on $\epsilon$. Hence if (\ref{y}) holds we have $I(X,X')\ge \min_Y I(Y)\ge C'_\epsilon$ and the result follows.

Suppose that inequality (\ref{y}) is false, i.e. $I(X,X') < \min_Y I(Y)$, and consider a simplicial loop $\gamma:S^1\to X'$ satisfying $\gamma\sim 1$ in $X$ and
$|\gamma|\cdot A_X(\gamma)^{-1} <\min_Y I(Y).$ Let $\psi: D^2\to X$ be a simplicial spanning disc of minimal area.
It follows from the arguments of Ronan \cite{Ron}, that $\psi$ is non-degenerate in the following sense: for any 2-simplex $\sigma$ of $D^2$ the image $\psi(\sigma)$ is a 2-simplex
and for two distinct 2-simplexes $\sigma_1, \sigma_2$ of $D^2$ with $\psi(\sigma_1)=\psi(\sigma_2)$ the intersection $\sigma_1\cap \sigma_2$ is either $\emptyset$ or a vertex of $D^2$. In other words, we exclude {\it foldings}, i.e. situations such that $\psi(\sigma_1)=\psi(\sigma_2)$ and $\sigma_1\cap \sigma_2$ is an edge.
Consider $Z=X'\cup \psi(D^2)$. Note that $L(Z)\le 0$. Indeed, since $$L(Z)=\sum_e (2-\deg_Z(e)),$$ where $e$ runs over the edges of $Z$, we see that for $e\subset X'$,
$\deg_{X'}(e)\le \deg_Z(e)$ and for a newly created edge $e\subset \psi(D^2)$, clearly $\deg_Z(e)\ge 2$. Hence, $L(Z)\le L(X')\le 0$.
On the other hand, $A_X(\gamma)=A_Z(\gamma)$ and hence $I(Z)\le |\gamma|\cdot A_X(\gamma)^{-1}<\min_Y I(Y)$, a contradiction.
\end{proof}

The main idea of the proof of Theorem \ref{uniform} in the general case is to find a planar complex (a \lq\lq singular  surface\rq\rq)\ $\Sigma$, with one boundary component $\partial_+\Sigma$ being the initial loop and such that \lq\lq the rest of the boundary\rq\rq\, $\partial_-\Sigma$ is a \lq\lq product of negative loops\rq\rq\,  (i.e. loops satisfying Lemma \ref{lnegs}). The essential part of the proof is in estimating the area (the number of 2-simplexes) of such $\Sigma$.

\begin{proof}[Proof of Theorem \ref{uniform}]
Consider a connected tight pure $\epsilon$-admissible 2-complex $X$
and a simplicial prime loop $\gamma: S^1 \to X$ such that the ratio
$|\gamma|\cdot A_X(\gamma)^{-1}$ is less than the minimum of the numbers $I(X')$ for all proper subcomplexes $X'\subset X$. Consider a minimal spanning disc
$b: D^2\to X$ for $\gamma=b|_{\partial D^2}$; here $D^2$ is a triangulated disc and $b$ is a simplicial map. As we showed in Remark \ref{rmrk}, the map $b$ is surjective.
As explained in the proof of Lemma \ref{lnegs}, due to arguments of Ronan \cite{Ron},
we may assume that $b$ has no foldings.

For any integer $i\ge 1$ we denote by $X_i\subset X$ the pure subcomplex generated by all 2-simplexes $\sigma$ of $X$ such that the preimage $b^{-1}(\sigma)\subset D^2$ contains $\ge i$ two-dimensional simplexes. One has $X=X_1\supset X_2\supset X_3\supset \dots.$ Each $X_i$ may have several connected components and we will denote by $\Lambda$ the set labelling all the connected components of the disjoint union $\sqcup_{i\ge 1} X_i$. For $\lambda\in \Lambda$ the symbol $X_\lambda$ will denote the corresponding connected component of $\sqcup_{i\ge 1} X_i$
and the symbol
$i=i(\lambda)\in \{1, 2, \dots\}$ will denote the index $i\ge 1$ such that $X_\lambda$ is a connected component of $X_i$, viewed as a subset of $\sqcup_{i\ge 1} X_i$. We endow $\Lambda$ with the following partial order: $\lambda_1\le \lambda_2$ iff $X_{\lambda_1}\supset X_{\lambda_2}$ (where $X_{\lambda_1}$ and $X_{\lambda_2}$ are viewed as subsets of $X$) and $i(\lambda_1)\le i(\lambda_2)$.

Next we define the sets
$$\Lambda^-=\{\lambda\in \Lambda; L(X_\lambda)\le 0\}$$
and
$$ \Lambda^+=\{\lambda\in \Lambda; \mbox{for any $\mu\in \Lambda$ with $\mu\le \lambda$, }\, L(X_\mu)> 0\}.$$
Finally we consider the following subcomplex  of the disk $D^2$:
\begin{eqnarray}\label{defsigma}
\Sigma' =D^2-\bigcup_{\lambda\in \Lambda^-}{\rm {Int}}(b^{-1}(X_\lambda))\end{eqnarray}
and we shall denote by $\Sigma$ the connected component of $\Sigma'$ containing the boundary circle $\partial D^2$.

Recall that for a 2-complex $X$ the symbol $f_2(X)$ denotes the number of 2-simplexes in $X$. We have
\begin{eqnarray}\label{one1}
f_2(D^2) = \sum_{\lambda\in \Lambda} f_2(X_\lambda),
\end{eqnarray}
and
\begin{eqnarray}\label{two2}
f_2(\Sigma) \le f_2(\Sigma') = \sum_{\lambda\in \Lambda^+} f_2(X_\lambda).
\end{eqnarray}
Formula (\ref{one1}) follows from the observation that any 2-simplex of $X=b(D^2)$ contributes to the RHS of (\ref{one1}) as many units as its multiplicity (the number of its preimages under $b$). Formula (\ref{two2}) follows from (\ref{one1}) and from the fact that for a 2-simplex $\sigma$ of $\Sigma$ the image $b(\sigma)$ lies always in the complexes $X_\lambda$ with $L(X_\lambda)> 0$.

\begin{lemma} \label{lpartial} One has the following inequality
\begin{eqnarray}\label{llb}
\sum_{\lambda\in \Lambda^+}L(X_\lambda)\le |\partial D^2|.
\end{eqnarray}
\end{lemma}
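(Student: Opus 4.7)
The plan is a direct double-counting argument relating the sum in question to preimage multiplicities under the simplicial map $b:D^2\to X$.

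First I would introduce the preimage counts: for every edge $e$ of $X$, set $m(e)=|b^{-1}(e)|$ and $m_\partial(e)=|b^{-1}(e)\cap\partial D^2|$; for every $2$-simplex $\sigma$ of $X$, set $m(\sigma)=|b^{-1}(\sigma)|$ and $M(e)=\max_{\sigma\supset e}m(\sigma)$. Because $b$ has no foldings, each preimage edge of $e$ has degree $2$ in $D^2$ if interior and $1$ if on $\partial D^2$, and each preimage $2$-simplex of any $\sigma\supset e$ contains exactly one edge over $e$. Double-counting pairs $(\tau,\tilde e)$ with $b(\tau)=\sigma\supset e$, $b(\tilde e)=e$, and $\tilde e\subset\tau$ yields the key identity
$$\sum_{\sigma\supset e}m(\sigma)=2m(e)-m_\partial(e).\qquad(*)$$

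Next I would exploit the chain structure. For each edge $e$ the set $\{\lambda\in\Lambda:e\in X_\lambda\}$ is the chain $\mu_1(e)\le\cdots\le\mu_{M(e)}(e)$, with $\deg_{X_{\mu_i(e)}}(e)=d_i(e)=|\{\sigma\supset e:m(\sigma)\ge i\}|$. Downward closure of $\Lambda^+$ in the partial order supplies an index $k(e)\in\{0,\dots,M(e)\}$ such that $\mu_i(e)\in\Lambda^+$ iff $i\le k(e)$. A short chain-theoretic argument—using the fact that $\mu_i(\sigma)=\mu_i(e)$ for every $e\subset\sigma$ and $i\le m(\sigma)$—shows that the number $\ell(\sigma)$ of chain indices of $\sigma$ lying in $\Lambda^+$ equals $\min(m(\sigma),k(e))$ and is therefore independent of the choice $e\subset\sigma$. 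Swapping summation, together with the Euler identity $|\partial D^2|=2\sum_e m(e)-3\sum_\sigma m(\sigma)$, reduces the lemma to the non-negativity of
$$\sum_e\Bigl[2(m(e)-k(e))-\sum_{\sigma\supset e}\max(0,m(\sigma)-k(e))\Bigr].$$

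I would finish with a per-edge estimate. Writing $n_+(e)=|\{\sigma\supset e:m(\sigma)>k(e)\}|$ and substituting $(*)$ rewrites the bracket as
$$m_\partial(e)+\sum_{\sigma\supset e,\ m(\sigma)\le k(e)}m(\sigma)+k(e)(n_+(e)-2),$$
which is manifestly non-negative when $n_+(e)\ge 2$. When $n_+(e)\le 1$, denoting by $M_1$ the unique large multiplicity (or $0$) and again applying $(*)$, the bracket collapses to $2m(e)-M_1-k(e)$ (respectively $2(m(e)-M(e))$); both are non-negative because the no-foldings hypothesis forces $m(e)\ge M(e)\ge M_1\ge k(e)+1$. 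The main obstacle I expect is the chain-level bookkeeping needed to establish the edge-independence of $\ell(\sigma)$ (which is what makes the interchange of summation work); once that identity is verified, the remaining arithmetic is routine.
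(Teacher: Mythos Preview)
Your argument is correct. The paper itself does not prove this lemma here; it only writes ``See \cite{CF1}, Lemma 6.8 for the proof,'' so there is no in-paper argument to compare against. Your double-counting proof is self-contained and valid: the identity $(*)$ follows exactly as you say from non-degeneracy and the absence of foldings (each preimage triangle of $\sigma\supset e$ contributes a distinct preimage edge of $e$, giving $m(e)\ge M(e)$ as well); downward closure of $\Lambda^+$ indeed forces the $\Lambda^+$-indices in each chain $\mu_1(e)\le\cdots\le\mu_{M(e)}(e)$ to form an initial segment; and the edge-independence of $\ell(\sigma)=\min(m(\sigma),k(e))$ holds because the chain of $\sigma$ coincides with the first $m(\sigma)$ terms of the chain of any of its edges. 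After that, the reduction via $L(X_\lambda)=2f_1(X_\lambda)-3f_2(X_\lambda)$ and the disc identity $|\partial D^2|=2f_1(D^2)-3f_2(D^2)$ is straightforward, and your per-edge case analysis for $n_+(e)\ge 2$, $n_+(e)=1$, $n_+(e)=0$ is clean (in the last case $k(e)=M(e)$ is forced, and the bracket equals $2(m(e)-M(e))\ge 0$). The bookkeeping you flagged as the main obstacle is in fact routine once one notes that $\mu_i(\sigma)=\mu_i(e)$ for $i\le m(\sigma)$, so there is no hidden difficulty.
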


See \cite{CF1}, Lemma 6.8 for the proof.

Now we continue with the proof of Theorem \ref{uniform}.
Consider a tight $\epsilon$-admissible pure 2-complex $X$ and a simplicial loop $\gamma: S^1\to X$ as above.
We will use the notation introduced earlier. The complex $\Sigma$ is a connected subcomplex of the disk $D^2$; it contains the boundary circle $\partial D^2$
which we will denote also by
$\partial_+\Sigma$. The closure of the complement of $\Sigma$,
$$N=\overline{D^2-\Sigma}\subset D^2$$ is a pure 2-complex. Let $N=\cup_{j\in J}N_j$ be the strongly connected components of $N$.
Each $N_j$ is PL-homeomorphic to a disc and we define
$$\partial_-\Sigma=\cup_{j\in J}\partial N_j,$$ the union of the circles $\partial N_j$ which are the boundaries of the strongly connected components of $N$.
It may happen that $\partial_+\Sigma$ and $\partial_-\Sigma$ have nonempty intersection. Also, the circles forming $\partial_-\Sigma$ may not be disjoint.

We claim that for any $j\in J$ there exists $\lambda\in \Lambda^-$ such that $b(\partial N_j)\subset X_\lambda$.
Indeed, let $\lambda_1, \dots, \lambda_r\in \Lambda^-$ be the minimal elements of $\Lambda^-$ with respect to the partial order introduced earlier. The complexes
$X_{\lambda_1}, \dots, X_{\lambda_r}$ are connected and pairwise disjoint and for any $\lambda\in \Lambda^-$ the complex $X_\lambda$ is a subcomplex
of one of the
sets $X_{\lambda_i}$, where $i=1, \dots, i$. From our definition (\ref{defsigma}) it follows that the image of the circle $b(\partial N_j)$ is contained in the union
$\cup_{i=1}^r X_{\lambda_i}$ but since $b(\partial N_j)$ is connected it must lie in one of the sets $X_{\lambda_i}$.

We may apply Lemma \ref{lnegs} to each of the circles $\partial N_j$. We obtain that each of the circles $\partial N_j$ admits a spanning discs of area
$\le K_\epsilon |\partial N_j|$, where $K_\epsilon= C'^{-1}_\epsilon$ is the inverse of the constant given by Lemma \ref{lnegs}.
Using the minimality of the disc $D^2$ we obtain that the circles $\partial N$ bound in $D^2$ several discs with
the total area
$A \le K_\epsilon\cdot |\partial_-\Sigma|.$

For $\lambda\in \Lambda^+$ one has $L(X_\lambda)\ge 1$ and $\chi(X_\lambda)\le 1$ (since $b_2(X_\lambda)=0$); in particular, $f_1(X_\lambda)\geq f_2(X_\lambda)$. By Lemma \ref{mu1 or mu2} either 
$$3\chi(X_\lambda) +L(X_\lambda) \ge \epsilon f_1(X_\lambda), \quad \mbox{or}\quad 2\chi(X_\lambda) +L(X_\lambda) \ge \epsilon f_2(X_\lambda).$$
Hence we have either
$$4L(X_\lambda) \ge 3\chi(X_\lambda) + L(X_\lambda) \ge \epsilon f_1(X_\lambda)\ge \epsilon f_2(X_\lambda) $$
or
$$3L(X_\lambda) \ge 2\chi(X_\lambda) + L(X_\lambda) \ge \epsilon f_2(X_\lambda).$$
Since $L(X_\lambda)\ge 1$ both cases imply
$$f_2(X_\lambda)\le \frac{3}{\epsilon} L(X_\lambda).$$
Summing up we get
$$f_2(\Sigma)\le  \sum_{\lambda\in \Lambda^+} f_2(X_{\lambda}) \le \frac{3}{\epsilon}\sum_{\lambda\in \Lambda^+}L(X_\lambda) \le
 \frac{3}{\epsilon}
 |\partial D^2|.$$
The rightmost inequality  is given by Lemma \ref{lpartial}.

Next we observe, that
\begin{eqnarray}
|\partial_-\Sigma| \le 2f_2(\Sigma) +|\partial_+\Sigma|.
\end{eqnarray}
Therefore, we obtain
\begin{eqnarray*}
f_2(D^2) &\le& f_2(\Sigma)+A \, \le \, \frac{3}{\epsilon} |\gamma| + K_\epsilon\cdot 2\cdot f_2(\Sigma) + K_\epsilon |\gamma| \\
&\le &
\left(\frac{3}{\epsilon}(1+2K_\epsilon) +K_\epsilon\right)\cdot |\gamma|,
\end{eqnarray*}
implying
\begin{eqnarray}
I(X) \ge \frac{\epsilon}{3+6K_\epsilon+\epsilon K_\epsilon}.
\end{eqnarray}
This completes the proof of Theorem \ref{uniform}.
\end{proof}

\bibliographystyle{amsalpha}

\vskip 0.3cm
A. Costa and M. Farber: 
\vskip 0.3cm
School of Mathematical Sciences, Queen Mary University of London, London E1 4NS
\vskip 0.3cm
 Email addresses:     \hskip 1cm A.Costa@qmul.ac.uk, \hskip 1cm M.Farber@qmul.ac.uk
\end{document}